\definecolor{myred}{rgb}{0.75,0,0}
\definecolor{mygreen}{rgb}{0,0.5,0}
\definecolor{myblue}{rgb}{0,0,0.65}
\newtheorem{thm}{Theorem}[section]
\newtheorem{trm}[thm]{Theorem}
\newtheorem{cor}[thm]{Corollary}
\newtheorem{prop}[thm]{Proposition}
\newtheorem{lem}[thm]{Lemma}
\newtheorem{lma}[thm]{Lemma}
\theoremstyle{definition}
\newtheorem{defn}[thm]{Definition}
\newtheorem{defi}[thm]{Definition}
\newtheorem{ex}[thm]{Example}
\theoremstyle{remark}
\newtheorem{rem}[thm]{Remark}
\let\c@equation\c@thm
\numberwithin{equation}{section}
\let\c@figure\c@thm
\numberwithin{figure}{section}
\newcommand{\C}{\mathbb{C}}
\newcommand{\Q}{\mathbb{Q}}
\newcommand{\Z}{\mathbb{Z}}
\newcommand{\N}{\mathbb{N}}
\newcommand{\f}{\textbf}
\newcommand{\mr}{\mathrm}
\newcommand{\mi}{\mathit}
\newcommand{\mb}{\mathbf}
\newcommand{\bdim}{\mathbf{dim}\,}
\newcommand{\fa}{ \ \mathrm{for}\ \mathrm{all}\ }
\newcommand{\Hom}{\mathrm{Hom}}
\title[Permutation actions on Quiver Grassmannians and GKM-Theory]{Permutation actions on Quiver Grassmannians for the equioriented cycle via GKM-Theory}
\author{Martina Lanini and Alexander P\"{u}tz}
\begin{document}

\begin{abstract}In previous work we equipped quiver Grassmannians for nilpotent representations of the equioriented cycle with an action of an algebraic torus. We show here that the equivariant cohomology ring is acted upon by a product of symmetric groups and we investigate this permutation action via GKM techniques. In the case of (type A) flag varieties, or Schubert varieties therein, we recover Tymoczko's results on permutation representations.
\end{abstract}

\maketitle
\section*{Introduction}Realising a representation by geometric methods is in general very convenient, given the fruitful interplay between geometry and representation theory. For example, a geometric action of an algebraic group can induce an action of the corresponding Weyl group on cohomology, as in the flag variety case \cite{Tymoczko2008}. Nevertheless, there are cases in which the cohomology is equipped with a Weyl group action, which does not come from a geometric action on the variety, as for the Springer fibres \cite{Springer} or the Schubert varieties \cite{Tymoczko2008}. Further varieties whose cohomology has a structure of a Weyl group representation are Hessenberg varieties \cite{Tymoczko2008b}. Their investigation is currently a very active research area for combinatorialists, geometers and representation theorists. For instance, in type ${\tt A}$, the  permutation representation on Hessenberg varieties for regular semisimple elements is related by the Shareshian-Wachs Conjecture \cite{shareshianwachs}  (proven independently in \cite{BrosnanChow} and \cite{GuayPaquet}) to chromatic quasisymmetric functions of a certain graph.

The main inspiration for the present paper is Tymoczko's work (\cite{Tymoczko2008,Tymoczko2008b}), where symmetric group actions on Schubert varieties are concretely defined and investigated via equivariant localisation. More precisely, if a variety is nicely acted upon by an algebraic torus, its equivarant cohomology is encoded in the one-skeleton (the moment graph) of the action. This approach is referred to as GKM-Theory (after Goresky, Kottwitz and MacPherson \cite{GKM1998}). 

In \cite[Questions 5.8 \& 5.10]{Tymoczko2008b} the author asks: 

{
\begin{center}
    \emph{Can more geometric representations be realised using GKM?}
    
\vspace{2mm}    
    \emph{Can equivariant cohomology be used to construct natural families of twisted group representations?}
    
\end{center}
}
The aim of this paper is to positively answer these questions by showing that quiver Grassmannians for nilpotent representations of the equioriented cycle provides examples of varieties whose cohomology can be described via GKM-theory (in which case, we refer to them as GKM-varieties), and is equipped with an action of a product of appropriate symmetric groups.

In \cite{LaPu2020} we started a programme whose aim was to extend the use of GKM-theory to quiver Grassmannians. More precisely, we dealt with nilpotent representations for the equioriented cycle $\Delta_n$ and showed that the corresponding quiver Grassmannians can be equipped with an action of a torus $T$ which turns them into GKM-varieties. Since the main point of GKM-theory is to translate questions concerning cohomology into graph combinatorics, it was central to obtain a combinatorial description of the moment graph. Such a description relies on the combinatorics of the coefficient quiver.

In the present paper we address the third item of our wish list from \cite[Introduction]{LaPu2020} and take the challenge from \cite[Questions 5.8 \& 5.10]{Tymoczko2008b}: we show indeed that the torus equivariant cohomology of the quiver Grassmannian $\mr{Gr}_{\bf e}(M)$ of a nilpotent $\Delta_n$-representation $M$ admits an action of a product of symmetric groups  coming from a geometric action on the variety (Proposition \ref{propn:GeometricPermutation-Action}). We denote this reflection group by $\mathfrak{S}_{\underline{k}}$ (see \S \ref{sec:Permutation-Action} for the precise definition). In the flag variety case, this is nothing but Tymoczko's action from \cite{Tymoczko2008}, studied earlier via geometric methods by Brion in \cite{Brion2000}. 

In order to decompose the torus equivariant cohomology into irreducible $\mathfrak{S}_{\underline{k}}$-representations, we use an appropriate basis. Indeed, under a homogeneity assumption on $M$ (Definition \ref{def:homog-forest}), the equivariant cohomology of $\mr{Gr}_{\bf e}(M)$ admits a (unique) Knutson-Tao basis (KT-basis) as a module over $H_T^\bullet (\mr{pt})$ (Theorem \ref{trm:homg-forest-have-unique-KT-basis}). Existence and uniqueness of such a basis for equivariant cohomology modules is an interesting question in general, answered positively in  \cite{Tymoczko2008} for Schubert varieties and in \cite{GuilleminZara2003} for a wide class of smooth GKM-varieties. We want to point out that for the existence of the basis constructed in \cite{LaPu2020} we did not require homogeneity, but such a basis is harder to determine in general, given that it relies on the computation of equivariant Euler classes.

Thanks to the good combinatorial control that we have on the moment graph, we can investigate the effect of the permutations on the KT-basis. This is a direct generalisation of the (left) permutation action on (localised) equivariant Schubert classes described in \cite{Tymoczko2008}. We exploit this to deduce our main theorem: in the homogeneous case the permutation representation decomposes as a direct sum of (twisted) $\mathfrak{S}_{\underline{k}}$-trivial representations (see Theorem \ref{thm:mainThmTrivialRepn} for the precise statement).

How the permutation representation decomposes in the non homogeneous case remains an open question, which is certainly worth it to be investigated.

The last section of the paper is devoted to (left) divided difference operators, whose relevance in the classical setting is, for example, discussed in \cite{Tymoczko2009}. We show that they equip the equivariant cohomology with a structure of a graded module over a certain nil Hecke ring (Theorem \ref{thm:NHRingRepn}). 

While the behaviour of our permutation representation was exactly the same as in the classical setting, for the nil Hecke ring ${}^0\mathcal{H}$ the module structure presents new features. For example, the representation is not faithful in general and it is not a cyclic ${}^0\mathcal{H}$-module. It would be certainly interesting to further study the ${}^0\mathcal{H}$-module structure and provide, for example, conditions under which the module is cyclic.

Several questions remain to be addressed. Having proven the existence of a nice $H_T^\bullet(\textrm{pt})$-basis for the equivariant cohomology, it is natural to ask what it is possible to say about structure constants. This is one of the central (still open) problems of Schubert Calculus, so that we do not expect to be able to provide a complete answer. Nevertheless, it is tempting to conjecture that  an analogue of the equivariant Pieri rule exists also in our generality. We have made no effort yet in this direction, but we believe that the machinery we developed here provides useful tools to be applied  in this direction. 

\subsection*{Structure of the paper} Section~\ref{sec:Quiver-Grass} presents background material on Quiver representations, Quiver Grassmannians and $\mathbb{C}^*$-actions on them. In  Section~\ref{sec:GKM-Attractive-Forests}, after recalling basics of GKM-Theory, we focus on the GKM-variety structure of quiver Grassmannians for nilpotent representations of the equioriented cycle and provide a description of their moment graphs. Section~\ref{sec:comb-basis} is about existence and uniqueness of KT-basis for the equivariant cohomology of the above quiver Grassmannians. In Section~\ref{sec:Permutation-Action} we introduce the permutation representation on quiver Grassmannianns for nilpotent $\Delta_n$-representations and describe the effect on the moment graph. We investigate the permutation representation via their action on KT-classes in Section~\ref{sec:perm-action-KT-classes}. Finally, Section~\ref{sec:DividedDifferenceOps} is about divided difference operators and nil Hecke ring action.


\subsection*{Acknowledgements}
 We thank Francesco Esposito for useful discussions, and Arun Ram for helping with references. We acknowledge the PRIN2017 CUP E8419000480006, and the MIUR Excellence Department Project awarded to the Department of Mathematics, University of Rome Tor Vergata, CUP
 E83C18000100006.

\section{Generalities on Quiver Grassmannians}\label{sec:Quiver-Grass}
A \f{quiver} $Q$ consists of a set of vertices $Q_0$ and a set of oriented edges $Q_1$ between the vertices. For every arrow $(a: i \to j) \in Q_1$, let $s_a=i, t_a=j \in Q_0$ denote the source and the target of an arrow, respectively. 
 
2

A $Q$\f{-representation} $M$ is a pair of tuples $(M^{(i)})_{i \in Q_0}$ and $(M_a)_{a \in Q_1}$, where the $M^{(i)}$ are $\C$-vector spaces and each $M_a$ is a linear map from $M^{(s_a)}$ to $M^{(t_a)}$. 
By $V_{Q}$ we denote the $Q$-representation with $V_{Q}^{(i)}=\C$ for all $i \in Q_0$ and $V_{Q,a}= \mr{id}_\C$ for all $a \in Q_1$.
If the sum of all $\dim_\C M^{(i)}$ is finite, the $Q$-representation $M$ is said to be finite-dimensional. By $\mr{rep}_\C(Q)$ we denote the category of finite-dimensional $Q$-representations. The \f{support of} $M$ is the full subquiver of $Q$ parametrised by the vertices $i \in Q_0$ such that $\dim_\C M^{(i)} \neq 0$. 

For two $Q$-representations $M$ and $N$, a $Q$\f{-morphism} $f$ is a tuple of linear maps $f_i : M^{(i)} \to N^{(i)}$ such that $f_{t_a} \circ M_a = N_a \circ f_{s_a} $. The set of all $Q$-morphisms from $M$ to $N$ is denoted by $\Hom_Q(M,N)$. A $Q$-representation $U$ is called \f{subrepresentation} of $M$ if there exists an injective $Q$-morphism from $U$ to $M$. Or equivalently the vector spaces $U^{(i)}$ are subspaces in the $M^{(i)}$ and $M_a U^{(s_a)} \subseteq U^{(t_a)}$ holds for all $a \in Q_1$. The \f{dimension vector} of a $Q$-representation $U$ is defined as
\[
\bdim U := \big( \dim_\C U^{(i)} \big)_{i \in Q_0} \in \Z^{Q_0}.
\]
\begin{defi}\label{def:quiver-grass}
The \f{quiver Grassmannian} $\mr{Gr}_\mb{e}(M)$ is the variety of all subrepresentations $U$ of $M$ whose dimension vector equals $\mb{e}\in \Z^{Q_0}$.
\end{defi}
\begin{rem}\label{rem:dim-vector-of-subrep}
From now on we assume the choice of a dimension vector $\mb{e} \in \Z^{Q_0}$ such that $\mr{Gr}_\mb{e}(M)$ is non-empty. 
\end{rem}

\subsection{$\C^*$-Action on Quiver Representations and Fixed Points of Quiver Grassmannians}\label{subsec:C*-Action}
In this subsection we introduce $\C^*$-actions on quiver Grassmannians and describe the fixed points of this action for the case that the underlying weights are well behaved. A basis $B$ of $M \in \mr{rep}_\C(Q)$ consists of basis 
\[ B^{(i)} =  \big\{ v^{(i)}_k \vert k \in [m_i] \big\} \]
for each vector space $M^{(i)}$ of the $Q$-representation $M$, where $m_i := \dim_\C M^{(i)}$ for all $i \in Q_0$, and $[m]:=\{1,\dots,m\}$.
\begin{defi}
Let $M \in \mr{rep}_\C(Q)$ and $B$ a basis of $M$. The \f{coefficient quiver} $Q(M,B)$ consists of: 
\begin{itemize}
\item[(QM0)] the vertex set  $Q(M,B)_0=B$,
\item[(QM1)] the set of arrows $Q(M,B)_1$, containing $(\tilde{a}: v_k^{(i)} \to v_\ell^{(j)})$ if and only if $(a:i\to j) \in Q_1$ and the coefficient of $v_\ell^{(j)}$ in $M_a v_k^{(i)}$ is non-zero.
\end{itemize}
\end{defi}
\begin{rem}\label{rem:Segments-to-Indec}
Every $M \in \mr{rep}_\C (Q)$ is isomorphic to a direct sum of indecomposable representations and this decomposition is unique up to reordering \cite[Theorem~1.11]{Kirillov2016}. Hence, by definition of coefficient quivers, there exists a basis $B$ such that the connected components of $Q(M,B)$ are in bijection with the indecomposable summands of $M$. This implies that with a basis $B$, which satisfies this assumption, the image of every basis vector $v \in B^{(s_a)}$ under the map $M_a$ for $a \in Q_1$ is either zero or the unique basis vector $v' \in  B^{(t_a)}$. Note that in general there are several bases with this property and the different possibilities to choose this basis will play an important role in Section~\ref{subsec:P-S-moment graph}.
\end{rem}
\begin{defi}
Let $M \in \mr{rep}_\C(Q)$ and $B$ be a basis of $M$. 
\begin{itemize}
\item[(i)] A \f{grading} on $Q(M,B)_0$ is a tuple $\mb{wt} = \mr{wt}(b)_{b \in B} \in \Z^B$. 

\item[(ii)] $M$ is \f{well behaved} if for every arrow $a : i \to j$ of $Q$  and every element $b \in B^{(i)}$ there exists an element $b' \in B^{(j)}$ and $c \in \C$ (possibly zero) such that
\( M_a b = cb', \) 
and there exists a grading on $Q(M,B)_0$ so that:
\begin{itemize}
\item[(D1)] for all $i \in Q_0$ all vectors from $B^{(i)}$ have different degrees;
\item[(D2)] for every arrow $a : i \to j$ of $Q$, whenever $b_1 \neq b_2$ are elements of $B^{(i)}$ such that $M_a b_1 = c_1b_1'$ and $M_a b_2= c_2b_2'$ with $c_1,c_2 \in \C^*$ and $b_1',b_2' \in B^{(j)}$, we have:
\[ \mr{wt}(b_1') -   \mr{wt}(b_2')  =  \mr{wt}(b_1) -  \mr{wt}(b_2).\]
\end{itemize}
\end{itemize}
\end{defi}
\begin{rem}\label{rem:C-action}
A grading on $Q(M,B)_0$ induces a $\C^*$-action on the vector spaces of the $Q$-representation $M$, defined on the basis vectors as 
\begin{equation}\label{eqn:C-action}
     z \cdot b := z^{\mr{wt}(b)}b \quad \mr{for} \ z \in \C^*, \ b \in B.
\end{equation}
If $M$ is well behaved, the action extends to the quiver Grassmannian $\mr{Gr}_\mb{e} (M)$ by \cite[Lemma~1.1]{Cerulli2011}.
\end{rem}
Let $\mr{Gr}_\mb{e}(M)^{\C^*}$ denote the fixed point set of the $\C^*$-action on the quiver Grassmannian. The following theorem was proven by Cerulli Irelli in \cite[Theorem~1]{Cerulli2011}.
\begin{trm}\label{trm:Fixed-Points}
Let $M$ be a well behaved $Q$-representation. Then 
\[ \mr{Gr}_\mb{e}(M)^{\C^*} = \big\{ N \in \mr{Gr}_\mb{e} (M) \  \big\vert \ N^{(i)} \ \mi{is} \ \mi{spanned} \ \mi{by} \ \mi{part} \ \mi{of} \ B^{(i)} \big\}. \]
\end{trm}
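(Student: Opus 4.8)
The plan is to reduce the statement to a vertex-by-vertex question about $\C^*$-stable subspaces and then settle it with a weight-space decomposition argument. The quiver Grassmannian sits as a closed subvariety inside the product of ordinary Grassmannians $\prod_{i \in Q_0} \mr{Gr}(e_i, M^{(i)})$, and the $\C^*$-action from Remark~\ref{rem:C-action} is precisely the restriction of the diagonal action on this product induced by the linear actions on the $M^{(i)}$; in particular the inclusion is $\C^*$-equivariant. Hence a point $N = (N^{(i)})_{i \in Q_0}$ lies in $\mr{Gr}_\mb{e}(M)^{\C^*}$ if and only if each $N^{(i)}$ is a $\C^*$-stable subspace of $M^{(i)}$. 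Since every $N$ appearing on the right-hand side is by hypothesis already an element of $\mr{Gr}_\mb{e}(M)$, i.e.\ an honest subrepresentation, I need only compare the two conditions ``$N$ is fixed'' and ``each $N^{(i)}$ is spanned by part of $B^{(i)}$'', and no compatibility with the maps $M_a$ has to be re-examined.

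First I would record the weight decomposition. The grading $\mb{wt}$ turns each $M^{(i)}$ into a rational $\C^*$-representation $M^{(i)} = \bigoplus_{\lambda} M^{(i)}_\lambda$, where $M^{(i)}_\lambda$ is spanned by those $b \in B^{(i)}$ with $\mr{wt}(b) = \lambda$. The crucial input is condition (D1): within each $B^{(i)}$ the degrees are pairwise distinct, so every weight space $M^{(i)}_\lambda$ is at most one-dimensional, spanned by a single basis vector.

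The inclusion $\supseteq$ is immediate: if $N^{(i)} = \mr{span}(S^{(i)})$ for some subset $S^{(i)} \subseteq B^{(i)}$, then each spanning vector is a $\C^*$-eigenvector, so $z \cdot N^{(i)} = N^{(i)}$ for every $z \in \C^*$ and $N$ is fixed. For the reverse inclusion $\subseteq$, suppose $N$ is fixed, so each $N^{(i)}$ is $\C^*$-stable. I would then invoke the standard fact that a stable subspace equals the sum of its intersections with the weight spaces: given $w \in N^{(i)}$ with homogeneous decomposition $w = \sum_\lambda w_\lambda$, the relation $z \cdot w = \sum_\lambda z^\lambda w_\lambda \in N^{(i)}$ for all $z \in \C^*$ together with the linear independence of the distinct characters $z \mapsto z^\lambda$ (equivalently, inverting a Vandermonde-type system built from finitely many values of $z$) forces each component $w_\lambda$ to lie in $N^{(i)}$. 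Thus $N^{(i)} = \bigoplus_\lambda \big(N^{(i)} \cap M^{(i)}_\lambda\big)$, and by (D1) each summand is either $0$ or the full line $M^{(i)}_\lambda = \C b$. Consequently $N^{(i)}$ is spanned by exactly those $b \in B^{(i)}$ it contains, as required.

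The only genuinely delicate point is the one already absorbed into the hypothesis, namely that the naive diagonal action on $M$ descends to an honest $\C^*$-action on $\mr{Gr}_\mb{e}(M)$—rather than moving a subrepresentation to something that is no longer one—which is where well-behavedness, and in particular condition (D2), enters and is supplied by \cite[Lemma~1.1]{Cerulli2011}. Granting that, the heart of the argument is the linear-algebra dichotomy above, and the indispensable hypothesis is (D1): without pairwise distinct weights on $B^{(i)}$ the stable subspaces would be arbitrary subspaces of the (higher-dimensional) weight spaces rather than coordinate subspaces, and the clean combinatorial description of the fixed locus would break down.
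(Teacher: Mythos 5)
Your proof is correct. Note that the paper itself gives no proof of this statement---it is quoted from Cerulli Irelli \cite[Theorem~1]{Cerulli2011}---and your argument (the $\C^*$-equivariant closed embedding of $\mr{Gr}_\mb{e}(M)$ into $\prod_{i\in Q_0}\mr{Gr}(e_i,M^{(i)})$, reduction to $\C^*$-stable subspaces, and the weight-space decomposition in which (D1) forces every weight space of $M^{(i)}$ to be a line) is essentially the standard one given in that reference, including the correct observation that (D2), via \cite[Lemma~1.1]{Cerulli2011}, is exactly what makes the action on the quiver Grassmannian well defined.
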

\begin{rem}\label{rem:coordinate-subreps}
Since the vector spaces $N^{(i)}$ of the points in $\mr{Gr}_\mb{e}(M)^{\C^*}$ are spanned by subsets of $B^{(i)}$, we refer to them as \f{coordinate subrepresentations}. 
\end{rem}
\begin{rem}
Theorem~\ref{trm:Fixed-Points} implies that the number of fixed points is finite. In this case it equals the Euler characteristic of the quiver Grassmannian \cite[Section~2]{Cerulli2011}. 
\end{rem} 
\begin{defi}\label{def:successor-closed}
A subquiver $L \subset Q(M,B)$ is \f{successor closed} if $L_1$ contains every arrow from $Q(M,B)_1$ starting in a vertex in $L_0 \subset Q(M,B)_0$. 
\end{defi}
The following equivalent formulation of Theorem~\ref{trm:Fixed-Points} is proven in \cite[Proposition~1]{Cerulli2011}. For a successor closed $L \subset Q(M,B)$ we write $L \overrightarrow{\subset} Q(M,B)$ for short, and identify it with the subrepresentation of $M$ whose vector spaces are spanned by $L_0 \cap B^{(i)}$ for all $i \in Q_0$.
\begin{cor}\label{cor:Fixed-Points-as-suc-closed-sub-quiv}
Let $M$ be a well behaved $Q$-representation. Then 
\[ \mr{Gr}_\mb{e}(M)^{\C^*} \cong \big\{ L \overrightarrow{\subset} Q(M,B) \  \big\vert \ \vert L_0 \cap B^{(i)}\vert =  e_i \fa i \in Q_0 \big\} =: SC_\mb{e}^Q(M). \]
\end{cor}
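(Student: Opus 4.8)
The plan is to recognise the statement as a purely combinatorial reformulation of Theorem~\ref{trm:Fixed-Points}. By that theorem a fixed point $N \in \mr{Gr}_\mb{e}(M)^{\C^*}$ is a coordinate subrepresentation, hence is recorded by the subsets $S^{(i)} \subseteq B^{(i)}$ with $N^{(i)} = \mr{span}(S^{(i)})$ for all $i \in Q_0$; conversely a successor closed subquiver $L \overrightarrow{\subset} Q(M,B)$ is recorded by its vertex set $L_0$, since its arrow set is forced to consist of all arrows of $Q(M,B)$ issuing from $L_0$. First I would set up the tautological bijection $(S^{(i)})_{i \in Q_0} \leftrightarrow L_0$ given by $L_0 = \bigcup_{i \in Q_0} S^{(i)}$, under which the constraint $\bdim N = \mb{e}$, i.e.\ $\dim_\C N^{(i)} = e_i$, becomes $\vert L_0 \cap B^{(i)} \vert = e_i \fa i \in Q_0$, because distinct basis vectors are linearly independent.

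The heart of the argument is then to show that the coordinate subspaces $N^{(i)} = \mr{span}(S^{(i)})$ assemble into a genuine subrepresentation of $M$ exactly when the associated vertex set $L_0$ is successor closed. Here I would use the well behaved hypothesis: for every arrow $a : i \to j$ and every $b \in B^{(i)}$ there is a unique $b' \in B^{(j)}$ and a scalar $c \in \C$ with $M_a b = c b'$, and by the definition of $Q(M,B)$ the arrow $\tilde{a} : b \to b'$ is present precisely when $c \neq 0$. Consequently $M_a N^{(s_a)} = \mr{span}\{\, M_a b \mid b \in L_0 \cap B^{(s_a)} \,\}$ is spanned by exactly those basis vectors $b'$ that are targets of coefficient-quiver arrows emanating from $L_0$. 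Invoking linear independence once more, the subrepresentation condition $M_a N^{(s_a)} \subseteq N^{(t_a)}$ holds for every $a \in Q_1$ if and only if each such target $b'$ already lies in $L_0$; this is verbatim the requirement of Definition~\ref{def:successor-closed} that $L$ be successor closed.

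Combining the two steps finishes the proof: the assignment $N \mapsto L_0$ carries $\mr{Gr}_\mb{e}(M)^{\C^*}$ bijectively onto $SC_\mb{e}^Q(M)$, with inverse given by $L \mapsto \big( \mr{span}(L_0 \cap B^{(i)}) \big)_{i \in Q_0}$. I do not expect a genuine obstacle, as the whole argument is a transcription of Theorem~\ref{trm:Fixed-Points} into the language of Definition~\ref{def:successor-closed}; the only points demanding care are bookkeeping ones. One must ensure the basis $B$ is chosen so that $M_a$ sends each basis vector to a scalar multiple of a single basis vector — which is exactly the content of the well behaved hypothesis, and can always be arranged by Remark~\ref{rem:Segments-to-Indec} — and one must appeal to linear independence of $B$ at the two places where an inclusion of spans is converted into set-theoretic membership.
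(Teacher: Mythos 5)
Your proposal is correct and follows the same route the paper intends: the paper offers no proof of its own, presenting the corollary as an ``equivalent formulation'' of Theorem~\ref{trm:Fixed-Points} and citing Cerulli Irelli's Proposition~1 for the equivalence, and your argument is exactly that translation — coordinate subrepresentations correspond to vertex sets, the dimension condition becomes the counting condition, and the subrepresentation condition $M_a N^{(s_a)} \subseteq N^{(t_a)}$ matches successor closedness via the well behaved hypothesis and linear independence of $B$. The only cosmetic imprecision is your appeal to Remark~\ref{rem:Segments-to-Indec} to ``arrange'' the basis: the property that $M_a$ sends basis vectors to scalar multiples of basis vectors is already part of the definition of well behaved, so no extra arrangement is needed for the statement as given.
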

\subsection{Nilpotent Representations of the Equioriented Cycle}\label{subsec:nilpot-reps}
In this section, we introduce the class of quiver representations which we want to study. From now on, we restrict us to the setting that $Q$ is the equioriented cycle on $n$ vertices, which we denote by $\Delta_n$. The set of vertices and the set of arrows are both in bijection with $\Z_n := \Z/n\Z$.

\begin{defi}\label{def:nilpot-rep} A $\Delta_n$-representation $M$ is called \f{nilpotent}, if there exists a non-negative integer $N$ such that
\( M_{a+N} \circ  M_{a+N} \circ \dots \circ M_{a+1} \circ M_a = 0 \fa a \in \Z_n.\)
\end{defi}
\begin{ex}\label{ex:indec-nilpot}
Let $A_\ell$ be the equioriented quiver of type ${\tt A}$ with $l$-many vertices and for $i \in \Z_n$ let $F_i: A_\ell \to \Delta_n$ send $j \in [\ell] := \{ 1, \dots ,\ell\}$ to $i+j-1 \mod n$, and $(a:j \to j+1 \mod n)$ to $(\overline{a}: i+j-1 \mod n \to i+j \mod n)$. This induces the $\Delta_n$-representation $U_i(\ell):=F_i(V_{A_\ell})$.
\end{ex}
\begin{trm} (\cite[Theorem~7.6]{Kirillov2016}) The $\Delta_n$-representation $U_i(\ell)$ is nilpotent and indecomposable for every $i \in \Z_n$ and $\ell \in \Z_{\geq 1}$. All finite-dimensional nilpotent indecomposable  $\Delta_n$-representations are of this form.
\end{trm}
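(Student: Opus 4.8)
The statement has three parts, which I would treat in increasing order of difficulty, spending most of the effort on the classification.

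First I would prove nilpotency. By construction $U_i(\ell)=F_i(V_{A_\ell})$ carries a basis $\{v_1,\dots,v_\ell\}$ in which $v_j$ sits at vertex $i+j-1 \bmod n$ and the relevant arrow map sends $v_j\mapsto v_{j+1}$ for $j<\ell$ and $v_\ell\mapsto 0$; equivalently, its coefficient quiver is a single directed path on $\ell$ vertices. Hence any composite of $\ell$ consecutive arrow maps factors through this path and annihilates every basis vector, so taking $N=\ell-1$ in Definition~\ref{def:nilpot-rep} shows $U_i(\ell)$ is nilpotent. For indecomposability I would compute the endomorphism ring directly. Writing $\phi$ for the nilpotent ``shift'' $v_j\mapsto v_{j+1}$ (with $v_{\ell+1}=0$), every endomorphism $f$ commutes with all arrow maps, hence with $\phi$, and is therefore determined by $f(v_1)$; commutativity then forces $f(v_1)=\sum_r c_{1+rn}v_{1+rn}$ (only indices $\equiv 1 \bmod n$ survive, since $v_1$ and $v_{1+rn}$ are the basis vectors sitting at vertex $i$) and $f=\sum_r c_{1+rn}\phi^{rn}$. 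Thus $\mr{End}(U_i(\ell))\cong \C[x]/(x^m)$ with $m=\lceil \ell/n\rceil$, a local ring, and a local endomorphism ring has no idempotents other than $0$ and $1$, so $U_i(\ell)$ is indecomposable.

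The substantial part is the classification, and this is where I expect the main obstacle to lie. Let $M$ be a finite-dimensional nilpotent indecomposable $\Delta_n$-representation. Since $M$ is nilpotent, it is a module over $A_N:=\C\Delta_n/R_N$, the path algebra modulo the ideal $R_N$ generated by all paths of length $\ge N$, for any $N$ exceeding the nilpotency index of $M$. The plan is to exploit the fact that $A_N$ is a \emph{Nakayama} (serial) algebra: each indecomposable projective $P_k$ has as basis the paths of length $<N$ starting at $k$, and its radical filtration is uniserial with one-dimensional quotients sitting successively at $k,k+1,\dots$, and dually for the injectives. By the structure theory of Nakayama algebras, every indecomposable $A_N$-module is then uniserial and is determined up to isomorphism by its top $S_k$ (the simple supported at a single vertex $k$) together with its length $\ell$; this uniserial module is exactly $U_k(\ell)$. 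Hence $M\cong U_i(\ell)$ for suitable $i,\ell$, which completes the argument.

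The conceptual crux, and the reason the hypothesis \emph{nilpotent} is indispensable, is worth isolating. Without it, $\C\Delta_n$ is the path algebra of an affine type $\widetilde{A}_{n-1}$ quiver, whose indecomposables also include \emph{band} modules---one-dimensional at each vertex with all arrow maps invertible and prescribed monodromy---which are not of string type. Nilpotency forces the monodromy operator (the full loop $M^{(k)}\to M^{(k)}$ around the cycle) to be nilpotent, which excludes precisely these bands and leaves only the strings $U_i(\ell)$. If one preferred a direct argument in place of citing the Nakayama machinery, I would induct on $\dim M$: choose a basis vector generating the top of $M$, grow the string it spans under $\phi$, and use nilpotency to show this string splits off as a direct summand unless it exhausts $M$. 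The delicate step there is exactly the ruling out of branching and cyclic configurations at the vertices---and it is this bookkeeping that the serial structure of $A_N$ packages away cleanly, which is why I would favour the Nakayama route.
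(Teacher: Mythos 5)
There is no proof in the paper to compare yours against: the statement is imported wholesale from Kirillov's book, and the citation \cite[Theorem~7.6]{Kirillov2016} is the paper's entire justification. So your proposal can only be judged on its own terms, and it holds up. The nilpotency step is correct ($N=\ell-1$ matches Definition~\ref{def:nilpot-rep}, whose displayed composition consists of $N+1$ consecutive arrow maps). The indecomposability argument is correct and clean: an endomorphism commutes with the total shift $\phi$, hence is determined by the image of the generator $v_1$, which must lie in the span of the $v_{1+rn}$, giving $\mr{End}(U_i(\ell))\cong\C[x]/(x^{\lceil \ell/n\rceil})$ generated by $\phi^n$; this ring is local, so it has no idempotents besides $0$ and $1$. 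The classification step is also sound: a finite-dimensional nilpotent representation is precisely a module over the truncated path algebra $\C\Delta_n/J^N$ for $N$ large enough; this algebra is Nakayama because every vertex of $\Delta_n$ has exactly one incoming and one outgoing arrow, so its indecomposable projectives and injectives are uniserial; and Nakayama's structure theorem (whose standard proof, via projective covers over serial algebras, is independent of any quiver classification, so there is no circularity) then yields $M\cong P_k/\mr{rad}^{\ell}P_k$, which is visibly $U_k(\ell)$. Relative to the paper's black-box citation, your route has the merit of isolating exactly where nilpotency enters, namely in the passage to a finite-dimensional truncation. One inessential slip in your closing remark: the non-nilpotent indecomposables excluded by nilpotency are not all one-dimensional at each vertex; they have dimension vector $(m,\dots,m)$ with all arrow maps invertible and monodromy a Jordan block $J_m(\lambda)$ with $\lambda\neq 0$, the one-dimensional case being $m=1$. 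Since that remark plays no role in the actual argument, it does not affect correctness.
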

\subsection{Attractive Gradings}\label{subsec:Attractive Gradings}
We want to understand the local structure of the quiver Grassmannians to parametrise the one-dimensional $T$-orbits of a larger torus $T$. For this purpose we construct a cellular decomposition into the attracting sets of $\C^*$-fixed points. These attracting sets are isomorphic to affine spaces only under some assumptions about the grading:

\begin{defi}\label{def:attractive-grading}
A grading $\mb{wt}:=\mr{wt}(b)_{b \in B} \in \Z^B$ on $Q(M,B)_0$ is \f{attractive } if:
\begin{itemize}
\item[(AG1)] for any $i\in \Z_n$ it holds that $\mr{wt}(v^{(i)}_k)>\mr{wt}(v^{(i)}_\ell)$ whenever $k>\ell$,
\item[(AG2)] for any $i \in \Z_n$ there exists a weight $d(i)\in\Z$ such that \[\mr{wt}\big(v^{(i+1)}_\ell\big)=\mr{wt}\big(v^{(i)}_k\big)+d(i)\] whenever $v^{(i)}_k\to v^{(i+1)}_\ell\in Q(M,B)_1$.
\end{itemize}
\end{defi} 
\begin{prop}\label{prop:attractive-grading}(\cite[Proposition~5.1]{LaPu2020}) Every nilpotent $\Delta_n$-representation admits an attractive grading.
\end{prop}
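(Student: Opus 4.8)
The plan is to build an attractive grading by hand, exploiting two structural facts: a nilpotent $\Delta_n$-representation is a direct sum of the indecomposables $U_i(\ell)$, and condition (AG2) is not really a constraint but rather a \emph{recipe} prescribing how weights must increase along the arrows. Once (AG2) is forced by this recipe, the only genuine requirement left is the distinctness/monotonicity condition (AG1).

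First I would fix a basis $B$ adapted to the decomposition into indecomposable summands, as in Remark~\ref{rem:Segments-to-Indec}, so that the coefficient quiver $Q(M,B)$ is a disjoint union of directed paths, one per summand. Writing $M\cong\bigoplus_{s=1}^{r}U_{i_s}(\ell_s)$, let $w_1^s,\dots,w_{\ell_s}^s$ be the natural basis of $U_{i_s}(\ell_s)=F_{i_s}(V_{A_{\ell_s}})$, where $w_j^s$ sits at the vertex $i_s+j-1\in\Z_n$ and the arrows of its coefficient quiver are exactly the steps $w_j^s\to w_{j+1}^s$ for $1\le j\le \ell_s-1$, each realising the $\Delta_n$-arrow out of vertex $i_s+j-1$. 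Since the basis is adapted, every arrow of $Q(M,B)$ is of this form.

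Next I would force (AG2) by fixing all increments equal to $1$: set $d(i)=1$ for every $i\in\Z_n$ and define the grading along each path by
\[
\mr{wt}(w_j^s):=S_s+(j-1),
\]
for constants $S_s\in\Z$ to be chosen. Because the arrow $w_j^s\to w_{j+1}^s$ is the $\Delta_n$-arrow out of vertex $i_s+j-1$ and raises the weight by exactly $1=d(i_s+j-1)$, condition (AG2) holds automatically, for \emph{any} choice of the $S_s$. Thus the whole content of the statement collapses onto (AG1): after this definition the weights must be pairwise distinct at each vertex, so that $B^{(p)}$ can be reindexed in order of increasing weight.

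The main obstacle is therefore to guarantee distinctness of the weights simultaneously at every vertex while respecting the rigid shape that (AG2) has imposed. I would resolve it by separating the summands: choose $N:=1+\max_s\ell_s$ and set $S_s:=sN$. Then all weights of the $s$-th summand lie in the interval $[sN,\,sN+\ell_s-1]\subset[sN,(s+1)N)$, and these intervals are pairwise disjoint; hence the weights $\mr{wt}(w_j^s)$ are in fact globally distinct, a fortiori distinct at each vertex. Reindexing each $B^{(p)}$ by increasing weight yields (AG1), and this reordering affects neither $Q(M,B)$ nor the already verified (AG2), which depend only on the vectors, arrows and weights and not on their labels. This completes the construction. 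I would close by remarking that, since (AG1) gives (D1) and (AG2) gives (D2), the grading produced is in particular well behaved, so that the $\C^*$-action of Remark~\ref{rem:C-action} extends to $\mr{Gr}_\mb{e}(M)$.
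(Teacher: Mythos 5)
Your proof is correct. Note that the paper itself gives no argument for this statement beyond citing \cite[Proposition~5.1]{LaPu2020}, so your construction supplies a self-contained proof of the cited result rather than deviating from an in-text one. The route you take is the natural (and, judging from the gradings actually used in the paper's examples, the intended) one: pass to a basis adapted to the decomposition into the string modules $U_i(\ell)$, which is where nilpotency enters, so that the coefficient quiver is a disjoint union of equioriented paths; force (AG2) by making every arrow raise the weight by the same constant (your $d(i)=1$); and then secure (AG1) by choosing the per-summand offsets $S_s$ far enough apart that all weights are globally distinct, after which relabelling each $B^{(i)}$ in increasing weight order costs nothing since neither the arrows nor the weights depend on the labels. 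Your closing observation that (AG1) and (AG2) imply (D1) and (D2), so the grading is well behaved and the $\C^*$-action descends to $\mr{Gr}_{\mb{e}}(M)$, is also accurate and matches Remark~\ref{rem:C-action}.
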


It turns out, that in the construction of cellular decompositions of quiver Grassmannians we need one additional property of the nilpotent representations:
\begin{defi} A nilpotent $M \in  \mr{rep}_\C (\Delta_n)$ is \f{alignable} if there exists a basis $B$, such that for $Q(M,B)$ the following holds over each $i \in Z_n$:
\begin{itemize}
\item[(SA1)] endpoints of segments have larger indices than points with outgoing arrows:
if $M_i v^{(i)}_\ell = 0$ and $M_i v^{(i)}_k \neq 0$, then $ \ell > k$. 
\item[(SA2)] outgoing arrows are order preserving:\\
if $M_i v^{(i)}_\ell =  v^{(i+1)}_{\ell'}$ and $M_i v^{(i)}_k = v^{(i+1)}_{k'}$ with $\ell>k$, then $ \ell' > k'$. 
\end{itemize}
\end{defi}
\begin{lma}\label{lma:2s-free-alignable}(\cite[Proposition~4.8]{LaPu2020})
Every nilpotent $M \in  \mr{rep}_\C (\Delta_n)$ is alignable.
\end{lma}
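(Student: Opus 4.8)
The plan is to start from a basis $B$ as in Remark~\ref{rem:Segments-to-Indec}, so that the connected components of the coefficient quiver $Q(M,B)$ are in bijection with the indecomposable summands of $M$. Since $M$ is nilpotent, each such summand is some $U_j(\ell)$, whose coefficient quiver is a \emph{finite} directed path, which I will call a \emph{segment}; along a segment the maps $M_a$ send each basis vector either to the next vector of the segment or, at the final vector, to $0$. Conditions (SA1) and (SA2) constrain only the way the vectors of each $B^{(i)}$ are indexed, and reindexing within each $B^{(i)}$ changes neither the component structure of $Q(M,B)$ nor the property that $M_a$ sends basis vectors to basis vectors (or to $0$). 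Hence it suffices to produce, for every $i \in \Z_n$, a total order on $B^{(i)}$ for which (SA1) and (SA2) hold.

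To this end I would attach to each basis vector $v$ the invariant $d(v)$, defined as the number of arrows on the forward path inside the segment of $v$ running from $v$ to the endpoint of that segment. By construction $d(v)=0$ exactly when $v$ is the final vector of its segment, i.e. exactly when $M_a v = 0$; and whenever $M_a v = v' \neq 0$ one has $d(v') = d(v)-1$. I would then fix, once and for all, an arbitrary total order $\prec$ on the set of indecomposable summands (equivalently, on the segments), and order each $B^{(i)}$ by the rule: $v$ receives a smaller index than $w$ if and only if $d(v) > d(w)$, or $d(v)=d(w)$ and the segment of $v$ precedes that of $w$ in $\prec$. This is a genuine total order because two vectors of $B^{(i)}$ lying in the same segment sit at positions differing by a nonzero multiple of $n$ (the segment returns to the vertex $i$ only after winding once around the cycle), so their $d$-values differ; ties can therefore occur only between distinct segments, where $\prec$ resolves them.

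With this indexing, (SA1) is immediate: endpoints have the minimal value $d=0$ and hence the largest indices, while every vector with a nonzero outgoing arrow has $d \geq 1$ and a smaller index, so $\ell>k$ as required. For (SA2), suppose $M_i v^{(i)}_\ell = v^{(i+1)}_{\ell'}$ and $M_i v^{(i)}_k = v^{(i+1)}_{k'}$ with $\ell > k$. Then either $d(v^{(i)}_\ell) < d(v^{(i)}_k)$, in which case $d(v^{(i+1)}_{\ell'}) = d(v^{(i)}_\ell)-1 < d(v^{(i)}_k)-1 = d(v^{(i+1)}_{k'})$ forces $\ell' > k'$; or $d(v^{(i)}_\ell) = d(v^{(i)}_k)$ with the segment of $v^{(i)}_\ell$ being $\prec$-larger, and since $M_i$ keeps each vector inside its own segment, the images $v^{(i+1)}_{\ell'}, v^{(i+1)}_{k'}$ still have equal $d$-value and lie in the same two segments in the same $\prec$-order, again giving $\ell' > k'$.

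I expect the main obstacle to be precisely the consistency of the tie-breaking across all vertices simultaneously: a single segment may wind several times around $\Delta_n$ and thus contribute several vectors over one vertex, and the order chosen over $i$ must be compatible with the one over $i+1$ through the maps $M_i$. What makes everything cohere is that $M_i$ never moves a vector into a different segment, so a fixed global order $\prec$ on segments propagates consistently along the arrows, while the invariant $d$ simultaneously controls both the coarse ordering (by proximity to an endpoint) and its behaviour under the maps. Once this compatibility is in place, the verification of (SA1) and (SA2) is the routine check sketched above.
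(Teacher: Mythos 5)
Your proof is correct, but note that the paper itself contains no argument for this lemma: it is imported verbatim from \cite[Proposition~4.8]{LaPu2020}, so there is no in-paper proof to compare yours against line by line. What you supply is a complete, self-contained argument, and its logic is sound: starting from the basis of Remark~\ref{rem:Segments-to-Indec}, conditions (SA1) and (SA2) indeed only constrain the indexing within each $B^{(i)}$, so it suffices to exhibit a total order at every vertex; your primary key $d(v)$ (distance to the endpoint of the segment of $v$) is well suited because $d(v)=0$ characterises exactly the vectors killed by $M_i$, and $d$ drops by exactly one along every arrow of the coefficient quiver. The one point where a gap could have appeared — well-definedness of the order — you close correctly with the winding observation: two distinct vectors of the same segment over the same vertex $i$ sit at positions congruent mod $n$, hence have $d$-values differing by a nonzero multiple of $n$, so ties only occur between distinct segments and are resolved by the fixed global order $\prec$. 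Your two-case verification of (SA2) (strict inequality of $d$-values, versus equal $d$-values with the tie-break propagating unchanged since $M_i$ preserves segments) is exactly what is needed, and crucially the order at each vertex is defined by one global rule rather than propagated vertex by vertex, so the "consistency" worry you raise at the end dissolves by construction. This explicit reindexing by an invariant of the segments is in the same spirit as the construction in the cited reference, which likewise produces the aligned basis by sorting the points over each vertex according to data of the indecomposable summands; your version has the merit of isolating the single invariant $d$ that simultaneously forces (SA1) and behaves equivariantly under the maps $M_i$, which makes the verification particularly transparent.
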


\begin{rem}\label{rem:alignability}
From now on we can assume without loss of generality that we have a basis of a nilpotent $M \in  \mr{rep}_\C (\Delta_n)$ such that its coefficient quiver is aligned. 
\end{rem}
\begin{rem}\label{rem:different-alignments}
The aligned coefficient quiver obtained in \cite[Proposition~4.8]{LaPu2020} is not unique. In fact, it will be useful to work with different alignments. This is studied in more detail in Section~\ref{sec:comb-basis}.
\end{rem}
\subsection{Cellular Decomposition of Quiver Grassmannians for Nilpotent Representations}\label{subsec:Cellular-Decomposition-nilpot}
In this section we recall the construction of cellular decompositions of quiver Grassmannians for nilpotent representations. Let $X$ be a projective variety and let $\C^*$ act on $X$ with finitely many fixed points. Let $\{x_1, \ldots, x_m\}$ be the fixed point set, which we denote by $X^{\C^*}$. This induces a decomposition
\begin{equation}\label{eqn:BBdecomposition}
X=\bigcup_{i\in [m]} W_i, \quad\hbox{ with }\quad   W_i := \left\{ x \in X \mid \lim_{z \to 0} z.x =x_i \right\},
\end{equation}
where $W_i$ is called attractive locus of $x_i$. We call this a \f{BB-decomposition} since decompositions of this type where first studied by Bialynicki-Birula in \cite{Birula1973}. 

\begin{trm}\label{trm:cell_decomp-forests}(\cite[Theorem~5.6]{LaPu2020})
Let $M \in \mr{rep}_\C(\Delta_n)$ be nilpotent and let $\C^*$ act on $\mr{Gr}_\mb{e}(M)$ induced by an attractive grading on the aligned coefficient quiver $Q(M,B)$ as in Definition~\ref{def:attractive-grading}. Then, for every fixed point $L \in \mr{Gr}_{\mb{e}}(M)^{\C^*}$, the attractive locus $W_L$
is isomorphic to an affine space and the quiver Grassmannian admits a cellular decomposition 
\[  
\mr{Gr}_{\mb{e}}(M) = \bigcup_{L \in \mr{Gr}_{\mb{e}}(M)^{\C^*}} W_L.  
\]
\end{trm}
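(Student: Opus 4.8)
The plan is to invoke the Bialynicki-Birula machinery from \cite{Birula1973} together with the structural results already established, rather than to construct the cells by hand from scratch. The statement asserts two things: that each attractive locus $W_L$ is an affine space, and that the $W_L$ collectively decompose $\mr{Gr}_{\mb{e}}(M)$. The decomposition itself is essentially free: since $M$ is well behaved (being nilpotent, it admits the attractive grading of Proposition~\ref{prop:attractive-grading}, which in particular satisfies the well-behavedness conditions), the $\C^*$-action extends to $\mr{Gr}_{\mb{e}}(M)$ by Remark~\ref{rem:C-action}, the quiver Grassmannian is projective, and by Theorem~\ref{trm:Fixed-Points} the fixed point set is finite. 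These are exactly the hypotheses under which \eqref{eqn:BBdecomposition} applies, so the set-theoretic decomposition into attracting loci holds immediately. The whole content of the theorem is therefore concentrated in showing that each $W_L$ is isomorphic to an affine space.

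First I would fix a $\C^*$-fixed point $L$, identified via Corollary~\ref{cor:Fixed-Points-as-suc-closed-sub-quiv} with a successor-closed subquiver of the aligned coefficient quiver $Q(M,B)$. The standard approach is to present $\mr{Gr}_{\mb{e}}(M)$near $L$ as a $\C^*$-stable affine chart, realise $W_L$ as a closed $\C^*$-subvariety of that chart, and then linearise. Concretely, I would write down an affine neighbourhood of $L$ inside the ambient product of ordinary Grassmannians $\prod_{i\in\Z_n}\mr{Gr}(e_i, m_i)$ using the big cell coordinates opposite to $L$, cut out by the linear subrepresentation conditions $M_a U^{(s_a)}\subseteq U^{(t_a)}$. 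On this chart the coordinates carry $\C^*$-weights determined by differences of the grading weights $\mr{wt}(b)$, and the attracting locus $W_L$ is the subset on which only coordinates of strictly positive weight survive as $z\to 0$. The key technical verification is that the subrepresentation equations, restricted to $W_L$, can be solved so that $W_L$ becomes the graph of a morphism over an affine space of positive-weight coordinates --- equivalently, that the tangent space weights at $L$ are all nonzero and that the positive part has the expected dimension.

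The alignment hypotheses (SA1) and (SA2), guaranteed by Lemma~\ref{lma:2s-free-alignable} and Remark~\ref{rem:alignability}, are what make this work: conditions (AG1) and (AG2) on the attractive grading ensure that the weight of each coordinate direction is a well-controlled difference of vertex weights, and the order-preserving / endpoint conditions on segments guarantee that the arrow maps $M_a$ interact compatibly with the chosen coordinate ordering, so that the incidence relations defining a subrepresentation become triangular in the coordinates. This triangularity is precisely what lets one eliminate the ``negative'' coordinates in terms of the ``positive'' ones and exhibit $W_L$ as an affine space. Since this is a recollection of \cite[Theorem~5.6]{LaPu2020}, I would cite that result for the detailed chart computation rather than reproduce it.

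The main obstacle I anticipate is verifying the affine-space structure of $W_L$ rather than the mere decomposition. In general Bialynicki-Birula cells are only affine \emph{bundles} over the fixed components, or can fail to be smooth when the variety is singular; a priori quiver Grassmannians for the equioriented cycle need not be smooth. What rescues the argument is the combinatorial rigidity coming from alignability: it forces the local equations at each fixed point into a shape whose positive-weight solution set is linear. I would therefore concentrate the effort on checking that, for an aligned coefficient quiver, the successor-closed description of $L$ together with (SA1)--(SA2) makes every defining relation of $W_L$ solvable in the positive-weight variables, leaving no residual nonlinear constraints. Once this local model is established for an arbitrary fixed point $L$, ranging over all $L\in\mr{Gr}_{\mb{e}}(M)^{\C^*}$ and combining with the BB-decomposition \eqref{eqn:BBdecomposition} yields the stated cellular decomposition.
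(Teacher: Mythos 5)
The paper gives no proof of this statement at all: it is recalled verbatim from \cite[Theorem~5.6]{LaPu2020}, which is exactly where you ultimately defer the chart computation, so your proposal is consistent with the paper's treatment. Your outline is also sound and matches the strategy behind the cited result --- the set-theoretic decomposition is automatic from projectivity, well-behavedness and finiteness of the fixed-point set, the entire content lies in the affineness of each $W_L$ via the alignment conditions, and your worry that BB-cells of singular varieties need not be affine is precisely what the paper's own Remark~\ref{rem:bad-alignment} illustrates with the non-aligned example where $W_L$ is cut out by $ac+bd=0$.
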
 

\begin{rem}\label{rem:bad-alignment}
Note that the way in which the connected components of the coefficient quiver are arranged plays an important role. For example, consider the quiver Grassmannian $\mr{Gr}_\mb{e}(M)$ which is isomorphic to the Feigin degeneration of the flag variety $\mathcal{F}l_3$ \cite[Proposition~2.7]{CFR2012}. With the basis as in \cite[Remark~3.14]{CFR2013} $M$ is alignable, and the attractive loci of the fixed points are isomorphic to affine spaces. The representation $M$ is not aligned if we fix $B$ such that 
\begin{center}
\begin{tikzpicture}[scale=0.35]
\node at (-3.3,2.5) {$Q(M,B) = $};
\draw[fill=black] (0,3) circle (.12);
\draw[arrows={-angle 90}, shorten >=2, shorten <=2]  (0,3) -- (1.5,3);
\draw[fill=black] (1.5,3) circle (.12);
\draw[fill=black] (0,2) circle (.12);
\draw[arrows={-angle 90}, shorten >=2, shorten <=2]  (0,2) -- (1.5,2);
\draw[fill=black] (1.5,2) circle (.12);
\draw[fill=black] (0,4) circle (.12);
\draw[fill=black] (1.5,1) circle (.12);
\end{tikzpicture}
\end{center}
Nevertheless there still exists an attractive grading, but the attractive loci for the corresponding $\C^*$-action are not isomorphic to affine spaces: Consider the attractive locus of the $\C^*$-fixed point 
\begin{center}
\begin{tikzpicture}[scale=0.35]
\node at (-2.9,2.5) {$L= $};
\draw[fill=white] (0,3) circle (.12);
\draw[arrows={-angle 90}, shorten >=2, shorten <=2]  (0,3) -- (1.5,3);
\draw[fill=black] (1.5,3) circle (.12);
\draw[fill=white] (0,2) circle (.12);
\draw[arrows={-angle 90}, shorten >=2, shorten <=2]  (0,2) -- (1.5,2);
\draw[fill=black] (1.5,2) circle (.12);
\draw[fill=black] (0,4) circle (.12);
\draw[fill=white] (1.5,1) circle (.12);
\end{tikzpicture}
\end{center}
For simplicity we write $v_j := v^{(1)}_j$ for $j \in [3]$ and $w_j := v^{(2)}_j$ for $j \in [4]$  then 
\[W_L = \Big\{ p = \big(<v_1+a v_2 + bv_3>,<w_2+c w_4,w_3+d w_4>\big) \, \vert \, ac +bd = 0 \Big\}. \]
\end{rem}

\section{GKM-Theory}\label{sec:GKM-Attractive-Forests}
We explain here how moment graphs encode the equivariant cohomology of GKM-varieties by a version of the Localisation Theorem for equivariant cohomology from \cite{GKM1998}. We recall moreover that by \cite{LaPu2020} every quiver Grassmannian for a nilpotent $\Delta_n$-representation is a GKM-variety whose moment graph has an explicit combinatorial description.
\subsection{Basics of GKM-Theory}\label{subsec:Basics-GKM-Theory} 
Let $X$ be a projective algebraic variety over $\C$. The action of an algebraic torus $T \cong (\C^*)^r$ on $X$ is \f{skeletal} if the number of $T$-fixed points and the number of one-dimensional $T$-orbits in $X$ is finite. A cocharacter $\chi \in \mathfrak{X}_*(T)$ is called \f{generic} for the $T$-action on $X$ if $X^T = X^{\chi(\C^*)}$. By $H_T^\bullet(X)$ we denote the $T$-equivariant cohomology of $X$ with rational coefficients. 
\begin{defi} The pair $(X,T)$ is a \f{GKM-variety} if the $T$-action on $X$ is skeletal and the rational cohomology of $X$ vanishes in odd degrees.
\end{defi}
\begin{rem}
By \cite[Lemma~2]{Brion2000} this is equivalent to \cite[Definition~1.4]{LaPu2020}. 
\end{rem}
For every one-dimensional $T$-orbit $E$ in a projective GKM-variety there exists a $T$-equivariant isomorphism between its closure $\overline{E}$ and $\C\mathbb{P}^1$. This implies that each one-dimensional $T$-orbit connects two distinct $T$-fixed points of $X$.
\begin{defi}\label{def:moment-graph}Let $(G,T)$ be a GKM-variety, and let $\chi\in \mathfrak{X}_*(T)$ be a generic cocharacter. The corresponding \f{moment graph} $\mathcal{G}=\mathcal{G}(X,T, \chi)$ of a GKM-variety is given by the following data:
\begin{itemize}
\item[(MG0)] the $T$-fixed points as vertices, i.e.: $\mathcal{G}_0 = X^T$,
\item[(MG1)] the closures of one-dimensional $T$-orbits $\overline{E} = E \cup \{x,y\}$ as edges in $\mathcal{G}_1$, oriented from $x$ to $y$ if $\lim_{\lambda \to 0}\chi(\lambda).p=x$ for $p \in E$,
\item[(MG2)] every $\overline{E}$ is labelled by a character $\alpha_E \in \mathfrak{X}^*(T)$ describing the $T$-action on $E$. 
\end{itemize}
\end{defi}
\begin{rem}\label{rem:torus-characters-part-i}
The characters in (MG2) are uniquely determined up to a sign but this sign has no effect on the statement of Theorem \ref{thm:GKM}. We can hence assume that a choice of a sign for any edge label has been made once and for all.
\end{rem}
\begin{rem}
In \cite{LaPu2020}, the torus cocharacter from (MG1) was implicit in the definition of moment graph, and did not appear in the notation, because, to achieve our goals, it was enough to pick any generic cocharacter which provided a cellular decomposition. In the present paper, the orientation of the edges has a fundamental role, in order to study the structure of permutation representation on the equivariant cohomology. We will see that not any generic cocharacter  is good for our purposes, even in the case it provides a cellular decomposition (cf. Example~\ref{ex:cohomology-generators-loop-quiver}). 
\end{rem}
\begin{rem}\label{rem:torus-characters-part-ii} Let $T$ be a torus of rank $q$ and let $\{\tau_1, \ldots, \tau_q\}$ be a $\Z$-basis  of its character lattice $\mathfrak{X}^*(T)$. For any torus character $\alpha$, by abuse of notation, we denote by $\alpha$ also its image $\alpha\otimes 1$ in the $\Q$-vector space $\mathfrak{X}^*(T)\otimes_\Z \Q$.
Following \cite[Section~2]{Tymoczko2008}, we identify the symmetric algebra of this vector space with the polynomial ring $\Q[T]=\Q[\tau_1,\dots,\tau_q]$, and hence the latter  with $R:=H_T^\bullet(\textrm{pt})$.  Observe that $R$ is $\mathbb{Z}$-graded with $\textrm{deg}(\tau_i)=2$ for any $i$.
\end{rem}
One of our main motivation to consider moment graphs is the following result: Goresky-Kottwitz-MacPherson version of the Localisation Theorem.
\begin{trm}\label{thm:GKM}(\cite{GKM1998}) Let $(X, T)$ be a GKM-variety with moment graph $\mathcal{G}=\mathcal{G}(X,T,\chi)$. Then 
\[
H_T^\bullet(X) \cong \left\{(f_x)\in\bigoplus_{x\in \mathcal{G}_0}R \ \Big| \
\begin{array}{c}
f_{x_E}-f_{y_E}\in \alpha_{E} R\\
 \hbox{ for any }\overline{E}=E\cup\{x_E, y_E\}\in\mathcal{G}_1\end{array}
\right\}.\]  
\end{trm}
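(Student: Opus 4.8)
The statement is the classical Goresky--Kottwitz--MacPherson localisation theorem, so the plan is to follow the standard three-step strategy, using that the GKM-variety hypothesis supplies exactly the input needed at each stage. The first ingredient I would invoke is \emph{equivariant formality}: since the rational cohomology of $X$ vanishes in odd degrees, the Serre spectral sequence of the fibration $X \to X_T \to BT$ (with $X_T$ the Borel construction) degenerates at $E_2$, so $H_T^\bullet(X)$ is a free $R$-module and the restriction map $\iota^\ast \colon H_T^\bullet(X) \to H_T^\bullet(X^T) = \bigoplus_{x \in \mathcal{G}_0} R$ to the fixed points is injective. This already identifies $H_T^\bullet(X)$ with a submodule of the right-hand side of the claimed isomorphism, and it remains only to pin down the image.

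For the image I would appeal to the Chang--Skjelbred lemma, which states that under equivariant formality
\[
\operatorname{im}(\iota^\ast) = \bigcap_{H} \operatorname{im}\big(H_T^\bullet(X^H) \to H_T^\bullet(X^T)\big),
\]
the intersection running over all codimension-one subtori $H \subseteq T$. This reduces the computation to understanding the fixed loci $X^H$, that is, to the one-skeleton of the action. Here the skeletal hypothesis enters: beyond isolated fixed points, $X^H$ is nontrivial precisely when $H$ is the kernel of some edge character $\alpha_E$, and in that case the relevant component of $X^H$ is the closure $\overline{E} \cong \C\Pbb^1$ of a one-dimensional orbit, as recorded just before the statement.

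The final step is a direct computation on each such $\C\Pbb^1$. For an edge $\overline{E} = E \cup \{x_E, y_E\}$ on which $T$ acts through $\alpha_E$, the equivariant cohomology $H_T^\bullet(\overline{E})$ is free over $R$, and its restriction to the two poles is exactly the set of pairs $(f_{x_E}, f_{y_E}) \in R \oplus R$ satisfying $f_{x_E} - f_{y_E} \in \alpha_E R$. Intersecting these conditions over all edges lying over a fixed codimension-one subtorus, and then over all such subtori, produces precisely the relations displayed in the statement, so that $\operatorname{im}(\iota^\ast)$ coincides with the described submodule. I expect the main obstacle to be the second step: verifying that each $X^H$ decomposes as a union of isolated fixed points and orbit-closure copies of $\C\Pbb^1$, so that Chang--Skjelbred genuinely localises the problem edge-by-edge. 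The skeletal condition together with the $T$-equivariant isomorphism $\overline{E} \cong \C\Pbb^1$ are exactly what make this decomposition available, and once it is in place the edge labels $\alpha_E$ of the moment graph read off the GKM compatibility conditions.
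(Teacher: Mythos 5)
The paper does not actually prove this statement: it is quoted from Goresky--Kottwitz--MacPherson \cite{GKM1998} as background, so there is no internal proof to compare yours against. Judged on its own merits, your outline is the standard (and correct) proof of the GKM localisation theorem, and it is essentially the argument in the cited literature: odd-degree vanishing of $H^\bullet(X)$ forces the Serre spectral sequence of $X\to X_T\to BT$ to degenerate at $E_2$, so $H_T^\bullet(X)$ is a free $R$-module and, by the localisation theorem, the restriction $\iota^*$ to the fixed locus is injective; the Chang--Skjelbred lemma identifies the image of $\iota^*$ with the intersection, over codimension-one subtori $H\subseteq T$, of the images of $H_T^\bullet(X^H)\to H_T^\bullet(X^T)$; and the computation on each orbit closure $\overline{E}\cong\C\Pbb^1$ produces exactly the congruences $f_{x_E}-f_{y_E}\in\alpha_E R$.

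Two refinements are worth recording. First, the step you flag as the main obstacle is settled by a dimension count: if some irreducible component $Y$ of $X^H$ had $\dim Y\geq 2$, then, since $T$ acts on $Y$ through the one-dimensional quotient $T/H$, every $T$-orbit in $Y$ has dimension at most one, so $Y$ would have to contain infinitely many one-dimensional $T$-orbits, contradicting the skeletal hypothesis; hence $\dim X^H\leq 1$ for every codimension-one $H$, and the nontrivial $H$ are precisely the identity components of the kernels of the edge characters $\alpha_E$. Second, a connected component of such an $X^H$ need not be a single $\C\Pbb^1$: it can be a connected union of several one-dimensional orbit closures meeting at fixed points (a chain, or several edges through one vertex, which occurs whenever distinct edges carry proportional characters). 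This does not damage the argument---a Mayer--Vietoris induction over the finitely many irreducible components reduces the image computation to the two-fixed-point $\C\Pbb^1$ case, and the resulting conditions are still imposed edge by edge---but your phrase ``the relevant component of $X^H$ is the closure $\overline{E}$'' should be weakened to account for these configurations.
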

\subsection{Torus Actions}\label{subsec:T-Action}
Let $M$ be a nilpotent $\Delta_n$-representation. In \cite{LaPu2020}, we defined an action of the torus $T:= (\C^*)^{d+1}$ on $\mr{Gr}_{\mb{e}}(M)$, where $d$ is the number of indecomposable summands of $M$. By \cite[Theorem~1.11]{Kirillov2016}, we can always assume a choice of the basis $B$ of $M$ such that the connected components of $Q(M,B)$ are in bijection with the indecomposable summands of $M$. Notice that this property is invariant under permutations of the basis elements. 

Fix an order $U_1, \dots, U_{d}$ for the indecomposables. This induces an associated partition of the basis $B$, into subsets $B_U$ corresponding to the points on a given indecomposable $U$. As introduced in Example~\ref{ex:indec-nilpot}, every indecomposable summand $U$ of $M$ has a unique initial and terminal vertex. The starting point gets index zero. Let $\ell_j:=\ell(U_j)$ be the number of points on the $j$-th indecomposable in $Q(M,B)$ for $j \in [d]$, then we have 
\begin{equation}\label{eqn:indec-basis}
 B_{U_j} = \{ b_{j,0}, \dots, b_{j,\ell_j-1} \}.
\end{equation}
For any $\gamma := (\gamma_0, (\gamma_j)_{j \in [d]}) \in T$ we set
\[ \gamma.b_{j,p} := \gamma^{p}_0 \gamma_j \cdot b_{j,p}.\]
By linear extension, we get an action on the vector space $\bigoplus_{i\in \Z/n\Z} M^{(i)}$, which preserves each summand. 
\begin{lem}\label{lma:T-action-extends-to-quiver-Grass}(\cite[Lemma~5.10]{LaPu2020} Let $M$ be a representation of $\Delta_n$, and let $T$ act on $\bigoplus M^{(i)}$ as above. Then for any $N\in\mr{Gr}_{\mb{e}}(M)$ and any $\gamma\in T$, $\gamma\cdot N\in \mr{Gr}_\mb{e}(N)$. 
\end{lem}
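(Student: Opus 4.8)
The plan is to verify the two conditions that make $\gamma\cdot N$ a point of $\mr{Gr}_\mb{e}(M)$: that it has dimension vector $\mb{e}$, and that it is closed under every structure map $M_a$. Writing $\Phi_\gamma$ for the linear operator on $\bigoplus_i M^{(i)}$ implementing the $T$-action, the dimension condition is immediate. Indeed $\Phi_\gamma$ rescales each basis vector $b_{j,p}$ by the nonzero scalar $\gamma_0^p\gamma_j$ and hence preserves each summand $M^{(i)}$; being invertible on each $M^{(i)}$, it preserves dimensions, so $\bdim(\gamma\cdot N)=\bdim N=\mb{e}$. The whole content of the statement therefore lies in the closedness condition, and the key is to understand how $\Phi_\gamma$ interacts with the maps $M_a$.

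The central computation is a commutation relation. Using the basis $B$ adapted to the indecomposable decomposition (Remark~\ref{rem:Segments-to-Indec}), the unique arrow $a:i\to i+1$ out of each vertex sends a basis vector to a possibly zero multiple of a basis vector in the \emph{same} indecomposable, with the second index incremented by one, i.e. $M_a b_{j,p}\in\{0,\,b_{j,p+1}\}$. I would then compute on basis vectors: when $M_a b_{j,p}=b_{j,p+1}$, applying $M_a$ after $\Phi_\gamma$ yields the factor $\gamma_0^{p}\gamma_j$, while applying $\Phi_\gamma$ after $M_a$ yields $\gamma_0^{p+1}\gamma_j$. The component scalar $\gamma_j$ cancels precisely because source and target sit in the same indecomposable, and the sole discrepancy is the single power of $\gamma_0$ produced by the increment $p\mapsto p+1$. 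When $M_a b_{j,p}=0$ both sides vanish. This gives, for every arrow $a:i\to i+1$,
\[
M_a\circ\Phi_\gamma=\gamma_0^{-1}\,\Phi_\gamma\circ M_a \qquad\text{on }M^{(i)}.
\]

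With this relation the subrepresentation condition follows formally. Since $(\gamma\cdot N)^{(i)}=\Phi_\gamma(N^{(i)})$ and $N$ is a subrepresentation,
\[
M_a\big((\gamma\cdot N)^{(i)}\big)=\gamma_0^{-1}\,\Phi_\gamma\big(M_a(N^{(i)})\big)\subseteq\gamma_0^{-1}\,\Phi_\gamma\big(N^{(i+1)}\big)=\Phi_\gamma\big(N^{(i+1)}\big)=(\gamma\cdot N)^{(i+1)},
\]
where the decisive last simplification uses that multiplying a linear subspace by the nonzero scalar $\gamma_0^{-1}$ leaves it unchanged. Thus $M_a((\gamma\cdot N)^{(i)})\subseteq(\gamma\cdot N)^{(i+1)}$ for every arrow, so $\gamma\cdot N$ is a subrepresentation, completing the proof.

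The point I expect to require the most care is conceptual rather than computational. The operator $\Phi_\gamma$ is \emph{not} an endomorphism of $M$ as a $\Delta_n$-representation: it genuinely fails to commute with the $M_a$. What saves the statement is that the failure is a single global scalar $\gamma_0^{\pm1}$, uniform across all indecomposables and positions, which is invisible at the level of subspaces. I would therefore isolate the commutation relation as the one nontrivial ingredient and emphasise that it is exactly the uniformity of this scalar---guaranteed by the fact that each arrow raises the power of $\gamma_0$ by precisely one in the definition of the action---that allows the action on $\bigoplus_i M^{(i)}$ to descend to $\mr{Gr}_\mb{e}(M)$.
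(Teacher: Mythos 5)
Your proof is correct and complete. The paper itself contains no argument for this lemma---it is imported verbatim from \cite[Lemma~5.10]{LaPu2020}---so there is no in-text proof to diverge from; your argument (dimension preservation via invertibility of $\Phi_\gamma$ on each $M^{(i)}$, the twisted commutation relation $M_a\circ\Phi_\gamma=\gamma_0^{-1}\,\Phi_\gamma\circ M_a$ forced by the fact that each arrow raises the exponent of $\gamma_0$ by exactly one while staying inside a single indecomposable so that $\gamma_j$ cancels, and the invariance of linear subspaces under the nonzero scalar $\gamma_0^{-1}$) is exactly the natural argument the citation stands in for, and it also correctly reads the statement's ``$\mr{Gr}_\mb{e}(N)$'' as the evident typo for $\mr{Gr}_\mb{e}(M)$.
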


In Section~\ref{subsec:Quiver-GKM-Theorem}, we recall a class of quiver Grassmannians which, together with the $T$-action defined above, have the structure of GKM-varieties.  
\begin{prop}\label{prop:generic-cochar}
Let $M \in \mr{rep}_\C(\Delta_n)$ be nilpotent with $d$-many indecomposable direct summands. Let the torus $T:= (\C^*)^{d+1}$ act on $\mr{Gr}_\mb{e}(M)$ as in Lemma~\ref{lma:T-action-extends-to-quiver-Grass}. Consider a $\C^*$-action on $\mr{Gr}_\mb{e}(M)$ induced by an attractive grading as in \eqref{eqn:C-action}. Then there exists a generic cocharacter such that the above $\C^*$-action coincides with the one obtained by composing $\chi$ with the $T$-action.
\end{prop}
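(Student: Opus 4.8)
The plan is to read off the cocharacter directly from the weights the two actions assign to the basis vectors $b_{j,p}$ of \eqref{eqn:indec-basis}, and then to deduce genericity from the coincidence of the two fixed loci.

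First I would record the $T$-weights. Since $\gamma.b_{j,p} = \gamma_0^p\gamma_j\,b_{j,p}$, with respect to a $\Z$-basis $\tau_0,\tau_1,\dots,\tau_d$ of $\mathfrak{X}^*(T)$ dual to the factors of $T=(\C^*)^{d+1}$, the vector $b_{j,p}$ is a $T$-weight vector of weight $p\,\tau_0+\tau_j$. A cocharacter $\chi\in\mathfrak{X}_*(T)$ is the same datum as a tuple $(c_0,c_1,\dots,c_d)\in\Z^{d+1}$ with $\chi(z)=(z^{c_0},\dots,z^{c_d})$, and composing $\chi$ with the $T$-action lets $z\in\C^*$ scale $b_{j,p}$ by $z^{\langle\chi,\,p\tau_0+\tau_j\rangle}=z^{pc_0+c_j}$. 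The $\C^*$-action of \eqref{eqn:C-action} scales $b_{j,p}$ by $z^{\mr{wt}(b_{j,p})}$. As both actions are diagonal in $B$ and the action on $\mr{Gr}_\mb{e}(M)$ is induced from the one on $\bigoplus_i M^{(i)}$, the two $\C^*$-actions agree as soon as $\mr{wt}(b_{j,p})=pc_0+c_j$ for all $j,p$.

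It remains to produce such integers. Travelling along the segment of $U_j$, the index $p$ runs through the vertices $i_j,i_j+1,\dots$ of $\Delta_n$, and by property (AG2) of Definition~\ref{def:attractive-grading} each step raises the weight by the increment attached to the current vertex, so $\mr{wt}(b_{j,p})=\mr{wt}(b_{j,0})+\sum_{q=0}^{p-1}d(i_j+q)$. This is affine-linear in $p$ with one and the same slope for every $j$ exactly when all increments $d(i)$ share a common value $c_0$; working with an attractive grading of this form, I would then set $c_j:=\mr{wt}(b_{j,0})$, so that $pc_0+c_j=\mr{wt}(b_{j,p})$ and the cocharacter $\chi:=(c_0,c_1,\dots,c_d)$ induces precisely the $\C^*$-action of \eqref{eqn:C-action}.

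Finally I would check that $\chi$ is generic, i.e.\ that $\mr{Gr}_\mb{e}(M)^{\chi(\C^*)}=\mr{Gr}_\mb{e}(M)^{T}$. The inclusion $\mr{Gr}_\mb{e}(M)^{T}\subseteq\mr{Gr}_\mb{e}(M)^{\chi(\C^*)}$ is automatic. Conversely, by the previous step $\chi(\C^*)$ acts as the $\C^*$-action of an attractive, hence well behaved, grading, so Theorem~\ref{trm:Fixed-Points} identifies its fixed points with the coordinate subrepresentations; these are stable under the diagonal $T$-action and therefore $T$-fixed, giving the reverse inclusion. The main obstacle is precisely the middle step: a cocharacter can only reproduce weights that are affine-linear in $p$ with a segment-independent slope, so an attractive grading with vertex-dependent increments $d(i)$ cannot be realised by any $\chi$; the content of the proposition is the passage to (or choice of) a uniform-increment attractive grading whose weights lie in the lattice spanned by the $T$-weights $p\tau_0+\tau_j$.
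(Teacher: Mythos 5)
Your constructive steps are the same as the paper's: the published proof is a one-line reference to \cite[Theorem~5.12]{LaPu2020}, where the cocharacter is read off from the weights of one explicit attractive grading exactly as in your middle step (that grading has a segment-independent increment, so $c_0$ is its slope and $c_j=\mr{wt}(b_{j,0})$), and your genericity argument via Theorem~\ref{trm:Fixed-Points} together with the $T$-stability of coordinate subrepresentations is the intended one. The problem lies in your final paragraph, i.e.\ in how you treat gradings whose increments $d(i)$ depend on the vertex.

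The gap is that you test whether the two actions coincide on $\bigoplus_{i}M^{(i)}$, whereas the proposition only asserts that they coincide on $\mr{Gr}_\mb{e}(M)$. An operator acting on each $M^{(i)}$ by a scalar acts trivially on each Grassmannian $\mr{Gr}_{e_i}(M^{(i)})$, hence on $\mr{Gr}_\mb{e}(M)$; consequently the cocharacter weights need to reproduce $\mr{wt}(b_{j,p})$ only up to a constant depending on the vertex, i.e.\ one needs integers $c_0,c_j,(m_i)_{i\in\Z_n}$ with $\mr{wt}(b_{j,p})=pc_0+c_j+m_{i_j+p}$, where $i_j$ is the initial vertex of $U_j$. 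Telescoping along the segments, this amounts to solving $m_{i+1}-m_i=d(i)-c_0$ over every vertex of $\Delta_n$ supporting arrows of $Q(M,B)$, which is possible whenever $nc_0=\sum_{i\in\Z_n}d(i)$ has an integer solution $c_0$ (and for arbitrary $c_0$ if some arrow of $\Delta_n$ carries no arrow of $Q(M,B)$). Equivalently: subtracting $m_i$ from the weights of $B^{(i)}$ produces a new attractive grading with uniform increment that induces the \emph{same} $\C^*$-action on $\mr{Gr}_\mb{e}(M)$, to which your argument then applies verbatim. So your claim that a grading with vertex-dependent increments ``cannot be realised by any $\chi$'' does not follow from the weight computation, and your restriction to uniform-increment gradings gives away cases the statement does cover. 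What survives of your objection is an integrality caveat: if $n\nmid\sum_i d(i)$, no integral cocharacter works even on the variety --- e.g.\ for $\Delta_2$, $M=U_1(2)^{\oplus 2}\oplus U_2(2)^{\oplus 2}$ and $\mb{e}=(2,2)$, one-parameter families of subrepresentations mixing summands with different initial vertices force $2c_0=d(1)+d(2)$, which an attractive grading may make odd --- so in that case ``coincides'' can only hold after replacing $\mb{wt}$ by $n\,\mb{wt}$, a reparametrisation $z\mapsto z^n$ of the acting $\C^*$ that changes no fixed points, orbits or limits, hence none of the GKM data. The paper's one-line proof is silent on this point as well.
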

\begin{proof}
Let $\C^*$ act on $\mr{Gr}_\mb{e}(M)$ with the weights of the attractive grading. The corresponding cocharacter $\chi$ is constructed analogous to the proof of \cite[Theorem~5.12]{LaPu2020}, where it is constructed for one explicit attractive grading. 
\end{proof}

\subsection{Quiver Grassmannians of Nilpotent Representations of the Cycle are GKM-Varieties}\label{subsec:Quiver-GKM-Theorem}
As promised, we recall here that quiver Grassmannians for nilpotent $\Delta_n$-representations admit a structure of BB-filterable GKM-varieties.
\begin{defi}\label{def:rational-cell}
We say that $W_i$ from \eqref{eqn:BBdecomposition} is a \f{rational cell} if it is rationally smooth at all $w\in W_i$. This in turn holds if 
\[H^{2\textrm{dim}_{\C}(W_i)}(W_i, W_i\setminus \{w\})\simeq \Q  \quad\hbox{ and  }\quad  H^m(W_i,W_i\setminus\{w\})=0 \]
for any $m\neq 2 \textrm{dim}_{\C}(W_i)$. 
\end{defi}
\begin{defi}\label{def:BB-filterable}
A projective $T$-variety $X$ is \f{BB-filterable} if: 
\begin{enumerate}
\item[(BB1)] the fixed point set $X^T$ is finite,
\item[(BB2)] there exists a generic cocharacter $\chi: \C^* \rightarrow T$, i.e. $X^{\chi(\C^*)} = X^T$, such that 
 the associated BB-decomposition consists of rational cells. 
\end{enumerate}
\end{defi}
\begin{trm}\label{trm:attr-forests-are-GKM}(\cite[Theorem~6.5]{LaPu2020}) Let $M \in \mr{rep}_\C(\Delta_n)$ be nilpotent with $d$-many indecomposable summands, and let $T:= (\C^*)^{d+1}$ act on $\mr{Gr}_\mb{e}(M)$ as in Lemma~\ref{lma:T-action-extends-to-quiver-Grass}. 
Then $(\mr{Gr}_\mb{e}(M),T)$ is a projective BB-filterable GKM-variety.
\end{trm}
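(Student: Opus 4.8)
The plan is to reduce the entire statement to the cellular decomposition furnished by Theorem~\ref{trm:cell_decomp-forests}, combined with the compatibility of the attractive $\C^*$-action with the $T$-action established in Proposition~\ref{prop:generic-cochar}. Projectivity is immediate: $\mr{Gr}_\mb{e}(M)$ is the closed subvariety of the product of ordinary Grassmannians $\prod_{i\in\Z_n}\mathrm{Gr}(e_i,M^{(i)})$ cut out by the incidence conditions $M_a U^{(s_a)}\subseteq U^{(t_a)}$. So the real content is to verify (BB1)--(BB2) of Definition~\ref{def:BB-filterable} and the two defining properties of a GKM-variety, namely that the $T$-action is skeletal and that the rational cohomology is concentrated in even degrees.

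\textbf{Fixed points and (BB1).} First I would fix, using Lemma~\ref{lma:2s-free-alignable} and Remark~\ref{rem:alignability}, a basis $B$ whose coefficient quiver $Q(M,B)$ is aligned (with connected components matching the indecomposable summands, as in Remark~\ref{rem:Segments-to-Indec}), and endow it with an attractive grading (Proposition~\ref{prop:attractive-grading}). With this basis and grading $M$ is well behaved, since (AG1) forces (D1) and (AG2) forces (D2). The grading induces a $\C^*$-action on $\mr{Gr}_\mb{e}(M)$, and by Proposition~\ref{prop:generic-cochar} there is a cocharacter $\chi\colon\C^*\to T$, generic for the $T$-action, whose composition with the $T$-action recovers this $\C^*$-action. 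Genericity gives $\mr{Gr}_\mb{e}(M)^T=\mr{Gr}_\mb{e}(M)^{\chi(\C^*)}$, and the right-hand side is exactly the $\C^*$-fixed locus for the grading, which by Theorem~\ref{trm:Fixed-Points} (equivalently Corollary~\ref{cor:Fixed-Points-as-suc-closed-sub-quiv}) is the finite set of coordinate subrepresentations. This establishes (BB1) and the finiteness of $T$-fixed points.

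\textbf{Rational cells, (BB2) and even cohomology.} Next I would invoke Theorem~\ref{trm:cell_decomp-forests} for the chosen aligned quiver and attractive grading: the BB-decomposition of $\mr{Gr}_\mb{e}(M)$ associated with $\chi$ has every attracting set $W_L$ isomorphic to an affine space. Affine spaces are smooth, hence rationally smooth, so each $W_L$ is a rational cell in the sense of Definition~\ref{def:rational-cell}; this gives (BB2), so $\mr{Gr}_\mb{e}(M)$ is BB-filterable. Moreover, by Theorem~\ref{trm:cell_decomp-forests} this is a genuine affine paving, and such a paving forces the rational cohomology of $\mr{Gr}_\mb{e}(M)$ to be concentrated in even degrees, with a basis indexed by the cells. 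In particular the odd rational cohomology vanishes, as required for a GKM-variety.

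\textbf{Main obstacle: finiteness of one-dimensional orbits.} The remaining half of skeletality is the crux. The plan here is local: the closure of each one-dimensional $T$-orbit $E$ meets $\mr{Gr}_\mb{e}(M)^T$ in two fixed points, so it suffices to bound, for each fixed point $L$, the one-dimensional orbits $E$ with $L\in\overline{E}$. The tangent line $T_L\overline{E}\subseteq T_L\,\mr{Gr}_\mb{e}(M)$ is a $T$-stable line carrying a nonzero character, so finiteness follows as soon as the $T$-weights occurring in the tangent space $T_L\,\mr{Gr}_\mb{e}(M)$ are pairwise distinct, each weight line then supporting at most one such orbit by local linearisation. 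Proving this multiplicity-one statement is the hard part, and it is precisely where the explicit form of the $T$-action from Lemma~\ref{lma:T-action-extends-to-quiver-Grass} and the alignment of $Q(M,B)$ enter: the characters of $T=(\C^*)^{d+1}$ on the Bialynicki--Birula coordinates around $L$ are read off from the quantities $\gamma_0^{p}\gamma_j$ attached to the basis vectors $b_{j,p}$, and one must check that alignment forces these to separate. Alternatively, one can sidestep this computation by appealing to Brion's criterion (cf.\ \cite{Brion2000} and the Remark after Definition~\ref{def:moment-graph}), according to which a projective BB-filterable $T$-variety whose cells are rational cells is automatically a GKM-variety; together with the preceding paragraphs this completes the proof.
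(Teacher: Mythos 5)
Your first three paragraphs are essentially fine: projectivity via the closed embedding into a product of Grassmannians, (BB1) via Proposition~\ref{prop:generic-cochar} together with Corollary~\ref{cor:Fixed-Points-as-suc-closed-sub-quiv}, and (BB2) plus odd-cohomology vanishing via the affine paving of Theorem~\ref{trm:cell_decomp-forests} all hold up (your observation that (AG1)--(AG2) imply (D1)--(D2) is also correct). Note, for context, that the paper itself gives no proof of this statement: it is imported wholesale from \cite[Theorem~6.5]{LaPu2020}, and the step you leave open below is precisely the actual content of that citation.

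The genuine gap is the one you flag yourself and then try to sidestep: finiteness of the one-dimensional $T$-orbits. Neither of your two routes closes it. The ``alternative'' criterion you invoke is not a theorem: projectivity, BB-filterability and rationality (even affineness) of the cells do not imply skeletality. For instance, let $\C^*$ act on $\mathbb{P}^2$ by $t\cdot[x:y:z]=[x:ty:t^2z]$ (extend by a trivial factor if you insist on a higher-rank torus): there are three fixed points, the BB-cells are affine spaces of dimensions $2$, $1$, $0$, and odd cohomology vanishes, yet the conics $\overline{\{[1:t:ct^2]\mid t\in\C^*\}}$, one for each $c\in\C^*$, form infinitely many distinct one-dimensional orbits. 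The remark after Definition~\ref{def:moment-graph}, and \cite[Lemma~2]{Brion2000} behind it, only assert that the paper's definition of GKM-variety agrees with \cite[Definition~1.4]{LaPu2020}; skeletality is a hypothesis in both definitions, never a conclusion. Your first route has the right flavour but is also insufficient as stated: pairwise \emph{distinct} tangent weights do not give multiplicity one (the weights $\epsilon$ and $2\epsilon$ are distinct, yet the two-dimensional representation with these weights has infinitely many one-dimensional orbits), and ``local linearisation'' at a fixed point is not available for free, since $\mr{Gr}_\mb{e}(M)$ is in general singular. What is actually needed --- and what \cite{LaPu2020} proves, cf.\ Remark~\ref{rem:T-weights-one-dim-orbits} and Theorem~\ref{trm:comb-moment-graph} in the present paper --- is that each cell $W_L$ is $T$-stable and carries explicit coordinates on which $T$ acts linearly with characters of the form \eqref{eqn:Characters-in-moment-graph}; any two distinct characters of this form are automatically linearly \emph{independent}, so a point of a cell with at least two nonzero coordinates has orbit dimension at least two, and the one-dimensional orbits are exactly the punctured coordinate axes --- finitely many per cell, in bijection with mutations. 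Without this input (or a substitute for it), your proposal establishes BB-filterability and evenness of cohomology, but not the GKM property.
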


\subsection{Moment Graph}\label{subsec:Moment-Graph}
Let $M \in \mr{rep}_\C(\Delta_n)$ be nilpotent. Take a $\C^*$-action on $\mr{Gr}_\mb{e}(M)$ corresponding to an attractive grading on $Q(M,B)$ as in Definition~\ref{def:attractive-grading}.
\begin{defi}\label{defn:Mutation-Relation}Let $S \in \mr{SC}_\mb{e}^{\Delta_n}(M)$ parametrise a $\C^*$-fixed point as in Corollary~\ref{cor:Fixed-Points-as-suc-closed-sub-quiv}. A connected predecessor closed subquiver of $S$ is called {\bf  movable part}. We say $S,H \in \mr{SC}_\mb{e}^{\Delta_n}(M)$ are \f{mutation related} if they differ by the position of exactly one movable part. 
\end{defi}
The $\C^*$-weights naturally provide an orientation of the mutation relations:
\begin{defi}\label{defn:FundMutation}
For a fixed $\C^*$-action, we order the vertices in the coefficient quiver of $M$ increasingly by their weight. If for mutation related $S,H \in \mr{SC}_\mb{e}^{\Delta_n}(M)$, the indices of the vertices on the movable part as subset of $H \subset Q(M,B)$ are larger than for $S$ we say that $H$ is obtained from $S$ by a \f{fundamental mutation}.
\end{defi}

For $(\gamma)=(\gamma_0, \gamma_1, \ldots, \gamma_{d})\in T$, we define 
\[
\epsilon_j:T\rightarrow \C^*, \quad (\gamma_0,\gamma_1, \ldots, \gamma_{d})\mapsto \gamma_j \qquad (j \in [d])
\] 
and 
\[
\delta:T\rightarrow \C^*, \quad (\gamma_0,\gamma_1, \ldots, \gamma_{d})\mapsto \gamma_0.
\] 
\begin{rem}\label{rem:T-weights-one-dim-orbits}
The $T$-weight of a mutation $\mu$ is computed as the weight difference of the terminal vertex of the moved subsegment before and after the movement: By construction of the $T$-action, all other points of the moved subsegment have the same $T$-weight difference. Let $p \in B$ be the terminal vertex before the mutation and $p' =\mu(p) \in B$ the terminal vertex after the mutation. From the definition of the mutations it follows that there are unique $j_s$ and $j_t$ in $[d]$ with $j_s < j_t$ such that $p$ lives on the indecomposable summand $U_{j_s}$ and $p'$ lives on indecomposable summand $U_{j_t}$ of $M$, i.e. $p=b_{j_s,k}$ and $p=b_{j_t,k'}$ for some $k \in [\ell_{j_s}]$ and $k' \in [\ell_{j_t}]$. 

By \cite[Remark~6.10]{LaPu2020} and the proof of \cite[Theorem~6.13]{LaPu2020}, every mutation relation is identified with a unique one-dimensional $T$-orbit. Observe that this statement is independent of the cocharacter $\chi$ since it does not depend on the orientation of the corresponding edge in the moment graph. In particular, it does not require the orientation used in \cite{LaPu2020}. 

The orientation of the edges in the moment graph is obtained from the cocharacter $\chi$ corresponding to the attractive grading, as described in (MG2) of Definition~\ref{def:moment-graph}. By construction this coincides with the orientation of the mutation relations. The $T$-weight of the mutation $\mu$ is the character of the corresponding one-dimensional $T$-orbit and equals
\begin{equation}\label{eqn:Characters-in-moment-graph}
\alpha_\mu := \epsilon_{j_t}-\epsilon_{j_s}+  \big(k'-k\big)\delta.
\end{equation}
\end{rem}

\begin{trm}\label{trm:comb-moment-graph}
Let $M \in \mr{rep}_\C(\Delta_n)$ be nilpotent with $d$-many indecomposable summands and attractive grading and let $X:=\mr{Gr}_\mb{e}(M)$. Let $T:= (\C^*)^{d+1}$ act on $\mr{Gr}_\mb{e}(M)$ as in Proposition~\ref{prop:generic-cochar} and let $\chi$ be the corresponding cocharacter of the attractive grading. The vertices of the moment graph $\mathcal{G}(X,T,\chi)$ are in bijection with the successor closed subquivers in $SC_\mb{e}^{\Delta_n}(M)$. For $S,H \in SC_\mb{e}^{\Delta_n}(M)$ there exists an arrow in the moment graph from $S$ to $H$ if and only if there exists a fundamental mutation $\mu(S) =H$. The label of this edge is given by $\alpha_\mu$ as in (\ref{eqn:Characters-in-moment-graph}).
\end{trm}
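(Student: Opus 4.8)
Theorem \ref{trm:comb-moment-graph} asks for a complete combinatorial description of the moment graph $\mathcal{G}(X,T,\chi)$: its vertices are the successor-closed subquivers $SC_\mb{e}^{\Delta_n}(M)$, its edges are the fundamental mutations, and the edge labels are the characters $\alpha_\mu$ from \eqref{eqn:Characters-in-moment-graph}. The plan is to verify the three defining data (MG0), (MG1), (MG2) of Definition \ref{def:moment-graph} separately, drawing on the results already assembled in the excerpt.

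=== PROOF PROPOSAL ===

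\begin{proof}[Proof plan]
The statement breaks into the three defining data (MG0), (MG1), (MG2) of a moment graph from Definition~\ref{def:moment-graph}, and the plan is to establish each in turn.

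For the vertices (MG0), I would simply combine the fixed-point description with genericity. By (MG0) the vertices are the $T$-fixed points $X^T$. Since $T=(\C^*)^{d+1}$ contains the one-parameter subgroup $\chi(\C^*)$ and $\chi$ is a generic cocharacter of the attractive grading, Theorem~\ref{trm:attr-forests-are-GKM} together with Proposition~\ref{prop:generic-cochar} gives $X^T=X^{\chi(\C^*)}=\mr{Gr}_\mb{e}(M)^{\C^*}$. Corollary~\ref{cor:Fixed-Points-as-suc-closed-sub-quiv} then identifies this set bijectively with $SC_\mb{e}^{\Delta_n}(M)$. The only point to check is that no one-dimensional $T$-orbit direction is fixed under $\chi$, so that passing from the big torus to the one-parameter subgroup does not merge distinct fixed points; this is exactly the content of genericity, which Proposition~\ref{prop:generic-cochar} supplies.

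For the edges (MG1), the heart of the argument is the bijection between one-dimensional $T$-orbits and fundamental mutations. By (MG1), edges are closures of one-dimensional $T$-orbits, oriented by the limit behaviour of $\chi$. Remark~\ref{rem:T-weights-one-dim-orbits} already records, citing \cite[Remark~6.10]{LaPu2020} and the proof of \cite[Theorem~6.13]{LaPu2020}, that every mutation relation corresponds to a unique one-dimensional $T$-orbit and conversely; crucially it notes this correspondence is \emph{independent of} the choice of cocharacter, since it concerns the unoriented edge. So for the underlying graph I would invoke this directly. What remains, and what is genuinely new here compared to \cite{LaPu2020}, is matching the \emph{orientation}: I must show that the orientation prescribed by $\chi$ in (MG1) (point towards the $\lim_{\lambda\to 0}\chi(\lambda).p$ endpoint) coincides with the orientation of a fundamental mutation from Definition~\ref{defn:FundMutation} (moving a movable part to larger-index vertices). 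Since $\chi$ is the cocharacter attached to the attractive grading and by (AG1) weights increase with vertex index, the attracting point of a one-dimensional orbit is the one with smaller weights, i.e. the source $S$ before mutation, and the target is $H=\mu(S)$ with the movable part at larger indices. This is where I expect the main obstacle: one must carefully trace through the explicit local coordinates on the one-dimensional orbit (of the kind displayed in Remark~\ref{rem:bad-alignment}) to confirm that the $\C^*$-limit $\lim_{\lambda\to 0}$ lands on the low-weight fixed point, and that this is a robust consequence of (AG1)–(AG2) rather than of the particular grading chosen in \cite{LaPu2020}.

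For the labels (MG2), I would compute the $T$-character governing the action on a one-dimensional orbit directly from the $T$-action on basis vectors, $\gamma.b_{j,p}=\gamma_0^{\,p}\gamma_j\cdot b_{j,p}$. Following Remark~\ref{rem:T-weights-one-dim-orbits}, the weight of the mutation $\mu$ is the $T$-weight difference of the terminal vertex $p=b_{j_s,k}$ before and $p'=b_{j_t,k'}$ after the move, all other moved vertices contributing the same difference by construction of the action. Reading off the exponents gives the $\epsilon_{j_t}-\epsilon_{j_s}$ contribution from the indecomposable labels and the $(k'-k)\delta$ contribution from the powers of $\gamma_0$, yielding exactly $\alpha_\mu$ as in \eqref{eqn:Characters-in-moment-graph}. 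By Remark~\ref{rem:torus-characters-part-i} the label is only defined up to sign, so no orientation subtlety enters here. Assembling (MG0), (MG1) and (MG2) completes the identification of $\mathcal{G}(X,T,\chi)$.
\end{proof}
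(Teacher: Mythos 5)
Your proposal is correct and follows essentially the same route as the paper's (very terse) proof: the vertices via Proposition~\ref{prop:generic-cochar} combined with Corollary~\ref{cor:Fixed-Points-as-suc-closed-sub-quiv}, and the edges and labels via Remark~\ref{rem:T-weights-one-dim-orbits} (which rests on the cited results of \cite{LaPu2020}). The only difference is that you flag the matching of the $\chi$-orientation with the fundamental-mutation orientation as a point needing explicit verification, whereas the paper (in Remark~\ref{rem:T-weights-one-dim-orbits}) disposes of it with ``by construction''; your sketch of that verification via (AG1) is the right one.
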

\begin{proof}
The $T$-fixed points and the $\C^*$-fixed points coincide by Proposition~\ref{prop:generic-cochar}. Hence the description of the $T$-fixed points follows by Corollary~\ref{cor:Fixed-Points-as-suc-closed-sub-quiv}. The parametrisation of the edges and their labels are obtained as in Remark~\ref{rem:T-weights-one-dim-orbits}. 
\end{proof}
\begin{cor}\label{cor:mutations-provide-acyclic-orientation}
Fundamental mutations induce a partial order on $SC_\mb{e}^Q(M)$.
\end{cor}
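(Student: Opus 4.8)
The plan is to realise the covering relation ``$S\to H$ if there is a fundamental mutation $\mu(S)=H$'' as the edge set of a finite directed graph on the vertex set $SC_\mb{e}^{\Delta_n}(M)$, and to prove that this graph is \emph{acyclic}; the reflexive--transitive closure of an acyclic relation on a finite set is automatically a partial order. To establish acyclicity I would exhibit a strictly monotone height function. Using the attractive grading $\mb{wt}\in\Z^B$ underlying the $\C^*$-action, set, for each $S\in SC_\mb{e}^{\Delta_n}(M)$ identified with its set of basis vectors,
\[
h(S):=\sum_{b\in S}\mr{wt}(b)\in\Z .
\]
Equivalently, $h(S)$ records the $\chi$-weight by which $\C^*$ acts at the fixed point $S$, so that $h$ is nothing but the ``level'' of $S$ in the moment graph of Theorem~\ref{trm:comb-moment-graph}.

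The key step is to show that a fundamental mutation strictly raises $h$. If $H$ is obtained from $S$ by moving a movable part $P$, then by Definition~\ref{defn:Mutation-Relation} the two configurations agree away from $P$, so that $h(H)-h(S)$ only records the change of weight of the vertices carried by $P$. By Remark~\ref{rem:T-weights-one-dim-orbits} every vertex of $P$ is shifted by one and the same $T$-weight, hence by one and the same $\C^*$-weight $c$ once we compose with $\chi$; and by Definition~\ref{defn:FundMutation} a fundamental mutation is exactly the one moving $P$ to vertices of strictly larger index, i.e. (as indices are assigned by increasing weight, and by (AG1)--(AG2) of Definition~\ref{def:attractive-grading}) of strictly larger weight, so $c>0$. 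Writing $m$ for the (move-invariant) number of vertices of $P$, this gives
\[
h(H)-h(S)=m\,c>0 .
\]

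Granting this, acyclicity is immediate: along any directed path of fundamental mutations $h$ strictly increases, so no such path can close up into a cycle. Defining $S\preceq H$ to mean $S=H$ or that $H$ is reachable from $S$ by a finite chain of fundamental mutations, reflexivity and transitivity hold by construction, while antisymmetry follows since $S\preceq H\preceq S$ with $S\neq H$ would produce a directed cycle and hence the contradiction $h(S)<h(S)$. I expect the only genuine obstacle to be the monotonicity step, precisely the claim that ``the movable part moves to larger indices'' translates into a strictly positive change of the additive height $h$: this is where the uniform-shift statement of Remark~\ref{rem:T-weights-one-dim-orbits} together with the compatibility (AG1)--(AG2) of the grading is essential, in particular to rule out that weight gains and losses over different vertices $i\in\Z_n$ could cancel.
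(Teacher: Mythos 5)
Your proof is correct and follows essentially the same route as the paper: the paper's (very terse) argument is that the orientation of the mutation relations forbids oriented cycles, which is exactly your acyclicity claim, and your height function $h(S)=\sum_{b\in S}\mr{wt}(b)$ is just an explicit strictly increasing quantity making that one-line assertion precise. The monotonicity step you flag as the only delicate point is indeed the content hidden in the paper's word ``immediately,'' and your use of Definition~\ref{defn:FundMutation} together with the uniform weight shift from Remark~\ref{rem:T-weights-one-dim-orbits} settles it correctly.
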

\begin{proof}
It follows immediately from the orientation of the mutation relations that no sequence of fundamental mutations can build an oriented cycle in the moment graph.
\end{proof}
\begin{ex}\label{ex:not-P-S-preperation}
The quiver Grassmannian $\mr{Gr}_4(M)$ for the loop quiver $\Delta_1$, with  $M = A_4 \oplus A_2 \oplus A_2$ where $A_N \cong \C[t]/(t^N)$, together with the action by $T=(\C^*)^{3+1}$ as described in Lemma~\ref{lma:T-action-extends-to-quiver-Grass} is a BB-filterable GKM-variety. We set $U_1=A_4$, $U_2=A_2$ and $U_3=A_2$. Then we consider the grading induced by $\mr{wt}(a) = 3$, $\mr{wt}(b_{1,1})=1$, $\mr{wt}(b_{2,1})=8$ and $\mr{wt}(b_{3,1})=9$. Let $\chi$ be the corresponding cocharacter of $T$ as in Proposition~\ref{prop:generic-cochar}. Hence we can apply Theorem~\ref{trm:comb-moment-graph} to compute its moment graph $\mathcal{G}(X,T,\chi)$. There are $9$ $T$-fixed points:
\begin{center}
\begin{tikzpicture}[scale=0.4]
\node at (-2.4,4.5) {$L_1 = $};
\draw[fill=white] (0,7) circle (.12);
\draw[fill=white] (0,6) circle (.12);
\draw[fill=white] (0,5) circle (.12);
\draw[fill=white] (0,4) circle (.12);
\draw[fill=black] (0,3) circle (.12);
\draw[fill=black] (0,2) circle (.12);
\draw[fill=black] (0,1) circle (.12);
\draw[fill=black] (0,0) circle (.12);

 \def\centerarc[#1](#2)(#3:#4:#5);%
    {
    \draw[#1]([shift=(#3:#5)]#2) arc (#3:#4:#5);
    }
\centerarc[arrows={-angle 90}](0,6.5)(76:-76:0.5cm); 
\centerarc[arrows={-angle 90}](0,5.5)(180-76:180+76:0.5cm); 
\centerarc[arrows={-angle 90}](0,3.5)(85:-85:1.5cm); 
\centerarc[arrows={-angle 90}](0,2.5)(180-85:180+85:1.5cm); 
\centerarc[arrows={-angle 90}](0,1.5)(180-85:180+85:1.5cm); 
\end{tikzpicture}
\begin{tikzpicture}[scale=0.4]
\node at (-2.4,4.5) {$L_2 = $};
\draw[fill=white] (0,7) circle (.12);
\draw[fill=white] (0,6) circle (.12);
\draw[fill=white] (0,5) circle (.12);
\draw[fill=black] (0,4) circle (.12);
\draw[fill=white] (0,3) circle (.12);
\draw[fill=black] (0,2) circle (.12);
\draw[fill=black] (0,1) circle (.12);
\draw[fill=black] (0,0) circle (.12);

 \def\centerarc[#1](#2)(#3:#4:#5);%
    {
    \draw[#1]([shift=(#3:#5)]#2) arc (#3:#4:#5);
    }
\centerarc[arrows={-angle 90}](0,6.5)(76:-76:0.5cm); 
\centerarc[arrows={-angle 90}](0,5.5)(180-76:180+76:0.5cm); 
\centerarc[arrows={-angle 90}](0,3.5)(85:-85:1.5cm); 
\centerarc[arrows={-angle 90}](0,2.5)(180-85:180+85:1.5cm); 
\centerarc[arrows={-angle 90}](0,1.5)(180-85:180+85:1.5cm); 
\end{tikzpicture}
\begin{tikzpicture}[scale=0.4]
\node at (-2.4,4.5) {$L_3 = $};
\draw[fill=white] (0,7) circle (.12);
\draw[fill=white] (0,6) circle (.12);
\draw[fill=white] (0,5) circle (.12);
\draw[fill=black] (0,4) circle (.12);
\draw[fill=black] (0,3) circle (.12);
\draw[fill=white] (0,2) circle (.12);
\draw[fill=black] (0,1) circle (.12);
\draw[fill=black] (0,0) circle (.12);

 \def\centerarc[#1](#2)(#3:#4:#5);%
    {
    \draw[#1]([shift=(#3:#5)]#2) arc (#3:#4:#5);
    }
\centerarc[arrows={-angle 90}](0,6.5)(76:-76:0.5cm); 
\centerarc[arrows={-angle 90}](0,5.5)(180-76:180+76:0.5cm); 
\centerarc[arrows={-angle 90}](0,3.5)(85:-85:1.5cm); 
\centerarc[arrows={-angle 90}](0,2.5)(180-85:180+85:1.5cm); 
\centerarc[arrows={-angle 90}](0,1.5)(180-85:180+85:1.5cm); 
\end{tikzpicture}
\begin{tikzpicture}[scale=0.4]
\node at (-2.4,4.5) {$L_4 = $};
\draw[fill=white] (0,7) circle (.12);
\draw[fill=white] (0,6) circle (.12);
\draw[fill=black] (0,5) circle (.12);
\draw[fill=white] (0,4) circle (.12);
\draw[fill=white] (0,3) circle (.12);
\draw[fill=black] (0,2) circle (.12);
\draw[fill=black] (0,1) circle (.12);
\draw[fill=black] (0,0) circle (.12);

 \def\centerarc[#1](#2)(#3:#4:#5);%
    {
    \draw[#1]([shift=(#3:#5)]#2) arc (#3:#4:#5);
    }
\centerarc[arrows={-angle 90}](0,6.5)(76:-76:0.5cm); 
\centerarc[arrows={-angle 90}](0,5.5)(180-76:180+76:0.5cm); 
\centerarc[arrows={-angle 90}](0,3.5)(85:-85:1.5cm); 
\centerarc[arrows={-angle 90}](0,2.5)(180-85:180+85:1.5cm); 
\centerarc[arrows={-angle 90}](0,1.5)(180-85:180+85:1.5cm); 
\end{tikzpicture}

\begin{tikzpicture}[scale=0.4]
\node at (-2.4,4.5) {$L_5 = $};
\draw[fill=white] (0,7) circle (.12);
\draw[fill=white] (0,6) circle (.12);
\draw[fill=black] (0,5) circle (.12);
\draw[fill=white] (0,4) circle (.12);
\draw[fill=black] (0,3) circle (.12);
\draw[fill=black] (0,2) circle (.12);
\draw[fill=white] (0,1) circle (.12);
\draw[fill=black] (0,0) circle (.12);

 \def\centerarc[#1](#2)(#3:#4:#5);%
    {
    \draw[#1]([shift=(#3:#5)]#2) arc (#3:#4:#5);
    }
\centerarc[arrows={-angle 90}](0,6.5)(76:-76:0.5cm); 
\centerarc[arrows={-angle 90}](0,5.5)(180-76:180+76:0.5cm); 
\centerarc[arrows={-angle 90}](0,3.5)(85:-85:1.5cm); 
\centerarc[arrows={-angle 90}](0,2.5)(180-85:180+85:1.5cm); 
\centerarc[arrows={-angle 90}](0,1.5)(180-85:180+85:1.5cm); 
\end{tikzpicture}
\begin{tikzpicture}[scale=0.4]
\node at (-2.4,4.5) {$L_6 = $};
\draw[fill=white] (0,7) circle (.12);
\draw[fill=black] (0,6) circle (.12);
\draw[fill=black] (0,5) circle (.12);
\draw[fill=white] (0,4) circle (.12);
\draw[fill=white] (0,3) circle (.12);
\draw[fill=black] (0,2) circle (.12);
\draw[fill=white] (0,1) circle (.12);
\draw[fill=black] (0,0) circle (.12);

 \def\centerarc[#1](#2)(#3:#4:#5);%
    {
    \draw[#1]([shift=(#3:#5)]#2) arc (#3:#4:#5);
    }
\centerarc[arrows={-angle 90}](0,6.5)(76:-76:0.5cm); 
\centerarc[arrows={-angle 90}](0,5.5)(180-76:180+76:0.5cm); 
\centerarc[arrows={-angle 90}](0,3.5)(85:-85:1.5cm); 
\centerarc[arrows={-angle 90}](0,2.5)(180-85:180+85:1.5cm); 
\centerarc[arrows={-angle 90}](0,1.5)(180-85:180+85:1.5cm); 
\end{tikzpicture}
\begin{tikzpicture}[scale=0.4]
\node at (-2.4,4.5) {$L_7 = $};
\draw[fill=white] (0,7) circle (.12);
\draw[fill=white] (0,6) circle (.12);
\draw[fill=black] (0,5) circle (.12);
\draw[fill=black] (0,4) circle (.12);
\draw[fill=white] (0,3) circle (.12);
\draw[fill=black] (0,2) circle (.12);
\draw[fill=black] (0,1) circle (.12);
\draw[fill=white] (0,0) circle (.12);

 \def\centerarc[#1](#2)(#3:#4:#5);%
    {
    \draw[#1]([shift=(#3:#5)]#2) arc (#3:#4:#5);
    }
\centerarc[arrows={-angle 90}](0,6.5)(76:-76:0.5cm); 
\centerarc[arrows={-angle 90}](0,5.5)(180-76:180+76:0.5cm); 
\centerarc[arrows={-angle 90}](0,3.5)(85:-85:1.5cm); 
\centerarc[arrows={-angle 90}](0,2.5)(180-85:180+85:1.5cm); 
\centerarc[arrows={-angle 90}](0,1.5)(180-85:180+85:1.5cm); 
\end{tikzpicture}
\begin{tikzpicture}[scale=0.4]
\node at (-2.4,4.5) {$L_8 = $};
\draw[fill=white] (0,7) circle (.12);
\draw[fill=black] (0,6) circle (.12);
\draw[fill=black] (0,5) circle (.12);
\draw[fill=white] (0,4) circle (.12);
\draw[fill=white] (0,3) circle (.12);
\draw[fill=black] (0,2) circle (.12);
\draw[fill=black] (0,1) circle (.12);
\draw[fill=white] (0,0) circle (.12);

 \def\centerarc[#1](#2)(#3:#4:#5);%
    {
    \draw[#1]([shift=(#3:#5)]#2) arc (#3:#4:#5);
    }
\centerarc[arrows={-angle 90}](0,6.5)(76:-76:0.5cm); 
\centerarc[arrows={-angle 90}](0,5.5)(180-76:180+76:0.5cm); 
\centerarc[arrows={-angle 90}](0,3.5)(85:-85:1.5cm); 
\centerarc[arrows={-angle 90}](0,2.5)(180-85:180+85:1.5cm); 
\centerarc[arrows={-angle 90}](0,1.5)(180-85:180+85:1.5cm); 
\end{tikzpicture}
\begin{tikzpicture}[scale=0.4]
\node at (-2.4,4.5) {$L_9 = $};
\draw[fill=black] (0,7) circle (.12);
\draw[fill=black] (0,6) circle (.12);
\draw[fill=black] (0,5) circle (.12);
\draw[fill=white] (0,4) circle (.12);
\draw[fill=white] (0,3) circle (.12);
\draw[fill=black] (0,2) circle (.12);
\draw[fill=white] (0,1) circle (.12);
\draw[fill=white] (0,0) circle (.12);

 \def\centerarc[#1](#2)(#3:#4:#5);%
    {
    \draw[#1]([shift=(#3:#5)]#2) arc (#3:#4:#5);
    }
\centerarc[arrows={-angle 90}](0,6.5)(76:-76:0.5cm); 
\centerarc[arrows={-angle 90}](0,5.5)(180-76:180+76:0.5cm); 
\centerarc[arrows={-angle 90}](0,3.5)(85:-85:1.5cm); 
\centerarc[arrows={-angle 90}](0,2.5)(180-85:180+85:1.5cm); 
\centerarc[arrows={-angle 90}](0,1.5)(180-85:180+85:1.5cm); 
\end{tikzpicture}
\end{center}
The unlabelled moment graph computes as:
\begin{center}
\begin{tikzpicture}[scale=0.79]
\node at (0,0) {$L_1$};
\node at (-1*3,0) {$L_2$};
\node at (-2*3,3) {$L_3$};
\node at (-2*3,-2) {$L_4$};
\node at (-3*3,3) {$L_5$};
\node at (-3*3,-2) {$L_6$};
\node at (-4*3,3) {$L_8$};
\node at (-4*3,0) {$L_7$};
\node at (-4*3,-2) {$L_9$};

 \def\centerarc[#1](#2)(#3:#4:#5);%
    {
    \draw[#1]([shift=(#3:#5)]#2) arc (#3:#4:#5);
    }
\centerarc[arrows={-angle 90}](-4*3,0.5)(262:98:2.5cm);

\draw[arrows={-angle 90}, shorten >=7, shorten <=7]  (-4*3,-2) -- (-4*3,0); 
\centerarc[arrows={-angle 90}](-4*3,0.5)(262:98:2.5cm); 
\draw[arrows={-angle 90}, shorten >=7, shorten <=7]  (-4*3,-2) -- (-3*3,-2); 
\draw[arrows={-angle 90}, shorten >=7, shorten <=7]  (-4*3,-2) -- (-3*3,3); 

\draw[arrows={-angle 90}, shorten >=7, shorten <=7]  (-4*3,3) -- (-4*3,0); 
\draw[arrows={-angle 90}, shorten >=7, shorten <=7]  (-4*3,3) -- (-3*3,-2); 
\draw[arrows={-angle 90}, shorten >=7, shorten <=7]  (-4*3,3) -- (-2*3,-2); 
\draw[arrows={-angle 90}, shorten >=7, shorten <=7]  (-4*3,3) -- (0,0); 

\draw[arrows={-angle 90}, shorten >=7, shorten <=7]  (-4*3,0) -- (-3*3,3); 
\draw[arrows={-angle 90}, shorten >=7, shorten <=7]  (-4*3,0) -- (-2*3,-2); 
\draw[arrows={-angle 90}, shorten >=7, shorten <=7]  (-4*3,0) -- (-2*3,3); 
\draw[arrows={-angle 90}, shorten >=7, shorten <=7]  (-4*3,0) -- (-1*3,0); 

\draw[arrows={-angle 90}, shorten >=7, shorten <=7]  (-3*3,-2) -- (-3*3,3); 
\draw[arrows={-angle 90}, shorten >=7, shorten <=7]  (-3*3,-2) -- (-2*3,-2); 
\draw[arrows={-angle 90}, shorten >=7, shorten <=7]  (-3*3,-2) -- (-1*3,0); 
\draw[arrows={-angle 90}, shorten >=7, shorten <=7]  (-3*3,3) -- (-2*3,-2); 
\draw[arrows={-angle 90}, shorten >=7, shorten <=7]  (-3*3,3) -- (-2*3,3); 
\draw[arrows={-angle 90}, shorten >=7, shorten <=7]  (-3*3,3) -- (0,0); 
\draw[arrows={-angle 90}, shorten >=7, shorten <=7]  (-2*3,-2) -- (-1*3,0); 
\draw[arrows={-angle 90}, shorten >=7, shorten <=7]  (-2*3,-2) -- (0,0); 
\draw[arrows={-angle 90}, shorten >=7, shorten <=7]  (-2*3,3) -- (-1*3,0); 
\draw[arrows={-angle 90}, shorten >=7, shorten <=7]  (-2*3,3) -- (0,0); 
\draw[arrows={-angle 90}, shorten >=7, shorten <=7]  (-1*3,0) -- (0,0); 

\end{tikzpicture}
\end{center}
The label of the edge $L_9 \to L_8$ is given by $\epsilon_2-\epsilon_1+\delta$: $T$ acts on the terminal vertex of the moved subtree by $\gamma_1$ if we consider it as point in $L_9$. For this mutation, the moved subtree only consists of this vertex. As point in $L_8$ we have an action by $\gamma_0\gamma_2$. Then (\ref{eqn:Characters-in-moment-graph}) implies that the edge weight is $\epsilon_2-\epsilon_1+\delta$. Analogously, we compute the weight of all other edges.
\end{ex}
\section{Combinatorial Basis for the T-equivariant Cohomology}\label{sec:comb-basis}
Let $(X,T)$ be a GKM-variety and let $\chi \in \mathfrak{X}_*(T)$ be a generic cocharacter. We write $x \succeq_\chi y$ if there exists a directed path (possibly of length zero) from $x$ to $y$ in the moment graph $\mathcal{G}(X,T,\chi)$. The following is shown in \cite[Section~5]{Tymoczko2005}.
\begin{prop}\label{prop:acyclic-orientation}
Let $(X,T)$ be a GKM-variety. Then there exists a generic cocharacter $\chi \in \mathfrak{X}_*(T)$ such that $\succeq_\chi$ is a partial order on $X^T$, that is $\mathcal{G}(X,T, \chi)$ is acyclic.
\end{prop}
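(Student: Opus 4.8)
The plan is to reduce acyclicity of $\mathcal{G}(X,T,\chi)$ to the existence of a \emph{height function} on the fixed points that is strictly monotone along every oriented edge. Observe first that $\succeq_\chi$ is automatically reflexive (paths of length zero) and transitive (concatenation of directed paths), so the only property at stake is antisymmetry, and this fails exactly when two distinct vertices lie on a common directed cycle. Hence it suffices to produce a generic $\chi$ for which $\mathcal{G}(X,T,\chi)$ has no directed cycle, and the cleanest way to forbid cycles is to exhibit a function $f_\chi\colon X^T\to\R$ whose value strictly changes (always in the same direction) across each oriented edge.

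Next I would pin down genericity combinatorially. Since the $T$-action is skeletal there are only finitely many one-dimensional orbits, hence finitely many nonzero edge labels $\alpha_E\in\mathfrak{X}^*(T)$. The conditions $\langle\chi,\alpha_E\rangle=0$ cut out finitely many proper hyperplanes in $\mathfrak{X}_*(T)\otimes_\Z\R$, so their complement is open and dense and therefore contains an integral cocharacter $\chi$ with $\langle\chi,\alpha_E\rangle\neq 0$ for all $E$. I would then check that this condition is precisely genericity: if $\langle\chi,\alpha_E\rangle=0$ the subtorus $\chi(\C^*)$ fixes all of $\overline{E}$ and produces fixed points outside $X^T$, while if $\langle\chi,\alpha_E\rangle\neq 0$ on every one-dimensional orbit then, by skeletality, $X^{\chi(\C^*)}$ can contain no curve beyond $X^T$, giving $X^{\chi(\C^*)}=X^T$.

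The heart of the argument is a local computation on each edge combined with a global potential coming from projectivity. On $\overline{E}\cong\C\Pbb^1$ the torus acts through $\alpha_E$ on the affine coordinate of $E$, so $\lim_{\lambda\to 0}\chi(\lambda).p$ is one fixed point or the other according to the sign of $\langle\chi,\alpha_E\rangle$; this is exactly the orientation rule (MG1). To globalise these signs I would fix a $T$-equivariant ample line bundle on the projective variety $X$ (equivalently a $T$-equivariant embedding $X\hookrightarrow\Pbb(V)$) and let $\nu_x\in\mathfrak{X}^*(T)$ be the weight of its fibre at the fixed point $x$. A standard computation on $\overline{E}$ shows that $\nu_{x_E}-\nu_{y_E}$ is an integer multiple of $\alpha_E$, whence, setting $f_\chi(x):=\langle\chi,\nu_x\rangle$, the two endpoints of any edge get distinct values and the BB-orientation of (MG1) always points in a fixed direction relative to $f_\chi$. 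Thus $f_\chi$ is strictly monotone along every oriented edge, strictly changes along any directed path, and no directed cycle can close up; this proves $\mathcal{G}(X,T,\chi)$ is acyclic and that $\succeq_\chi$ is a partial order.

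I expect the genuine obstacle to be the global coherence step rather than the local one. That each individual edge is oriented by the sign of $\langle\chi,\alpha_E\rangle$ is immediate, but ruling out cycles requires these signs to be simultaneously realised by a \emph{single} linear functional on $X^T$; the equivariant ample bundle (or, over $\C$, the moment map it induces) is exactly the device that straightens the edge labels into differences $\nu_{x_E}-\nu_{y_E}$ of potential values. Verifying the relation $\nu_{x_E}-\nu_{y_E}\in\Z\,\alpha_E$ and confirming that the resulting direction is consistent with the $\lim_{\lambda\to 0}$ convention of (MG1) is the only point needing genuine care (the overall sign of the direction is immaterial for acyclicity, since reversing $\chi$ merely reverses all edges). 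As this is the content of \cite[Section~5]{Tymoczko2005}, I would either reproduce that argument or invoke it directly.
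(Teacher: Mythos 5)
Your proposal is correct and coincides with the paper's approach: the paper gives no argument of its own but simply cites \cite[Section~5]{Tymoczko2005}, and the argument you reconstruct---choosing $\chi$ off the finitely many hyperplanes $\langle\chi,\alpha_E\rangle=0$, and using the weights $\nu_x$ of a $T$-equivariant projective embedding to build the potential $f_\chi(x)=\langle\chi,\nu_x\rangle$, which is strictly monotone along oriented edges---is exactly the moment-map argument of that reference, which you also say you would invoke. The only point to keep in mind if you reproduce rather than invoke it is that the existence of the equivariant embedding (equivalently, of a linearised ample bundle) rests on Sumihiro's theorem, hence on normality, or else on the concrete equivariant embeddings available for the quiver Grassmannians considered in this paper.
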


Given a vertex $x$ of an oriented graph $\mathcal{G}$, we define
\[
 \mathcal{G}_1^{\partial x}:=\{E\in \mathcal{G}_1\mid \exists\ y\in \mathcal{G}_0\hbox{ with }E:x\to y\}.
\]

The classes in the following definition are named after Knutson and Tao since they first used classes of this form to construct a basis for the equivariant cohomology of Grassmannians in \cite{KnutsonTao2003}.
\begin{defi}\label{def:K-T-class}(\cite[Definition 2.12]{Tymoczko2008})
Let $(X,T)$ be a GKM-variety with moment graph $\mathcal{G}=\mathcal{G}(X,T,\chi)$. A \f{Knutson-Tao class} for $x \in X^T$ is an equivariant class $p^x = (p^x_y)_{y \in X^T}\in H_T^\bullet(X)$ such that:
\begin{enumerate}
\item[(KT1)] \(p^x_x=  \prod_{\substack{ E\in \mathcal{G}_1^{\partial x}}} \alpha_E, \)
\item[(KT2)] each $p^x_y$ is a homogeneous polynomial in $\Q[T]$ with $\deg p^x_y = \deg p^x_x$, 
\item[(KT3)] $p^x_y = 0$ for each $y \in X^T$ such that $y \nsucceq_\chi x$.
\end{enumerate}
\end{defi}
\begin{rem}\label{rem:schubert-classes}
For (generalised) flag varieties, the Knutson-Tao classes are equivariant Schubert classes and the partial order $\succeq$ is the Bruhat order \cite[Proposition~4.6 and Proposition~4.7]{Tymoczko2008}.
\end{rem}
\begin{prop}$\mr{(}$\cite[Proposition~2.13]{Tymoczko2008}$\mr{).}$\label{prop:K-T-basis}
Let $(X,T)$ be a GKM-variety and let $\chi$ be a generic cocharacter such that $\mathcal{G}(X,T,\chi)$ is acyclic.
Suppose that for each $x \in X^T$ there exists at least one Knutson-Tao class $p^x \in H_T^\bullet(X)$. Then the classes $\{p^x \vert x \in X^T\}$ form a basis for $H_T^\bullet(X)$. 
\end{prop}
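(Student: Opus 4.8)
The plan is to work inside the combinatorial model $H_T^\bullet(X)\cong\{(f_x)\}\subseteq\bigoplus_{x\in X^T}R$ furnished by Theorem~\ref{thm:GKM}, and to exploit the triangularity of the Knutson--Tao classes with respect to the partial order $\succeq_\chi$ (which is a genuine partial order since $\mathcal{G}(X,T,\chi)$ is acyclic by hypothesis, cf.\ Proposition~\ref{prop:acyclic-orientation}). Two features of a class $p^x$ drive everything: by (KT3) its support is contained in $\{y\in X^T\mid y\succeq_\chi x\}$, and by (KT1) its diagonal entry $p^x_x=\prod_{E\in\mathcal{G}_1^{\partial x}}\alpha_E$ is a product of edge labels. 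Since each $\alpha_E$ is a nonzero character and $R=\Q[\tau_1,\dots,\tau_q]$ is an integral domain (indeed a UFD), $p^x_x\neq 0$. I will fix once and for all a linear extension $x_1,\dots,x_N$ of $\succeq_\chi$ (where $N:=\lvert X^T\rvert$) with the sinks first, so that $x_i\preceq_\chi x_j\Rightarrow i\le j$; then $p^{x_j}_{x_i}\neq 0$ forces $x_i\succeq_\chi x_j$, i.e.\ $j\le i$, so the matrix $(p^{x_j}_{x_i})_{i,j}$ is lower triangular with nonzero diagonal.

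For linear independence, suppose $\sum_{x}c_x p^x=0$ with $c_x\in R$ not all zero, and pick $x_0$ minimal (for $\succeq_\chi$) among the $x$ with $c_x\neq 0$. Reading off the $x_0$-component gives $\sum_x c_x p^x_{x_0}=0$; by (KT3) only those $x$ with $x\preceq_\chi x_0$ contribute, and minimality leaves just $x=x_0$. Hence $c_{x_0}p^{x_0}_{x_0}=0$ in the domain $R$, and since $p^{x_0}_{x_0}\neq 0$ we get $c_{x_0}=0$, a contradiction. Thus the $p^x$ are $R$-linearly independent.

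For spanning I will run Gaussian elimination from the bottom of the poset. Given $f\in H_T^\bullet(X)$, set $f^{(0)}=f$ and, processing $i=1,\dots,N$, choose $c_i\in R$ so that $f^{(i)}:=f^{(i-1)}-c_i p^{x_i}$ vanishes in the components $x_1,\dots,x_i$. Triangularity guarantees that subtracting $c_i p^{x_i}$ leaves the already-cleared components $x_k$ ($k<i$) untouched, as $p^{x_i}_{x_k}=0$ there; so it only remains to kill the $x_i$-component, which requires $c_i=f^{(i-1)}_{x_i}/p^{x_i}_{x_i}$ to lie in $R$. This is where the GKM relations enter: each outgoing edge $E\colon x_i\to y$ has $y\prec_\chi x_i$, hence $y=x_k$ with $k<i$ and $f^{(i-1)}_{y}=0$ by the inductive clearing; the relation $f^{(i-1)}_{x_i}-f^{(i-1)}_{y}\in\alpha_E R$ of Theorem~\ref{thm:GKM} then shows $\alpha_E\mid f^{(i-1)}_{x_i}$. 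Running over all $E\in\mathcal{G}_1^{\partial x_i}$ should yield divisibility of $f^{(i-1)}_{x_i}$ by $p^{x_i}_{x_i}=\prod_E\alpha_E$, so $c_i\in R$; after the last step $f^{(N)}=0$ and $f=\sum_i c_i p^{x_i}$, which together with independence proves the claim.

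The main obstacle is precisely the divisibility $p^{x_i}_{x_i}\mid f^{(i-1)}_{x_i}$: knowing $\alpha_E\mid f^{(i-1)}_{x_i}$ for each outgoing edge yields divisibility by the \emph{product} only once one knows the labels $\{\alpha_E\}_{E\in\mathcal{G}_1^{\partial x_i}}$ are pairwise non-proportional (hence pairwise coprime in the UFD $R$, without repetition). This is the standard GKM edge-independence condition; in our setting it can be read off the explicit labels $\alpha_\mu=\epsilon_{j_t}-\epsilon_{j_s}+(k'-k)\delta$ of \eqref{eqn:Characters-in-moment-graph}. Should one prefer to avoid this verification, the spanning step can instead be completed via equivariant formality: the vanishing of odd cohomology makes $H_T^\bullet(X)$ a free graded $R$-module with Hilbert series $\bigl(\sum_x t^{2\dim_\C W_x}\bigr)/(1-t^2)^q$; since $\deg p^x=2\lvert\mathcal{G}_1^{\partial x}\rvert=2\dim_\C W_x$ and the $p^x$ are independent, the free submodule they generate has the same Hilbert series as the ambient module, forcing equality and hence spanning.
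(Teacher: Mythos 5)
The paper does not actually prove this proposition: it is imported verbatim from Tymoczko's work, and the nearest in-paper analogue of a proof is the elimination technique used for Lemma~\ref{lem:ExistenceKTbasis} and Theorem~\ref{thm:ExistenceKTBasis}. Your argument is exactly that standard argument. The linear-independence half is complete and correct: triangularity with respect to a linear extension of $\succeq_\chi$, together with the fact that $p^x_x=\prod_{E\in\mathcal{G}_1^{\partial x}}\alpha_E$ is nonzero in the integral domain $R$, is all that is needed. The spanning half --- clearing coordinates along the linear extension and using Theorem~\ref{thm:GKM} to show the correction coefficients lie in $R$ --- is also the right induction, and your bookkeeping is accurate (cleared coordinates stay cleared because $p^{x_i}_{x_k}=0$ for $k<i$; every target of an edge in $\mathcal{G}_1^{\partial x_i}$ has already been cleared).

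The unresolved point is the one you flag yourself: deducing $\prod_{E\in\mathcal{G}_1^{\partial x_i}}\alpha_E\mid f^{(i-1)}_{x_i}$ from $\alpha_E\mid f^{(i-1)}_{x_i}$ for each single $E$ requires the outgoing labels at $x_i$ to be pairwise non-proportional, and neither of your patches settles this at the stated level of generality. The proposition is asserted for an \emph{arbitrary} GKM-variety with acyclic moment graph, whereas the label formula \eqref{eqn:Characters-in-moment-graph} exists only for the quiver Grassmannians of Theorem~\ref{trm:comb-moment-graph}; even there one must argue that two distinct fundamental mutations out of the same fixed point cannot share a label (true, since movable parts inside a fixed summand are prefixes of one fixed suffix, so mutations with the same pair $(j_s,j_t)$ move parts of different lengths and hence have different $\delta$-coefficients), which is a short lemma rather than something one just reads off. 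The equivariant-formality fallback is in worse shape: it needs both a cell decomposition with Poincar\'e polynomial $\sum_x t^{2\dim_\C W_x}$ and the identity $\#\mathcal{G}_1^{\partial x}=\dim_\C W_x$; these are features of the BB-filterable, Palais--Smale situation, not of a general GKM-variety with an acyclic orientation, where the cells $W_x$ are not even part of the data. Pairwise independence can genuinely fail for singular GKM-varieties in the paper's sense (e.g.\ two $T$-curves with equal weights through the singular point of a one-point union of two projective lines), so the condition cannot be dropped silently; the clean fix is to assume it --- it is precisely what the paper itself uses, implicitly in the proof of Lemma~\ref{lem:ExistenceKTbasis} and explicitly in the proof of \eqref{eqn:KTClassAdjacent} --- or to verify it for the moment graphs of Theorem~\ref{trm:comb-moment-graph}, which is all the paper ever needs.
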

\begin{rem}
Unfortunately, in general it is not clear whether Knutson-Tao classes exist. Their existence is proven for several smooth GKM-varieties in \cite{GuilleminZara2003}, and Schubert varieties in \cite[\S3]{Tymoczko2008}. For BB-filterable GKM-varieties, we prove existence in Theorem ~\ref{thm:ExistenceKTBasis}. This applies to quiver Grassmannians of nilpotent $\Delta_n$-representations by Theorem \ref{trm:attr-forests-are-GKM}.
\end{rem}
\subsection{Existence of a KT-basis}
In this subsection we prove the existence of KT-classes for certain GKM-varieties.  

\begin{prop}\label{propn:GoodTotalOrder}Let $(X,T)$ be a BB-filterable GKM-variety and let $\chi$ be such that $\mathcal{G}=\mathcal{G}(X,T,\chi)$ contains no oriented cycles. Then, there exists a total order $\leq$ on $X^T$ such that
\begin{enumerate}
    \item[(TO1)] $\bigcup_{y\leq x} W_{y}$ is closed for any $x\in X^T$,
    
   \item[(TO2)] $x\succeq_\chi x'\quad\Rightarrow \quad x\geq x'$
\end{enumerate}
\end{prop}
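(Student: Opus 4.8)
The plan is to refine the partial order $\succeq_\chi$ to a total order by using the dimensions of the Bialynicki-Birula cells, reading off the closedness of initial segments from the fact that a BB-filterable variety is genuinely filtered by its attracting cells. First I would record the geometric input. Because $X$ is BB-filterable, its attracting decomposition $X=\bigsqcup_{x\in X^T}W_x$ is a filtration by locally closed rational cells; in particular each closure $\overline{W_x}$ is a union of cells, so $\overline{W_x}=\bigsqcup_{y}(\overline{W_x}\cap W_y)$ with every nonempty intersection equal to the whole cell $W_y$. Since $W_x$ is an irreducible rational cell, $\overline{W_x}$ is irreducible of dimension $\dim_\C W_x$ and its boundary $\overline{W_x}\setminus W_x$ is a proper closed subset; hence every cell $W_y\subseteq\overline{W_x}$ with $y\neq x$ satisfies $\dim_\C W_y<\dim_\C W_x$. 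This strict drop of dimension across closure relations is the key estimate.

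Next I would connect this to the moment graph orientation. By (MG1) every edge $E\colon x\to y$ of $\mathcal{G}$ is the closure of a one-dimensional orbit with $\lim_{\lambda\to 0}\chi(\lambda)\cdot p=x$, so $E\subseteq W_x$ and $y\in\overline{E}\subseteq\overline{W_x}$; thus $W_y\subseteq\overline{W_x}$ and $\dim_\C W_y<\dim_\C W_x$ by the previous step. Consequently the cell dimension strictly decreases along every edge, and therefore along every directed path: if $x\succeq_\chi x'$ with $x\neq x'$, then $\dim_\C W_x>\dim_\C W_{x'}$. In particular, this recovers the hypothesis that $\mathcal{G}$ has no oriented cycles.

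Finally I would define $\leq$ by ordering the points of $X^T$ according to $x\mapsto\dim_\C W_x$ in ascending order, breaking the remaining ties by any fixed total order; this is a total order on the finite set $X^T$. Property (TO2) is then immediate: if $x\succeq_\chi x'$ then either $x=x'$ or $\dim_\C W_x>\dim_\C W_{x'}$, and in both cases $x\geq x'$; note that two distinct $\succeq_\chi$-comparable points can never lie in the same dimension level, so the tie-breaking never interferes. For (TO1), fix $x$ and set $d=\dim_\C W_x$; the set $\bigcup_{y\leq x}W_y$ contains every cell of dimension $<d$ together with some cells of dimension $d$, and $\overline{\bigcup_{y\leq x}W_y}=\bigcup_{y\leq x}\overline{W_y}$. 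By the filtration property each $\overline{W_y}$ with $y\leq x$ is a union of cells $W_z\subseteq\overline{W_y}$, and each such $z\neq y$ has $\dim_\C W_z<\dim_\C W_y\leq d$; hence $W_z$ already appears among the cells of dimension $<d$ and so in $\bigcup_{y\leq x}W_y$. Thus $\bigcup_{y\leq x}W_y$ equals its own closure and (TO1) holds.

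The main obstacle is the geometric input of the first paragraph, namely that the attracting decomposition of a BB-filterable GKM-variety is a genuine filtration into irreducible rational cells whose boundaries have strictly smaller dimension. For the quiver Grassmannians of interest the cells are honest affine spaces by Theorem~\ref{trm:cell_decomp-forests}, so irreducibility and the strict dimension count are automatic; in the general BB-filterable setting one invokes the corresponding closure statements from the theory of BB-decompositions into rational cells. Once this is in place, the combinatorial packaging into the total order $\leq$ is routine.
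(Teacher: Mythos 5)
Your proof has a genuine gap, and it sits exactly where you flagged ``the main obstacle'': the claim that for a BB-filterable variety each closure $\overline{W_x}$ is a union of cells, so that the BB-decomposition is a stratification and $\dim_\C W_y<\dim_\C W_x$ along every edge $x\to y$. This is not part of Definition~\ref{def:BB-filterable} (which only requires the attracting sets to be rational cells), and it is false under the hypotheses of the proposition: the Remark at the beginning of Section~\ref{subsec:perm-action-cell-closures} records that when $\mathcal{G}(X,T,\chi)$ is not Palais-Smale there are counterexamples to the property that every cell closure is a union of cells (citing \cite[Example~7.6]{LaPu2020} and \cite[Example~4]{CFFFR2017}), and that it is even unclear whether the Palais-Smale property suffices. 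Your derived ``key estimate'' fails as well: since the paper identifies the out-degree $\#\mathcal{G}_1^{\partial L}$ with $\dim_\C W_L$ (see the proof of Proposition~\ref{prop:homg-forest-are-P-S}), strict dimension decrease along all edges is precisely the Palais-Smale condition, which is strictly stronger than acyclicity. Concretely, the left moment graph $\mathcal{G}(X,T,\chi)$ of Example~\ref{ex:cohomology-generators-loop-quiver} satisfies the hypotheses of the proposition (acyclic, BB-decomposition into affine cells) but is not Palais-Smale: it has an edge $L_4\to L_3$ between vertices of equal out-degree, i.e.\ between two cells of the same dimension. On this example your construction breaks: the arbitrary tie-breaking may order $L_3>L_4$, violating (TO2); your claim that two distinct $\succeq_\chi$-comparable points never lie in the same dimension level is false; and your verification of (TO1), which again expands each $\overline{W_y}$ as a union of cells, collapses. (Your aside that the estimate ``recovers'' acyclicity should have been a warning: it actually recovers Palais-Smale, a property the paper is careful to treat as an extra assumption.)

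The statement is nevertheless true, and the paper proves it without any stratification input. By \cite[Lemma~4.12]{Carrell02}, a BB-filterable variety admits a total order on $X^T$ satisfying (TO1) outright --- this filtration property, not ``closures are unions of cells,'' is what ``filterable'' delivers. One then shows that such an order automatically satisfies (TO2): if $x\to x'\in\mathcal{G}_1$, the corresponding one-dimensional orbit lies in $W_x$, hence $x'\in\overline{W_x}\subseteq\bigcup_{y\leq x}W_y$, the union being closed by (TO1); since the cells are pairwise disjoint and $x'\in W_{x'}$, the cell $W_{x'}$ must equal some $W_y$ with $y\leq x$, so $x'\leq x$. Induction on the length of a minimal directed path then gives (TO2) for arbitrary $x\succeq_\chi x'$. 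Note that this argument needs only the single point $x'$ --- not the whole cell $W_{x'}$ --- to lie in $\overline{W_x}$, which is exactly the part of your picture that survives when cell closures fail to be unions of cells.
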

\begin{proof}
By Proposition~\ref{prop:acyclic-orientation} there exists a cocharacter $\chi \in \mathfrak{X}_*(T)$ such that $\mathcal{G}(X,T,\chi)$ contains no oriented cycles. Moreover, we fixed $\succeq := \succeq_\chi$ for some $\chi$ with this property for the whole section. Thanks to \cite[Lemma~4.12]{Carrell02}, we know that a total order satisfying (TO1) exists. We show that also (TO2) holds under our hypothesis. 

The property (TO2) is trivial for $x=x'$. Thus, let $x,x'\in X^T$ be such that $x\succ x'$, that is there is at least one path in $\mathcal{G}$ from $x$ to $x'$. We prove the claim by induction on the length $r$ (i.e. number of arrows) of a minimal length path between them. If $x\rightarrow x'\in \mathcal{G}_1$, then there is a one-dimensional torus orbit $\mathcal{O}$ contained in $W_{x}$ whose closure contains the fixed point $x'\in X^T$. It follows that
\[
\overline{W_{x}}\cap W_{x'}\neq \emptyset.
\]
On the other hand, $\overline{W_{x}}\subseteq\bigcup_{y\leq x}W_y$ holds by property (TO1), and hence
\[
\left(\bigcup_{y\leq x} W_{y}\right)\cap W_{x'}\neq \emptyset.
\]
Since $W_y\cap W_{y'}\neq\emptyset$ if and only if $W_y=W_{y'}$, we conclude that 
\[
W_{x'}\subseteq \bigcup_{y\leq x} W_{y}
\]
and so $x'\leq x$.
Now, let $r\geq 2$, and let
\[
x=x_0\rightarrow x_1\rightarrow\ldots\rightarrow x_r=x'
\]
be a minimal length path. This implies that $x\succ x_1$, $x_1\succ x'$ and the length of a minimal path connecting $x_1$ and $x'$ is $<r$. By the base step we have that $x>x_1$, and by induction we get $x_1>x'$. We deduce $x>x'$.
\end{proof}
\begin{rem}\label{rem:total-order-fixed-points}
We assume now (and for the rest of the paper) that we have fixed a total order satisfying the properties in Proposition~\ref{propn:GoodTotalOrder}.
\end{rem}

\begin{lem}\label{lem:ExistenceKTbasis}Let $(X,T)$ be a BB-filterable GKM-variety and let $\chi$ be such that $\mathcal{G}=\mathcal{G}(X,T,\chi)$ contains no oriented cycles. Then, there exists an $R$-basis $\{q^x\}_{x\in X^T}$ of $H_T^\bullet(X)$ satisfying the following properties:
\begin{enumerate}
\item[(B1)] $q^x_x=\prod_{E\in\mathcal{G}^{\partial x}_1}\alpha_E$,
\item[(B2)] $q^x_y$ is homogeneous of degree $\textrm{deg}(q^x_x)$,
\item[(B3)] $q^x_y=0$ for any $y\leq x$.
\end{enumerate}
\end{lem}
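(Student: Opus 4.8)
The plan is to build the classes $q^x$ one fixed point at a time along the filtration coming from the total order of Proposition~\ref{propn:GoodTotalOrder}, and to read off their localisations from the long exact sequences of that filtration. Write $X^T=\{x_1<\dots<x_m\}$ for the total order and set $Z_i:=\bigcup_{j\le i}W_{x_j}$, which is closed by (TO1). Thus $\emptyset=Z_0\subset Z_1\subset\dots\subset Z_m=X$ is a filtration by closed $T$-stable subvarieties with $Z_i\setminus Z_{i-1}=W_{x_i}$, a rational cell in the sense of Definition~\ref{def:rational-cell}. Although Theorem~\ref{thm:GKM} identifies $H_T^\bullet(X)$ with tuples satisfying the edge relations — so that checking $q^x$ is a genuine class amounts to verifying those relations — the existence of a tuple with the prescribed diagonal is not combinatorially obvious, and it is precisely the rational-cell geometry that will produce it.

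First I would record the two homological inputs. Because $W_{x_i}$ is a rational cell of dimension $d_i:=\dim_\C W_{x_i}$ contracted by $\chi$ to $x_i$, its equivariant cohomology with compact supports is a free $R$-module of rank one concentrated in degree $2d_i$, and by excision $H_T^\bullet(Z_i,Z_{i-1})\cong H_{T,c}^\bullet(W_{x_i})\cong R[-2d_i]$; denote by $\sigma_i$ a generator. Since $(X,T)$ is a GKM-variety its cohomology, and inductively that of each $Z_i$, vanishes in odd degrees; as the relative terms are also even, the connecting homomorphisms in the long exact sequence of the pair $(Z_i,Z_{i-1})$ vanish and it breaks into short exact sequences
\[
0\to H_T^\bullet(Z_i,Z_{i-1})\xrightarrow{\ \iota_i\ } H_T^\bullet(Z_i)\xrightarrow{\ \rho_i\ } H_T^\bullet(Z_{i-1})\to 0 .
\]
By induction on $i$ these split over $R$, so every $H_T^\bullet(Z_i)$ is free and each restriction $H_T^\bullet(Z_i)\to H_T^\bullet(Z_{i-1})$ is surjective; composing, the restriction $H_T^\bullet(X)\to H_T^\bullet(Z_i)$ is surjective as well. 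I then define $q^{x_i}\in H_T^\bullet(X)$ to be any lift of $\iota_i(\sigma_i)\in H_T^\bullet(Z_i)$.

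With this definition, (B2) and (B3) are immediate: $\sigma_i$ is homogeneous of degree $2d_i$, so $q^{x_i}$ and all its localisations are homogeneous of that degree; and since $\iota_i(\sigma_i)$ lies in the image of the relative cohomology it restricts to $0$ on $Z_{i-1}$, whence $q^{x_i}_{x_j}=0$ for every $x_j\in Z_{i-1}$, that is for $x_j<x_i$ (so (B3) is to be read strictly, as $q^{x_i}_{x_i}\ne0$ by (B1)). Finally, the classes $\{q^{x_i}\}$ form an $R$-basis: by construction they map to the free generators $\sigma_i$ of the associated graded of the filtration, so freeness of $H_T^\bullet(X)$ forces them to be a basis.

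The heart of the argument, and the step I expect to be the main obstacle, is (B1): showing $q^{x_i}_{x_i}=\prod_{E\in\mathcal{G}_1^{\partial x_i}}\alpha_E$. The restriction of $\iota_i(\sigma_i)$ to the fixed point $x_i$ is the equivariant Euler class of the cell $W_{x_i}$ regarded as a $T$-module at $x_i$, that is the product of the $T$-weights of the attracting (positive) $\chi$-directions in $T_{x_i}X$. The content is to identify these weights with the labels $\alpha_E$ of exactly the outgoing edges $E\in\mathcal{G}_1^{\partial x_i}$: the one-dimensional $T$-orbits emanating from $x_i$ that lie in $W_{x_i}$ are, by the orientation convention (MG1) in Definition~\ref{def:moment-graph}, precisely the edges oriented out of $x_i$, each contributing its label. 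The delicate point is that $X$, and hence $W_{x_i}$, is only rationally smooth and in general singular (cf. Remark~\ref{rem:bad-alignment}), so the Euler class of the tangent representation must be extracted from the rational-cell structure together with the localisation at $x_i$ rather than from an honest normal bundle; reconciling the compactly supported fundamental class of the cell with the product of edge labels via the one-dimensional orbit data is where the rational-cell hypothesis and the GKM structure are both essential.
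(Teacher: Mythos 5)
Your construction is sound in outline, but it does not prove the lemma: the step you yourself call ``the heart of the argument'' --- property (B1), i.e.\ $q^{x_i}_{x_i}=\prod_{E\in\mathcal{G}_1^{\partial x_i}}\alpha_E$ --- is motivated and described but never established, and it is the entire geometric content of the statement. Everything else (the splitting of the long exact sequences by odd-vanishing, freeness, (B3) from the image of relative cohomology, the filtration argument for the basis property, and (B2) once you replace ``any lift'' by ``any \emph{homogeneous} lift'' along the degree-preserving surjection $H_T^\bullet(X)\to H_T^\bullet(Z_i)$) is routine; what is missing is (i) the identification of the restriction of the relative generator $\iota_i(\sigma_i)$ at $x_i$ with the equivariant Euler class $\mathrm{Eu}_T(x_i,W_{x_i})$ in the rationally smooth setting, and (ii) the theorem of Brion (\cite{Brion2000}, \S 1, for attractive rationally smooth fixed points of varieties with skeletal action) that this Euler class is a nonzero rational multiple of the product of the characters of the one-dimensional orbits attracted to $x_i$, which are precisely the outgoing edges $\mathcal{G}_1^{\partial x_i}$; the rational scalar is then absorbed by rescaling $\sigma_i$, which is only defined up to a unit anyway. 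Note also that your input ``$H_{T,c}^\bullet(W_{x_i})$ is free of rank one concentrated in degree $2d_i$'' is itself a nontrivial theorem about rational cells, not a consequence of Definition~\ref{def:rational-cell} alone; both facts are exactly what the paper outsources to \cite[Lemma~2.4 and Theorem~2.5]{LaPu2020}.

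The comparison with the paper's own proof is instructive. The paper does no filtration geometry at all: it quotes \cite[Theorem~2.5]{LaPu2020} to obtain a basis $\{\widetilde q^{x}\}$ with the correct diagonal entries (P1) and vanishing below $x$ (P2) --- the proof of that cited theorem is essentially the argument you rebuilt --- and then all of its effort goes into the one issue your route makes disappear: the cited classes need not be homogeneous, so the authors extract the homogeneous component $q^x:=\widetilde q^{x,\#\mathcal{G}_1^{\partial x}}$ and prove, by a recursive correction using the divisibility relations of Theorem~\ref{thm:GKM} along minimal nonvanishing entries, that the corrected classes still form a basis. Your observation that homogeneity comes for free, because the homogeneous generators of the graded pieces $H_T^\bullet(Z_i,Z_{i-1})$ admit homogeneous lifts through graded surjections, is a genuine simplification of that step --- but it is only available because you re-opened the proof of the black-box theorem instead of using it, and in doing so you inherited its hardest ingredient, which your proposal leaves unproven.
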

\begin{proof} By \cite[Theorem~2.5]{LaPu2020} we know that there exists an $R$-basis $\{\widetilde{q}^x\}_{x\in X^T}$ of $H^\bullet_T(X)$ such that, for any $x\in X^T$, the following properties hold:
\begin{enumerate}
    \item[(P1)] $\widetilde{q}^x_x=\textrm{Eu}_T(x,W_x)$.
    \item[(P2)] $\widetilde{q}^x_y=0$ for any $y\leq x$,
\end{enumerate}
Here $\textrm{Eu}_T(x,W_x)$ denotes the equivariant Euler class and in our case we have  $\textrm{Eu}_T(x,W_x)=\prod_{E\in\mathcal{G}^{\partial x}_1}\alpha_E$ (cf. \cite[proof of Lemma~2.4]{LaPu2020}).

To prove the claim it remains to modify the given basis in such a way that any element is homogeneous. Since $H_T^\bullet(X)$ is graded, we can write 
\[
\widetilde{q}^x=\sum_{i\geq 0} \widetilde{q}^{x,i},
\]
where $\widetilde{q}^{x,i}$ denotes the homogeneous part of degree $2i$. We claim that we can just set $q^x:=\widetilde{q}^{x,\#\mathcal{G}^{\partial x}_1}$. 

Indeed, it satisfies (B1), (B2) and (B3). By property (B3), $\widetilde{q}^{x,i}_y= 0$ holds for any $y\leq x$ and any $i\geq 0$ (in particular for $i=\#\mathcal{G}^{\partial x}_1$). (B1) and (B2) hold by construction. It remains to check that $\{q^x\}$ is a basis. 

This follows if we can prove that there exist $c_{y}^x\in R$ such that
\[
q^x=\widetilde{q}^x-\sum_{y<x}c_y^x \widetilde{q}^y.
\]

We give a recursive construction of the $c_{y}^x$: Let $x\in X^T$ and $i\neq \#\mathcal{G}^{\partial x}_1$ be such that $\widetilde{q}^{x,i}\neq 0$. Assume that $y$ is minimal (w.r.t. the total order $\leq$) such that $\widetilde{q}^{x,i}_y\neq 0$. By property (P2), we have that $y\geq x$. Moreover, $i\neq \#\mathcal{G}^{\partial x}_1$ implies  $y\neq x$ by property (P1). Since $y$ is minimal, $\widetilde{q}^{x,i}_{y'}= 0$ holds for any $y'\in X^T$ with $y\rightarrow y'\in\mathcal{G}^{\partial y}_1$ and so $\prod_{E\in\mathcal{G}^{\partial y}_1}\alpha_E$ divides $\widetilde{q}^{x,i}_{y}$ by Theorem~\ref{thm:GKM}. We conclude that 
\[
\left(\widetilde{q}^{x,i}
- \frac{\widetilde{q}^{x,i}_y}{\prod_{E\in\mathcal{G}^{\partial y}_1}\alpha_E}\widetilde{q}^{y}\right)_{y}=0.
\]
If $\widetilde{q}^{x,i}-\left( \frac{\widetilde{q}^{x,i}_y}{\prod_{E\in\mathcal{G}^{\partial y}_1}\alpha_E}\right)\widetilde{q}^y\neq 0$ we look for the minimal non-zero entry, and proceed in this way to construct the $c_{y}^x\in R$ recursively.\end{proof}

\begin{thm}\label{thm:ExistenceKTBasis}Let $(X,T)$ be a BB-filterable GKM-variety and let $\chi$ be such that $\mathcal{G}=\mathcal{G}(X,T,\chi)$ contains no oriented cycles. 
Then, there exists a KT-basis of $H_T^\bullet(X)$ (w.r.t. the partial order $\succeq:=\succeq_\chi$).
\end{thm}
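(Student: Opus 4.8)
The plan is to upgrade the $R$-basis $\{q^x\}_{x\in X^T}$ produced by Lemma~\ref{lem:ExistenceKTbasis} to a genuine KT-basis. Conditions (B1) and (B2) already coincide with (KT1) and (KT2), so the only gap is between the total-order support condition (B3) and the partial-order support condition (KT3). Since the total order refines the orientation by (TO2) of Proposition~\ref{propn:GoodTotalOrder}, any $y$ with $y<x$ satisfies $y\nsucceq_\chi x$ and already has $q^x_y=0$; the entries that can violate (KT3) are therefore exactly those $y$ with $y>x$ and $y\nsucceq_\chi x$. I will remove these one at a time by subtracting suitable $R$-multiples of the basis elements $q^{z}$, thereby constructing a single KT-class $p^x$ for each $x$. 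Producing one KT-class per fixed point is all that is needed, because Proposition~\ref{prop:K-T-basis} (applicable since $\mathcal{G}$ is acyclic by hypothesis) then guarantees that the resulting family is a basis.

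For fixed $x$, start with $f:=q^x$ and call an index $y$ \emph{bad} if $f_y\neq 0$ and $y\nsucceq_\chi x$; by the remark above every bad index satisfies $y>x$. If there are no bad indices, set $p^x:=f$. Otherwise let $y_0$ be the minimal bad index with respect to $\leq$. The key observation is that every out-edge $E\colon y_0\to y'$ of $\mathcal{G}$ lands on an index with $f_{y'}=0$: indeed $y_0\to y'$ gives $y_0\succeq_\chi y'$, so if $y'\succeq_\chi x$ then transitivity would force $y_0\succeq_\chi x$, contradicting that $y_0$ is bad; hence $y'\nsucceq_\chi x$, and since $y'<y_0$ by (TO2), minimality of $y_0$ forces $f_{y'}=0$. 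As the labels of the out-edges at $y_0$ are pairwise non-proportional, the GKM relations of Theorem~\ref{thm:GKM} then show that $\prod_{E\in\mathcal{G}_1^{\partial y_0}}\alpha_E=q^{y_0}_{y_0}$ divides $f_{y_0}$ in $R$.

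Writing $c:=f_{y_0}/q^{y_0}_{y_0}\in R$, I replace $f$ by $f-c\,q^{y_0}$. This kills the entry at $y_0$; moreover, since $q^{y_0}$ vanishes at every index strictly smaller than $y_0$ by (B3), no entry at an index $\leq y_0$ other than $y_0$ is altered, so in particular $f_x$ and the already-cleaned entries are preserved and no new bad index below $y_0$ is created. Homogeneity of degree $\deg q^x_x$ is maintained throughout, because $q^{y_0}$ is homogeneous and $c$ carries the complementary degree. Each step strictly raises the minimal bad index, so finiteness of $X^T$ guarantees termination with a class $p^x$ satisfying (KT1)--(KT3). Applying Proposition~\ref{prop:K-T-basis} to the family $\{p^x\}_{x\in X^T}$ completes the argument.

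I expect the main obstacle to be the justification of the divisibility step, which hinges on two points: that the out-neighbours of a minimal bad index carry vanishing entries (handled by the transitivity argument above) and that distinct edge labels at a vertex are pairwise non-proportional linear forms, which is what allows one to pass from divisibility by each individual $\alpha_E$ to divisibility by their product. The latter is the GKM regularity already implicitly used in the proof of Lemma~\ref{lem:ExistenceKTbasis}, and for our quiver Grassmannians it must be read off from the explicit edge labels $\alpha_\mu$ of~\eqref{eqn:Characters-in-moment-graph}.
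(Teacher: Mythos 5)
Your proof is correct and follows essentially the same route as the paper's: starting from the basis of Lemma~\ref{lem:ExistenceKTbasis}, locating the minimal (in the total order) entry violating (KT3), showing via transitivity of $\succeq_\chi$ together with (TO2) that all out-neighbours of that vertex carry zero entries, and then using the GKM relations of Theorem~\ref{thm:GKM} plus pairwise non-proportionality of the labels to divide by $\prod_{E\in\mathcal{G}_1^{\partial y_0}}\alpha_E$. The only difference is organizational: the paper subtracts multiples of the already-constructed KT-classes $p^{x_r}$, which forces a descending induction over $X^T$, whereas you subtract multiples of the original basis elements $q^{y_0}$; since by (B3) these also vanish at all indices below $y_0$, your corrections work equally well, let each $p^x$ be built independently of the others, and Proposition~\ref{prop:K-T-basis} (in place of the paper's explicit triangularity argument) then yields that the resulting classes form a basis.
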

\begin{proof}
By Lemma \ref{lem:ExistenceKTbasis}, we know that there exists a basis $\{q^x\}_{x\in X^T}$ satisfying (KT1), (KT2) and such that
\[
q^x_{y}=0 \quad\hbox {for any }y\leq x.
\]
We want to modify this basis in such a way that also (KT3) holds. 

Since we have a total order on $X^T$, it is convenient to renumber its elements as
\[
x_1,x_2, \ldots, x_{m}
\]
with $m := \#X^T$, meaning that $x_i\leq x_j$ if and only if $i\leq j$. We will show by induction on $m-i$ that there exist homogeneous elements $c_j^i\in R$ such that
\[
p^{x_i}:=q^{x_i}-\sum_{j<i}c_j^i q^{x_j},
\]
with $\deg(c_j^i)+\deg(q^{x_j})=\#\mathcal{G}_1^{\partial x_i}$ and that $p^{x_i}$ verifies (KT1), (KT2) and (KT3).

If $i=m$, we can just take $p^{x_{m}}=q^{x_m}$, as $x_{m}$ must be a maximal element of the poset $(X^T,\preceq)$ by (TO2). We assume now that we have determined  $\{c_j^k\mid j<k\}$ such that the resulting  $p^{x_k}$ has properties (KT1), (KT2) and (KT3) for any $k>i$. If $q^{x_i}$ already verifies (KT3), we take $p^{x_i}:=q^{x_i}$. Otherwise there is a minimal $r$ (w.r.t. $\leq$) such that $q^{x_i}_{x_r}\neq 0$ and $x_r\not\succeq x_i$. Note that if $y\in X^T$ is such that $x_r\rightarrow y$, then $y\preceq x_r$ and by (TO2) we have $y<x_r$. Moreover, since $x_r\not\succeq x_i$, also $y\not\succeq x_i$ and by the minimality of $r$ we conclude that $q^{x_i}_{y}=0$. It follows that 
\[
\left(\prod_{E\in\mathcal{G}^{\partial x_r}_1}\alpha_E\right)\ \mr{divides} \  q^{x_i}_{x_r}.
\]
By induction, we know that we have already found a KT-class $p^{x_r}$ and we can substitute $q^{x_i}$ by 
\[
q^{x_i}-\left(\frac{ q^{x_i}_{x_r}}{\prod_{E\in\mathcal{G}^{\partial x_r}_1}\alpha_E}\right)p^{x_r}. 
\]
This element verifies (KT1) and (KT2). Property (KT3) holds for every entry with index $\leq x_r$, so that we can proceed recursively (since $X^T$ has a finite number of elements, this procedure will certainly end).

Note now that the matrix whose $(i,j)$-entry is the coefficient of  $q^{x_j}$ in the expansion of $p^{x_i}$ in the basis $\{q^{x}\}_{x\in X^T}$ is invertible (being upper triangular with 1s on the diagonal). This implies that also $\{p^{x}\}_{x\in X^T}$ is a basis.
\end{proof}
\subsection{Palais-Smale Orientation of the Moment Graph}\label{subsec:P-S-moment graph}
\begin{defi}\label{def:P-S-orientation}
A GKM-variety $(X,T)$ is \f{Palais-Smale} if there exists a generic cocharacter $\chi$ of $T$ such that $\mathcal{G}=\mathcal{G}(X,T,\chi)$ satisfies:
\begin{equation}\label{eqn:PalaisSmale}
 \#\mathcal{G}_1^{\partial x}>\#\mathcal{G}_1^{\partial y} \quad \fa x\to y\in \mathcal{G}_1.
\end{equation}
We say that $\mathcal{G}$ is Palais-Smale (PS) oriented if \eqref{eqn:PalaisSmale} holds.
\end{defi}
\begin{rem}
Note that if $\mathcal{G}(X,T,\chi)$ is Palais-Smale, then $\succeq_\chi$ is a partial order.
\end{rem}

\begin{ex}\label{ex:cohomology-generators-loop-quiver}
Consider the quiver Grassmannian $\mr{Gr}_2(M)$ for the loop quiver $\Delta_1$, with $M= A_2 \oplus A_1 \oplus A_1$ where $A_N \cong \C[t]/(t^N)$ as in \cite[Example~7.6]{LaPu2020}. On this quiver Grassmannian the torus $T= (\C^*)^{3+1}$ acts as
\begin{center}
\begin{tikzpicture}[scale=0.5]
\node at (0,3) {$\gamma_1$};
\node at (0,2) {$\gamma_0\gamma_1$};
\node at (0,1) {$\gamma_2$};
\node at (0,0) {$\gamma_3$};

 \def\centerarc[#1](#2)(#3:#4:#5);%
    {
    \draw[#1]([shift=(#3:#5)]#2) arc (#3:#4:#5);
    }
\centerarc[arrows={-angle 90}](0.5,2.5)(76:-76:0.5cm); 
\end{tikzpicture}
\end{center}
such that it becomes a BB-filterable GKM-variety, by \cite[Lemma~7.5]{LaPu2020}. For the cocharacter $\chi : \C^* \to T$ with $\chi(\lambda)= (\lambda,\lambda,\lambda^3,\lambda^4)=(\gamma_0,\gamma_1,\gamma_2,\gamma_3) \in T$, we can also apply Theorem~\ref{trm:comb-moment-graph} to compute its moment graph. The basis of $M$ induced by $\chi$ (s.t. $Q(M,B)$ is attractive) is $B = \{v_1= b_{1,0}, v_2=b_{1,1}, v_3=b_{2,0}, v_4=b_{3,0}\}$. There are four $T$-fixed points and their corresponding subquivers in $Q(M,B)$ are of the form: 
\begin{center}
\begin{tikzpicture}[scale=0.4]
\node at (-1.1,1.5) {$L_1 = $};
\draw[fill=white] (0,3) circle (.12);
\draw[fill=white] (0,2) circle (.12);
\draw[fill=black] (0,1) circle (.12);
\draw[fill=black] (0,0) circle (.12);

 \def\centerarc[#1](#2)(#3:#4:#5);%
    {
    \draw[#1]([shift=(#3:#5)]#2) arc (#3:#4:#5);
    }
\centerarc[arrows={-angle 90}](0,2.5)(76:-76:0.5cm); 
\end{tikzpicture}$\quad \ $
\begin{tikzpicture}[scale=0.4]
\node at (-1.1,1.5) {$L_2 = $};
\draw[fill=white] (0,3) circle (.12);
\draw[fill=black] (0,2) circle (.12);
\draw[fill=white] (0,1) circle (.12);
\draw[fill=black] (0,0) circle (.12);

 \def\centerarc[#1](#2)(#3:#4:#5);%
    {
    \draw[#1]([shift=(#3:#5)]#2) arc (#3:#4:#5);
    }
\centerarc[arrows={-angle 90}](0,2.5)(76:-76:0.5cm); 
\end{tikzpicture}$\quad \ $
\begin{tikzpicture}[scale=0.4]
\node at (-1.1,1.5) {$L_3 = $};
\draw[fill=white] (0,3) circle (.12);
\draw[fill=black] (0,2) circle (.12);
\draw[fill=black] (0,1) circle (.12);
\draw[fill=white] (0,0) circle (.12);

 \def\centerarc[#1](#2)(#3:#4:#5);%
    {
    \draw[#1]([shift=(#3:#5)]#2) arc (#3:#4:#5);
    }
\centerarc[arrows={-angle 90}](0,2.5)(76:-76:0.5cm); 
\end{tikzpicture}$\quad \ $
\begin{tikzpicture}[scale=0.4]
\node at (-1.1,1.5) {$L_4 = $};
\draw[fill=black] (0,3) circle (.12);
\draw[fill=black] (0,2) circle (.12);
\draw[fill=white] (0,1) circle (.12);
\draw[fill=white] (0,0) circle (.12);

 \def\centerarc[#1](#2)(#3:#4:#5);%
    {
    \draw[#1]([shift=(#3:#5)]#2) arc (#3:#4:#5);
    }
\centerarc[arrows={-angle 90}](0,2.5)(76:-76:0.5cm); 
\end{tikzpicture}
\end{center}
The induced basis of the cocharacter $\chi' : \C^* \to T$ with $\chi'(\lambda)= (\lambda^3,\lambda,\lambda^2,\lambda^3)=(\gamma_0,\gamma_1,\gamma_2,\gamma_3) \in T$, is $B'=\{v_1= b_{1,0}, v_2=b_{2,0}, v_3=b_{3,0}, v_4=b_{1,1}\}$ and the coefficient quivers of the fixed points in this basis are:
\begin{center}
\begin{tikzpicture}[scale=0.4]
\node at (-1.1,1.5) {$L_1' = $};
\draw[fill=white] (0,3) circle (.12);
\draw[fill=white] (0,2) circle (.12);
\draw[fill=black] (0,1) circle (.12);
\draw[fill=black] (0,0) circle (.12);

 \def\centerarc[#1](#2)(#3:#4:#5);%
    {
    \draw[#1]([shift=(#3:#5)]#2) arc (#3:#4:#5);
    }
\centerarc[arrows={-angle 90}](-1.5,1.5)(42:-42:2.15cm); 
\end{tikzpicture}$\quad \ $
\begin{tikzpicture}[scale=0.4]
\node at (-1.1,1.5) {$L_2' = $};
\draw[fill=white] (0,3) circle (.12);
\draw[fill=black] (0,2) circle (.12);
\draw[fill=white] (0,1) circle (.12);
\draw[fill=black] (0,0) circle (.12);

 \def\centerarc[#1](#2)(#3:#4:#5);%
    {
    \draw[#1]([shift=(#3:#5)]#2) arc (#3:#4:#5);
    }
\centerarc[arrows={-angle 90}](-1.5,1.5)(42:-42:2.15cm); 
\end{tikzpicture}$\quad \ $
\begin{tikzpicture}[scale=0.4]
\node at (-1.1,1.5) {$L_3' = $};
\draw[fill=white] (0,3) circle (.12);
\draw[fill=black] (0,2) circle (.12);
\draw[fill=black] (0,1) circle (.12);
\draw[fill=white] (0,0) circle (.12);

 \def\centerarc[#1](#2)(#3:#4:#5);%
    {
    \draw[#1]([shift=(#3:#5)]#2) arc (#3:#4:#5);
    }
\centerarc[arrows={-angle 90}](-1.5,1.5)(42:-42:2.15cm); 
\end{tikzpicture}$\quad \ $
\begin{tikzpicture}[scale=0.4]
\node at (-1.1,1.5) {$L_4' = $};
\draw[fill=black] (0,3) circle (.12);
\draw[fill=white] (0,2) circle (.12);
\draw[fill=white] (0,1) circle (.12);
\draw[fill=black] (0,0) circle (.12);

 \def\centerarc[#1](#2)(#3:#4:#5);%
    {
    \draw[#1]([shift=(#3:#5)]#2) arc (#3:#4:#5);
    }
\centerarc[arrows={-angle 90}](-1.5,1.5)(42:-42:2.15cm); 
\end{tikzpicture}
\end{center}
For $\chi$ we obtain the moment graph on the left below, and for $\chi'$ we get the moment graph on the right hand side:
\begin{center}
\begin{tikzpicture}[scale=0.75]

\node at (-2,0) {$L_4$};
\node at (2,0) {$L_3$};
\node at (0,-2) {$L_2$};
\node at (0,-4) {$L_1$};

\node at (0,0.4) {\tiny $\epsilon_2-\epsilon_1$}; 
\node at (-1.9,-1) {\tiny $\epsilon_3-\epsilon_1$}; 
\node at (0.3,-0.7) {\tiny $\epsilon_3-\epsilon_2$}; 
\node at (2.9,-2) {\tiny $\epsilon_3-\epsilon_1-\delta$}; 
\node at (-1.3,-3) {\tiny $\epsilon_2-\epsilon_1-\delta$}; 

\draw[arrows={-angle 90}, shorten >=7, shorten <=7]  (-2,0) -- (2,0); 
\draw[arrows={-angle 90}, shorten >=7, shorten <=7]  (2,0) -- (0,-2); 
\draw[arrows={-angle 90}, shorten >=7, shorten <=7]  (-2,0) -- (0,-2); 
\draw[arrows={-angle 90}, shorten >=7, shorten <=7]  (0,-2) -- (0,-4); 

 \def\centerarc[#1](#2)(#3:#4:#5);%
    {
    \draw[#1]([shift=(#3:#5)]#2) arc (#3:#4:#5);
    }
\centerarc[arrows={-angle 90}](-2.7,0)(-4:-52:4.8cm); 
\end{tikzpicture}$\quad \quad \quad$
\begin{tikzpicture}[scale=0.75]

\node at (-2,0) {$L_4'$};
\node at (2,0) {$L_3'$};
\node at (0,-2) {$L_2'$};
\node at (0,-4) {$L_1'$};

\node at (-0.35,-0.85) {\tiny $\epsilon_2-\epsilon_1$}; 
\node at (0.6,-0.35) {\tiny $\epsilon_1-\epsilon_3+\delta$}; 
\node at (0.63,-2.46) {\tiny $\epsilon_3-\epsilon_2$}; 
\node at (2.9,-2) {\tiny $\epsilon_1-\epsilon_2+\delta$}; 
\node at (-2.6,-2) {\tiny $\epsilon_3-\epsilon_1$}; 

\draw[arrows={-angle 90}, shorten >=7, shorten <=7]  (2,0) -- (0,-2); 
\draw[arrows={-angle 90}, shorten >=7, shorten <=7]  (-2,0) -- (0,-2); 
\draw[arrows={-angle 90}, shorten >=7, shorten <=7]  (0,-2) -- (0,-4); 

 \def\centerarc[#1](#2)(#3:#4:#5);%
    {
    \draw[#1]([shift=(#3:#5)]#2) arc (#3:#4:#5);
    }
\centerarc[arrows={-angle 90}](-2.7,0)(-4:-51:4.8cm); 
\centerarc[arrows={-angle 90}](2.7,0)(180+4:180+51:4.8cm); 
\end{tikzpicture}
\end{center}
Observe that the moment graph on the right hand side is PS-oriented, while the one on the left hand side is not.
\end{ex}
\begin{rem}
Example~\ref{ex:cohomology-generators-loop-quiver} shows that the choice of an attractive grading of a nilpotent $\Delta_n$-representation $M$, corresponding to the choice of a generic cocharacter (see Proposition~\ref{prop:generic-cochar}), determines an orientation the moment graph $\mathcal{G}(X,T)$ for the $T$-action on $\mr{Gr}_\mb{e}(M)$ as in Definition~\ref{def:moment-graph}.	For our computations we assume that the coefficient quiver of $M$ is attractively aligned (see Remark~\ref{rem:alignability}). Hence the choice of another attractive grading corresponds to a permutation of the basis vectors preserving each set $B^{(i)}$. We will deal with a particular class of these permutations in Section~\ref{sec:Permutation-Action}.
\end{rem}
The following lemma is proven by Tymoczko in \cite[Lemma~2.16]{Tymoczko2008} and provides uniqueness of KT-classes with respect to a fixed PS-orientation.
\begin{lma}\label{lma:K-T-classes-unique}
Let $(X,T)$ be a GKM-variety and let $\chi\in\mathfrak{X}^*(T)$ be a genereic cocharacter such that $\mathcal{G}(X,T,\chi)$ is Palais-Smale. If $p^x = (p^x_y)_{y \in X^T}$, $q^x = (q^x_y)_{y \in X^T}$ are two Knutson-Tao classes (with respect to $\succeq_\chi$) corresponding to $x \in X^T$, then $p^x_y = q^x_y$ for each $y \in X^T$.
\end{lma}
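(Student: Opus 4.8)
The plan is to pass to the difference class $r := p^x - q^x \in H_T^\bullet(X)$ and to show it vanishes identically, which is exactly the assertion $p^x_y = q^x_y$ for all $y$. The three defining properties of Knutson--Tao classes translate directly into properties of $r$: by (KT1) the diagonal entries coincide, so $r_x = 0$; by (KT2) every entry $r_y$ is homogeneous with $\deg r_y = \deg p^x_x = 2\,\#\mathcal{G}_1^{\partial x}$ (recall $\deg \tau_i = 2$ from Remark~\ref{rem:torus-characters-part-ii}, so each label $\alpha_E$ has degree $2$); and by (KT3), $r_y = 0$ whenever $y \nsucceq_\chi x$. Hence the support $\mr{supp}(r) := \{y \in X^T \mid r_y \neq 0\}$ is contained in $\{y \in X^T \mid y \succ_\chi x\}$.

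Assume for contradiction that $r \neq 0$. Since $X^T$ is finite and, by the remark following Definition~\ref{def:P-S-orientation}, $\succeq_\chi$ is a genuine partial order under the Palais--Smale hypothesis, the nonempty set $\mr{supp}(r)$ admits a $\succeq_\chi$-minimal element $y_0$. For any edge $E : y_0 \to z$ in $\mathcal{G}_1^{\partial y_0}$ the endpoints are distinct $T$-fixed points, so $y_0 \succ_\chi z$; by minimality of $y_0$ this forces $z \notin \mr{supp}(r)$, i.e.\ $r_z = 0$. Applying the GKM compatibility relation of Theorem~\ref{thm:GKM} to $r$ along each such edge gives $r_{y_0} - r_z = r_{y_0} \in \alpha_E R$, so $\alpha_E$ divides $r_{y_0}$ for every outgoing edge $E$ at $y_0$.

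The contradiction is now a degree count. The labels $\{\alpha_E \mid E \in \mathcal{G}_1^{\partial y_0}\}$ are pairwise non-proportional linear forms in the polynomial ring $R = \Q[\tau_1,\dots,\tau_q]$, hence pairwise coprime, so their product $\prod_{E \in \mathcal{G}_1^{\partial y_0}} \alpha_E$ — homogeneous of degree $2\,\#\mathcal{G}_1^{\partial y_0}$ — divides $r_{y_0}$. On the other hand $y_0 \succ_\chi x$ provides a directed path $y_0 \to \cdots \to x$ of positive length, and the Palais--Smale inequality \eqref{eqn:PalaisSmale} makes the out-degree drop strictly at each arrow, whence $\#\mathcal{G}_1^{\partial y_0} > \#\mathcal{G}_1^{\partial x}$. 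Therefore $\deg r_{y_0} = 2\,\#\mathcal{G}_1^{\partial x} < 2\,\#\mathcal{G}_1^{\partial y_0}$; but in the integral domain $R$ a nonzero homogeneous polynomial cannot be divisible by one of strictly larger degree. This contradicts $r_{y_0} \neq 0$, so $r = 0$ and the two classes agree.

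I expect the one point requiring care to be the coprimality step: the degree estimate only produces a contradiction if the full product of the outgoing labels divides $r_{y_0}$, which in turn needs the edge labels emanating from a common vertex to be pairwise non-proportional. In our situation this is ensured by the explicit description \eqref{eqn:Characters-in-moment-graph} of the moment-graph labels, since distinct fundamental mutations out of a fixed vertex carry distinct characters and thus pairwise coprime linear forms; for a general GKM-variety it is the standard pairwise-independence of tangent weights at a fixed point. Granting this, the proof is an obstruction-free minimality argument, and no induction is strictly needed once the partial order $\succeq_\chi$ and the strict out-degree drop are in place.
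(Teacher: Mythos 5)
Your proof is correct, and it is essentially the argument of record: the paper does not prove this lemma itself but cites \cite[Lemma~2.16]{Tymoczko2008}, whose proof is exactly your minimal-support argument (difference class vanishes below and at $x$, all outgoing labels at a minimal nonzero entry divide it by the GKM relations and pairwise independence of weights at a fixed point, and the Palais--Smale strict out-degree drop forces a degree contradiction). The same technique also appears verbatim inside the paper's proofs of Lemma~\ref{lem:ExistenceKTbasis} and Theorem~\ref{thm:ExistenceKTBasis}, so your reconstruction matches the intended proof rather than offering a different route.
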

\begin{ex}\label{ex:K-T-basis-loop-quiver}
In \cite[Example~7.6]{LaPu2020}, we use \cite[Theorem~2.10]{LaPu2020} to compute an $H_T^\bullet(\textrm{pt})$-module basis of $H_T^\bullet(X)$ for the quiver Grassmannian from Example~\ref{ex:cohomology-generators-loop-quiver}. Now we can apply Proposition~\ref{prop:K-T-basis} to $\mathcal{G}(X,T,\chi')$ since this moment graph is acyclic. Thus, the KT-classes below form an alternative basis of $H_T^\bullet(X)$.
\begin{align*}
p^1 &= (1,1,1,1)\\
p^2 &= (0,\epsilon_3-\epsilon_2,\epsilon_2-\epsilon_1-\delta,\epsilon_3-\epsilon_1)\\
p^3 &= (0,0,(\epsilon_3-\epsilon_1-\delta)(\epsilon_2-\epsilon_1-\delta),0)\\
p^4 &= (0,0,0,(\epsilon_3-\epsilon_1)(\epsilon_2-\epsilon_1))
\end{align*}
Observe that determining a basis consisting of KT-classes is in general significantly easier than computing the basis from \cite[Theorem~2.10]{LaPu2020}. In fact, the construction in \cite{LaPu2020} required the computation of Euler classes. This is in general a non trivial task, which may be solved by constructing $T$-equivariant desingularisations, as explained in \cite[Appendix~A]{LaPu2020}. 

Moreover, since $\mathcal{G}(X,T,\chi')$ is PS-oriented, we can apply Lemma~\ref{lma:K-T-classes-unique} to conclude that the above basis is the unique basis with the properties in Definition~\ref{def:K-T-class}. 
\end{ex}
\subsection{Homogeneous Representations}\label{subsec:Homogeneous-Forests}
In this section, we introduce a class of quiver Grassmannians which are Palais-Smale.
\begin{defi}\label{def:homog-forest}
A nilpotent representation $M \in \mr{rep}_\C(\Delta_n)$ is called \f{homogeneous} if there exists a basis $B$ such that:
\begin{enumerate}
\item[(H0)] $M$ is attractive,
\item[(H1)] $Q(M,B)$ is aligned,
\item[(H2)] $Q(M,B)$ is gapless, i.e.: for each $(a:i\to i+1) \in \Z_n$ and $k \in [m_i]$ with $M_a v^{(i)}_k = v^{(i+1)}_{k'}$, and $m_{i+1} - k' > 0$ it holds that
\[ M_a v^{(i)}_{k+r} = v^{(i+1)}_{k'+r} \quad \fa r \in [m_{i+1}-k'].\]
\end{enumerate}
\end{defi}
\begin{ex}\label{ex:homog-cyclic-rep}
Let 
\[ M = \bigoplus_{i \in \Z_n} U_i(N) \otimes \C^{k_i} \]
be a $\Delta_n$-representation, with $N \in \Z_{\geq 1}$ and $k_i \in \Z_{\geq 0}$ for all $i \in \Z_n$. Independently of the choice of $N$ and the $k_i$'s, the representation $M$ is homogeneous of we consider the basis as constructed in the proof of \cite[Proposition~4.8]{LaPu2020}. The quiver Grassmannians for $M$ of the above form are of particular interest, because they can be used to approximate the (degenerate) affine Grassmannian (see \cite[Proposition~7.4]{LaPu2020}) and the (degenerate) affine flag variety (see \cite[Theorem~3.7]{Pue2020}), both of type $\mathfrak{gl}_n$. 
\end{ex}
\begin{prop}\label{prop:homg-forest-are-P-S}
Let $M\in \mr{rep}_\C(\Delta_n)$ be  homogeneous nilpotent. If $T$ acts on $X:=\mr{Gr}_\mb{e}(M)$ as in Proposition~\ref{prop:generic-cochar} and $\chi$ is the corresponding cocharacter of the attractive grading of $M$, then $\mathcal{G}(X,T,\chi)$ is Palais-Smale. 
\end{prop}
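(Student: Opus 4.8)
The plan is to verify the defining inequality \eqref{eqn:PalaisSmale} of Definition~\ref{def:P-S-orientation} edge by edge, translating it into a statement about the dimensions of the Bialynicki-Birula cells. I would split the argument into two independent claims. First, for every fixed point $S$ one has
\[
\#\mathcal{G}_1^{\partial S}=\dim_\C W_S .
\]
Second, whenever $S\to H$ is an edge of $\mathcal{G}(X,T,\chi)$ --- equivalently, whenever $H=\mu(S)$ for a fundamental mutation $\mu$, by Theorem~\ref{trm:comb-moment-graph} --- the cell dimension strictly drops, $\dim_\C W_H<\dim_\C W_S$. Combining the two gives $\#\mathcal{G}_1^{\partial S}>\#\mathcal{G}_1^{\partial H}$, which is exactly \eqref{eqn:PalaisSmale}.

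For the first claim I would use that $M$ is homogeneous, hence aligned, so by Theorem~\ref{trm:cell_decomp-forests} each $W_S$ is isomorphic to an affine space. Since $\chi(\C^*)\subset T$ and $T$ is abelian, $W_S$ is $T$-stable, and linearising the $T$-action at the fixed point $S$ presents $W_S\cong\C^{\dim_\C W_S}$ as a $T$-representation with weights $\beta_1,\dots,\beta_{\dim_\C W_S}$. The outgoing edges at $S$ are precisely the one-dimensional $T$-orbits contained in $W_S$ whose closure contains $S$ (note that $S$ is the unique $T$-fixed point of $W_S$, as $\chi$ is generic), and these are the coordinate axes of the linearised chart. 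Because the $T$-action is skeletal (Theorem~\ref{trm:attr-forests-are-GKM}), no two of the $\beta_i$ can be proportional --- otherwise the corresponding coordinate plane through $S$ would contain infinitely many one-dimensional orbits --- so the axes are pairwise distinct orbits and there are exactly $\dim_\C W_S$ of them, giving the first claim.

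The second claim is the heart of the matter, and the step where homogeneity is genuinely needed. Here I would use the combinatorial description of $\dim_\C W_S$ from \cite{LaPu2020} in terms of the aligned coefficient quiver $Q(M,B)$ and the successor-closed subquiver $S$, and show that passing to a fundamental mutation $H=\mu(S)$ --- which slides a connected, predecessor-closed movable part to strictly larger indices --- strictly decreases this count. The gapless hypothesis (H2) of Definition~\ref{def:homog-forest} is what makes the local bookkeeping work: it guarantees that the arrows leaving the movable part stay in step as the part is slid up, so that moving the movable part to higher-indexed vertices removes attracting directions without creating new ones, forcing a net strict decrease. This is the quiver-Grassmannian analogue of the elementary fact that raising a Schubert index lowers the length (equivalently the cell dimension) in the (type $\tt A$) flag and Grassmannian cases. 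I would record alongside this that the function $f(S)=\sum_{b\in S}\mr{wt}(b)$ strictly increases along a fundamental mutation by (AG1) of Definition~\ref{def:attractive-grading}; this is consistent with, but strictly weaker than, the needed dimension drop, and by itself does not force the Palais--Smale inequality.

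The main obstacle is precisely the strict monotonicity in the second claim: as Example~\ref{ex:cohomology-generators-loop-quiver} shows, a generic cocharacter coming from an attractive grading can fail to be Palais--Smale (there one finds an edge $x\to y$ with $\#\mathcal{G}_1^{\partial x}=\#\mathcal{G}_1^{\partial y}$), so the argument cannot be purely formal and must invoke (H2) in an essential way. I expect the cleanest route to be to isolate the local contribution to $\dim_\C W_S$ coming from the vertices touched by the movable part, to compute how this contribution changes under a single elementary fundamental mutation (sliding the part up by one admissible step), and to conclude by decomposing an arbitrary fundamental mutation into such elementary steps and checking that each step drops the dimension by at least one.
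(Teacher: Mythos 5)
Your two-claim skeleton is exactly the paper's: the paper also reduces \eqref{eqn:PalaisSmale} to the identification $\#\mathcal{G}_1^{\partial L}=\dim_\C W_L$ (which it reads off from Theorem~\ref{trm:comb-moment-graph} together with the cell decomposition of \cite{LaPu2020}), followed by the assertion that every fundamental mutation strictly decreases cell dimension. Your geometric justification of the first claim is a reasonable substitute for the paper's combinatorial one, with the caveat that linearisability of the $T$-action on the affine cell $W_S$ is not automatic and has to be extracted from the explicit cell coordinates of \cite{LaPu2020}.

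The genuine gap is the second claim, which you yourself call ``the heart of the matter'': it is never proved. Your paragraph about it consists of intentions (``I would use\ldots'', ``I expect\ldots'') and of the heuristic that (H2) ``removes attracting directions without creating new ones'' --- but that heuristic is a restatement of what has to be shown, not an argument for it. Moreover, the one concrete strategy you propose, namely decomposing an arbitrary fundamental mutation into elementary one-step slides, fails as stated. A fundamental mutation $\mu\colon L\to L'$ moves the movable part from a summand $U_{j_s}$ to a summand $U_{j_t}$ with $j_s<j_t$, and the intermediate configurations need not be successor-closed subquivers of dimension vector $\mathbf{e}$: for $\Delta_1$ and the homogeneous representation $M=A_2^{\oplus 3}$ with $\mathbf{e}=4$, the edge moving the full segment $U_1$ of $L=(U_1\ \mathrm{full},\,U_2\ \mathrm{full},\,\emptyset)$ to the empty segment $U_3$ cannot pass through $U_2$, which is occupied. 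Any directed path from $L$ to $L'$ must first move \emph{other} movable parts out of the way, so an edge is not a composition of elementary edges in your sense, and replacing the decomposition by a path-existence argument would itself be a further unproven combinatorial claim.

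For comparison, the paper needs no reduction at all: for a single arbitrary fundamental mutation it introduces the segment-wise count $h(S_j)$ of outgoing mutations, so that $\dim_\C W_L=\sum_{j\in[d]}h(S_j)$, and verifies three local statements: $h(S_j')\le h(S_j)$ for $j\ne j_s,j_t$; $h(S_{j_s})=h(S_{j_t}')+k$ with $k>0$, because homogeneity and $j_s<j_t$ force the receiving subsegment $S_{j_t}'$ to be a subquiver of the donor $S_{j_s}$; and $h(S_{j_s}')<h(S_{j_t})+k$, since no mutations from $S_{j_s}'$ towards $S_{j_t}'$ survive. Summing these gives $\dim_\C W_{L'}<\dim_\C W_L$ for the given edge directly. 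Until you supply this (or an equivalent) computation, your proposal establishes only the framework of the proof, not the proposition.
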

\begin{proof} 
The structure of the moment graph $\mathcal{G}(X,T,\chi)$ is described in Theorem~\ref{trm:comb-moment-graph}. Hence, the number of arrows starting at a point in the moment graph equals the number of fundamental mutations starting at the corresponding fixed point, and the dimension of the associated cell in $X$. It remains to show that fundamental mutations of the fixed points are dimension decreasing for the corresponding cells if $M$ is homogeneous.

Let $\mu:L \to L'$ be a fundamental mutation and denote the successor closed subsegments (belonging to $L$ and $L'$) in the $d$-many trees of $Q(M,B)$ by $S_1,\dots,S_d$ and $S_1',\dots,S_d'$. We can define a function \(h: \{S_j,S_j' \vert j \in [d]\} \to \Z_{\geq 0}\), counting the mutation relations starting at the respective subsegment of $L$ and $L'$. By Theorem~\ref{trm:comb-moment-graph}, we obtain that 
\[ 
\dim C_L = \sum_{j \in [d]} h(S_j)
\]
and analogously for $L'$. Let $j_s$ and $j_t$ be the indices of the mutation $\mu$ as introduced in Remark~\ref{rem:T-weights-one-dim-orbits}. Hence $S_j'  = S_j$ for all $j \in [d] \setminus \{ j_s,j_t\}$.

Since $M$ is homogeneous and $j_t > j_s$, it follows that the coefficient quiver of $S_{j_t}'$ is a subquiver of the one for $S_{j_s}$. Hence, there exists a $k > 0$ such that $h(S_{j_s}) = h(S_{j_t}')+k$, because $M$ is homogeneous and $j_t > j_s$. ${S}_{j_s}'$ is remaining part of ${S}_{j_s}$ after the mutation $\mu$. We compute $h({S}_{j_s}') < h({S}_{j_t})+k$, since by construction there are no possible relations from ${S}_{j_s}'$ with $S_{j_t}'$. 

For all $j \in [d] \setminus \{ j_s,j_t\}$ we obtain $h(S_j') \leq h(S_j)$, since $M$ is homogeneous and the components remain unchanged. This implies that $\dim C_{L'} < \dim C_L$ holds for all fixed points which are connected by an edge $L \to L'$ in the moment graph. Hence all fundamental mutations are strictly dimension decreasing and the corresponding moment graph satisfies the Palais-Smale property.
\end{proof}
\begin{rem}
If we reorder the points in the coefficient quiver of $M$ from Example~\ref{ex:cohomology-generators-loop-quiver} increasingly by their $\C^*$-weight for the action by $\chi'$, then the corresponding coefficient quiver $Q(M,B')$ is homogeneous. Note that for the basis $B$ induced by $\chi$, the coefficient quiver  $Q(M,B)$ violates property (H2).
\end{rem}
\begin{trm}\label{trm:homg-forest-have-unique-KT-basis}
Let $M\in \mr{rep}_\C(\Delta_n)$ a homogeneous nilpotent representation. If $T$ acts on $X:=\mr{Gr}_\mb{e}(M)$   as in Lemma~\ref{lma:T-action-extends-to-quiver-Grass} and the generic cocharacter $\chi$ is as in Proposition~\ref{prop:homg-forest-are-P-S}, then $H_T^\bullet(X)$ has a unique KT-basis (w.r.t. $\succeq_\chi$).
\end{trm}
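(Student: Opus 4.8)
The plan is to combine the existence result from Theorem~\ref{thm:ExistenceKTBasis} with the uniqueness result from Lemma~\ref{lma:K-T-classes-unique}, both of which have already been established in the paper. The statement asserts two things about $H_T^\bullet(X)$: that a KT-basis exists, and that it is \emph{unique}. These two assertions are handled by two different prior results, and the role of the homogeneity assumption is precisely to bridge them.

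First I would verify that the hypotheses of Theorem~\ref{thm:ExistenceKTBasis} are met. Since $M$ is nilpotent, Theorem~\ref{trm:attr-forests-are-GKM} tells us that $(\mr{Gr}_\mb{e}(M),T)$ is a projective BB-filterable GKM-variety, so $(X,T)$ is BB-filterable. By Corollary~\ref{cor:mutations-provide-acyclic-orientation} the fundamental mutations induce a partial order, equivalently $\mathcal{G}=\mathcal{G}(X,T,\chi)$ contains no oriented cycles, so the second hypothesis of Theorem~\ref{thm:ExistenceKTBasis} holds as well. Applying that theorem then yields the existence of a KT-basis of $H_T^\bullet(X)$ with respect to $\succeq_\chi$. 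This establishes existence.

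For uniqueness, the essential input is that the moment graph is Palais-Smale. This is exactly where homogeneity enters: by Proposition~\ref{prop:homg-forest-are-P-S}, since $M$ is homogeneous nilpotent and $\chi$ is the cocharacter of its attractive grading, the moment graph $\mathcal{G}(X,T,\chi)$ is Palais-Smale. Lemma~\ref{lma:K-T-classes-unique} then applies: for each $x\in X^T$, any two Knutson-Tao classes (with respect to $\succeq_\chi$) corresponding to $x$ agree in every entry. Hence each KT-class is uniquely determined, and so the KT-basis furnished by the existence step is the \emph{only} KT-basis. Combining the two parts gives the claim.

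I do not expect a genuine obstacle here, as the theorem is essentially an assembly of previously proven pieces; the only thing that requires care is checking that the hypotheses align. Specifically, I would make sure the cocharacter $\chi$ used in the existence step and in the uniqueness step is the \emph{same} one, namely the cocharacter of the attractive grading as specified in Proposition~\ref{prop:homg-forest-are-P-S}. The existence result only needs acyclicity, while the uniqueness result needs the stronger Palais-Smale property; since a Palais-Smale orientation is in particular acyclic (as noted in the remark following Definition~\ref{def:P-S-orientation}), both results apply simultaneously to this single choice of $\chi$, and there is no inconsistency between the hypotheses. The homogeneity assumption does the real work by upgrading the orientation from merely acyclic to Palais-Smale, which is what unlocks uniqueness.
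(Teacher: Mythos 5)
Your proposal is correct and follows essentially the same route as the paper's own proof: existence via Theorem~\ref{thm:ExistenceKTBasis} (with BB-filterability from Theorem~\ref{trm:attr-forests-are-GKM}) and uniqueness via Lemma~\ref{lma:K-T-classes-unique} (with the Palais-Smale property from Proposition~\ref{prop:homg-forest-are-P-S}). Your extra care in justifying acyclicity (via Corollary~\ref{cor:mutations-provide-acyclic-orientation}, or equivalently since Palais-Smale implies acyclic) and in checking that the same cocharacter $\chi$ serves both steps is implicit in the paper but is a welcome clarification.
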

\begin{proof}
The existence of a KT-basis follows from Theorem~\ref{thm:ExistenceKTBasis} because $(X,T)$ is a BB-filterable GKM-variety by Theorem~\ref{trm:attr-forests-are-GKM}. Now, Lemma~\ref{lma:K-T-classes-unique} implies the uniqueness of this KT-basis since $\mathcal{G}(X,T,\chi)$ is Palais-Smale by Proposition~\ref{prop:homg-forest-are-P-S}. 
\end{proof}
\begin{rem}From now on, whenever we consider a homogeneous nilpotent $\Delta_n$-representation $M$, and a quiver Grassmannian $X=\mr{Gr}_\mb{e}(M)$, we implicitely assume that it is equipped with a $T$-action as in  Lemma~\ref{lma:T-action-extends-to-quiver-Grass} and that we fixed a cocharacter $\chi$ such that the corresponding moment graph $\mathcal{G}(X,T,\chi)$ is Palais-Smale.
\end{rem}
\section{Permutation Action}\label{sec:Permutation-Action}
In this section we study the action of the group of permutations inside the group of quiver representation automorphisms which exchanges the isomorphic indecomposable summands of the quiver representation. The action on the quiver representation induces an action on the corresponding quiver Grassmannians and the moment graphs from Section~\ref{subsec:Moment-Graph}. Moreover it induces a geometric action on the equivariant cohomology as studied in Section~\ref{subsec:Geo-action-cohomology}.

Assume that the nilpotent representation $M \in \mr{rep}_\C(\Delta_n)$ is isomorphic to a direct sum of $d$-many indecomposable summands $U_1, \ldots, U_d \in \mr{rep}_\C(Q)$. If $d_0\leq d$ is the number of distinct isomorphism classes of the $U_i$'s, we can renumber them in such a way that there exist $k_1, \ldots, k_{d_0}\in\Z_{\geq 1}$ such that $d=k_1+\ldots+k_{d_0}$ and 
\[
U_i\cong U_{i'}\quad \Leftrightarrow\quad \exists j\in[d_0] \hbox{ such that }i,i'\in[k_1+\ldots+k_{j-1}+1, k_1+\ldots+k_j],
\]
where by convention we set $k_0=0$. For $j\in[d_0]$, we denote by $k_j':=k_1+\ldots+k_j$, and by
$\underline{k'}$ the set $\{k_1', k_2', \ldots, k_{d_0}'=d\}$. Thus, we have
\begin{equation}\label{eqn:DirectSumDecp}
M \cong \bigoplus_{j \in [d_0]} U_{k_j'} \otimes \C^{k_j}.
\end{equation}

 Notice that any multiplicity space $\C^{k_j}$ is equipped with an action of the symmetric group $\mathfrak{S}_{k_j}$ which permutes the coordinates and we obtain in this way an action of $\mathfrak{S}_{k_1}\times\ldots\times \mathfrak{S}_{k_{d_0}}$ on $M$ via quiver representation automorphisms. In what follows, we will investigate the induced action on quiver Grassmannians and on their cohomology rings. 
 
It will be convenient to realise the group $\mathfrak{S}_{k_1}\times\ldots\times \mathfrak{S}_{k_{d_0}}$ as a subgroup of $\mathfrak{S}_d$: let $\mathfrak{S}_{\underline{k}}\subseteq \mathfrak{S}_d$ be the subgroup of the symmetric group over $d$ letters which stabilises the following subsets of $[d]$: 
\[
[k'_1], \ [k'_1+1,k_2'] \ldots,\ [k_{d_0-1}'+1,d]. 
\]
 \begin{rem}
 Notice that a generic point in the representation variety for $\Delta_n$ will have indecomposable summands pairwise not isomorphic, and hence the group $\mathfrak{S}_{\underline{k}}$ will be trivial. The representations which have interesting permutation group actions are hence special, but still a very big family, considering that they comprise the whole family of $n$-step flag varieties in $\C^m$ (corresponding to Quiver Grassmannians for the representation $U_1(n)\otimes\C^m$).
 \end{rem}

Consider now the vector space obtained by taking the direct sum of the multiplicity spaces $\C^{k_1}\oplus \C^{k_2}\oplus\ldots\oplus\C^{k_{d_0}}\cong \C^d$, and let $(e_1, \ldots, e_{k_1},e_{k_1+1}, \ldots e_{k_1+k_2}, \ldots, e_d)$ be the standard basis of $\C^{d}$, ordered in such a way that $(e_{k_{j-1}'+1}, \ldots, e_{k_j'})$ is the standard basis of $\C^{k_j}$ (the multiplicity space of $U_j$). Observe that in this way we are identifying $U_i$ with $U_{k_j'}\otimes e_{i}$ if $i\in[k_{j-1}'+1,k_j']$.

Let $Q(M,B)$ be the coefficient quiver of $M$, where $B$ has been chosen in such a way that  the connected components of $Q(M,B)$ are in bijection with the indecomposable nilpotents $U_1, \ldots, U_d$.  We also write $U_i$ for  $Q(U_i,B) \subset Q(M,B)$ and (under the isomorphism \eqref{eqn:DirectSumDecp}) we obtain 
\[ B\cap U_i=\Big\{b_t\otimes e_i  \ \Big\vert
\begin{array}{c}
\hbox{if }i\in[k_{j-1}'+1,k_j']\\
 \hbox{and }(b_t)_{t=1,\ldots, \ell_j}\ \hbox{ is a basis of }U_{k_j'}
\end{array} \Big\}.  \] 
Then $\mathfrak{S}_{\underline{k}}$ acts on $ M \cong \bigoplus_{j \in [d_0]}  U_{k_j'} \otimes \C^{k_j}$ via
\[
\sigma(u\otimes e_i)=u\otimes e_{\sigma^{-1}(i)}, \qquad u\in B\cap U_{k_j'}, \ i\in[k_{j-1}'+1, k_j'].
\]
Thus, any $\sigma\in\mathfrak{S}_{\underline{k}}$ induces an oriented graph automorphism of $Q(M,B)$ having the property that $\sigma(U_{i})=U_{\sigma^{-1}(i)}$.
\begin{ex}\label{ex:Permutation-Action-Loop-Quiver}
Let $M$ be as in Example~\ref{ex:homog-cyclic-rep}, then every quiver Grassmannian $\mr{Gr}_\mb{e}(M)$ admits an action by $\prod_{i \in \Z_n}  \mathfrak{S}_{k_i}$.
\end{ex}
\begin{ex}\label{ex:PermActionFlagVar1}
Let us consider the $\Delta_n$-representation $\bigoplus_{j=1}^{m} U_1(n)\simeq U_1(n) \otimes \C^m$. 
By the previous discussion we get an action of  $\mathfrak{S}_m$ on it. In particular, this is a special case of the previous example.
\end{ex}
Let $T=(\C^*)^1\times (\C^*)^d$ be an algebraic torus of dimension $d+1$. Then $\mathfrak{S}_{\underline{k}}$ acts on it as follows: for any $\sigma\in\mathfrak{S}_{\underline{k}}$ and for any $(\gamma_0,\gamma_1, \ldots, \gamma_d)\in T$ we set
\[
\sigma \cdot (\gamma_0,\gamma_1, \ldots, \gamma_d, ):=(\gamma_0,\gamma_{\sigma(1)}, \ldots, \gamma_{\sigma(d)}).
\]
This also induces an $\mathfrak{S}_{\underline{k}}$-action on $\mathfrak{X}^*(T)$ by setting $\sigma(\alpha)(t):=\alpha(\sigma^{-1}(t))$ for any $\sigma\in\mathfrak{S}_{\underline{k}}$, $\alpha\in\mathfrak{X}^*(T)$ and $t\in T$.

If $\sigma\in\mathfrak{S}_{\underline{k}}$ and $t\in T$, we will very often denote the element $\sigma  \cdot t$ by $t^\sigma$. We can now consider the semi-direct product $\mathfrak{S}_{\underline{k}}\ltimes T$ with commutation relation given by
\[
\sigma t=t^\sigma \sigma\qquad \forall\sigma\in\mathfrak{S}_{\underline{k}}, \ t\in T.
\]
\begin{lem}\label{lem:geomSkAction}Let $M \in \mr{rep}_\C(\Delta_n)$ be nilpotent. The group $\mathfrak{S}_{\underline{k}}\ltimes T$ acts on $\mr{Gr}_\mb{e}(M)$ via
\[(\sigma t) N=\sigma(t. N),\]
where $.$ denotes the torus action from Lemma \ref{lma:T-action-extends-to-quiver-Grass}.
\end{lem}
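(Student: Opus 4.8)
The statement to prove is Lemma~\ref{lem:geomSkAction}: that the formula $(\sigma t)N = \sigma(t.N)$ gives a well-defined action of the semidirect product $\mathfrak{S}_{\underline{k}}\ltimes T$ on $\mr{Gr}_\mb{e}(M)$. The plan is to verify three things in order: first, that each $\sigma\in\mathfrak{S}_{\underline{k}}$ genuinely sends a subrepresentation $N$ of $M$ with dimension vector $\mb{e}$ to another such subrepresentation (so that $\mathfrak{S}_{\underline{k}}$ acts on $\mr{Gr}_\mb{e}(M)$ at all); second, that the formula respects the group law of the semidirect product, i.e.\ that applying $\sigma_1 t_1$ followed by $\sigma_2 t_2$ agrees with applying the product $(\sigma_2 t_2)(\sigma_1 t_1)$ computed via the commutation relation $\sigma t = t^\sigma\sigma$; and third, that the identity acts trivially. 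Let me sketch each.

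**Well-definedness of the $\mathfrak{S}_{\underline{k}}$-action on the Grassmannian.**
Since $\sigma$ permutes the isomorphic indecomposable summands, it is by construction a quiver representation automorphism of $M$: the maps $\sigma_i : M^{(i)}\to M^{(i)}$ commute with every structure map $M_a$, because $\sigma$ merely relabels the copies $U_{k_j'}\otimes e_i$ among themselves and each $M_a$ acts identically on every copy of a fixed indecomposable. First I would record that $\sigma$ is thus a $\Delta_n$-automorphism, so for any subrepresentation $N\subseteq M$ the image $\sigma(N)$ is again a subrepresentation: the inclusions $M_a N^{(s_a)}\subseteq N^{(t_a)}$ are transported by the automorphism to $M_a (\sigma N)^{(s_a)}\subseteq (\sigma N)^{(t_a)}$. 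Moreover $\sigma_i$ is a linear isomorphism of each $M^{(i)}$, so $\dim_\C (\sigma N)^{(i)} = \dim_\C N^{(i)}$ and the dimension vector is preserved; hence $\sigma(N)\in\mr{Gr}_\mb{e}(M)$. The torus action $t.N\in\mr{Gr}_\mb{e}(M)$ is already known to be well defined by Lemma~\ref{lma:T-action-extends-to-quiver-Grass}, so the composite $\sigma(t.N)$ lands in $\mr{Gr}_\mb{e}(M)$.

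**Compatibility with the semidirect product law.**
The heart of the verification is the interaction between $\sigma$ and $t$. I would compute $(\sigma_2 t_2)\big((\sigma_1 t_1)N\big)=(\sigma_2 t_2)\big(\sigma_1(t_1.N)\big)=\sigma_2\big(t_2.(\sigma_1(t_1.N))\big)$ and show this equals $\big((\sigma_2 t_2)(\sigma_1 t_1)\big)N$, where the product in $\mathfrak{S}_{\underline{k}}\ltimes T$ is $(\sigma_2 t_2)(\sigma_1 t_1)=\sigma_2\sigma_1\,(t_2^{\sigma_1} t_1)$ obtained by pushing $t_2$ past $\sigma_1$ using $\sigma t=t^\sigma\sigma$. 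The key identity to establish is the intertwining relation
\[
t.(\sigma N)=\sigma\big((t^{\sigma}).N\big)\qquad\text{for all }\sigma\in\mathfrak{S}_{\underline{k}},\ t\in T,\ N\in\mr{Gr}_\mb{e}(M),
\]
which says that permuting then scaling equals scaling by the permuted torus element then permuting. This I expect to be the main obstacle, since it requires tracking how the diagonal torus weights $\gamma_j$ attach to the summands $U_j$: the action $\gamma.(b_{j,p})=\gamma_0^p\gamma_j\,b_{j,p}$ uses the $j$-th coordinate on the $j$-th summand, while $\sigma$ sends the $j$-th summand to the $\sigma^{-1}(j)$-th, so the computation hinges precisely on the definition $\sigma\cdot(\gamma_0,\dots,\gamma_d)=(\gamma_0,\gamma_{\sigma(1)},\dots,\gamma_{\sigma(d)})$ and $\sigma(\alpha)(t)=\alpha(\sigma^{-1}(t))$. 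Once this intertwining relation is checked on basis vectors $b_{j,p}\otimes e_i$ (a direct but bookkeeping-heavy index computation), associativity follows formally, and triviality of the identity $(\mr{id},1)$ is immediate from $\mr{id}(1.N)=N$. I would therefore state the intertwining relation as the crucial lemma-within-the-proof, verify it on basis vectors, and then assemble the three points into the action axioms.
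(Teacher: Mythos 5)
Your proposal matches the paper's proof in both structure and substance: the paper likewise first shows that each $\sigma\in\mathfrak{S}_{\underline{k}}$, being an automorphism of the $\Delta_n$-representation $M$, preserves subrepresentations and dimension vectors (hence maps $\mr{Gr}_\mb{e}(M)$ to itself), and then reduces everything to the intertwining identity $\sigma(t.(b\otimes e_i))=t^\sigma.(\sigma(b\otimes e_i))$, verified by exactly the basis-vector computation you describe. The only discrepancy is whether the exponent in the intertwining relation carries $\sigma$ or $\sigma^{-1}$, which is a left/right-action bookkeeping convention (the paper's own formulas are loose on this point) rather than a gap.
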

\begin{proof}
First of all, we observe that $N\in\ \mr{Gr}_\mb{e}(M)$ implies $\sigma(N)\in \mr{Gr}_\mb{e}(M)$ for any $\sigma\in \mathfrak{S}_{\underline{k}}$: $\sigma$ is by definition an automorphism of the $\Delta_n$-representation $M$ and hence $\sigma=(\sigma^{(i)})_{i\in \Z_n}$, where $\sigma^{(i)}:M^{(i)}\rightarrow M^{(i)}$ is a vector space isomorphism for any $i\in \Z_n$ and $\sigma^{(i+1)} \circ M_a=M_a\circ \sigma^{(i)}$ for any edge $a:i\to i+1$, therefore $\dim_\C(\sigma^{(i)}N^{(i)})=\dim_\C N^{(i)}$ and 
\[
M_a((\sigma N)^{(i)})=M_a(\sigma^{(i)}N^{(i)})=\sigma^{(i+1)}(M_aN^{(i)})\subseteq\sigma^{(i+1)}(N^{(i)})=(\sigma N)^{(i+1)}.\]
To conclude it is enough to show that for any $\sigma\in\mathfrak{S}_{\underline{k}}$ and $t=(\gamma_0,\gamma_1, \ldots,\gamma_d)\in T$ it holds $\sigma(t.( b\otimes e_i))=t^\sigma . (\sigma (b\otimes e_i))$ for all $b\in U_{k_j'} \cap B$ and $i\in[k_{j-1}'+1, k_j']$. Notice that for $b\in U_{k_j'} \cap B$ there exists a unique $p$ such that, in the notation of \S\ref{eqn:indec-basis}, $b\otimes e_i$ for $b=b_{k_j',p}$ gets identified with $b_{i,p}$ under the isomorphism \eqref{eqn:DirectSumDecp}. We hence have
\begin{align*}
\sigma(t.(b\otimes e_i))&=\sigma( \gamma_0^p\gamma_i b\otimes e_{i})\\
&=\gamma_0^p\gamma_i  b\otimes e_{\sigma^{-1}(i)}\\
&=\gamma_0^p\gamma_i  b\otimes e_{\sigma^{-1}(i)}\\
&=t^\sigma .( b\otimes e_{\sigma^{-1}(i)})\\
&=t^\sigma. \sigma(b\otimes e_i).
\end{align*}
\end{proof}
The action in the above lemma restricts to an  $\mathfrak{S}_{\underline{k}}$-action on $\mr{Gr}_\mb{e}(M)$. 
We denote by $\overline{\mathcal{G}(X,T)}$ the underlying unoriented graph of the moment graph $\mathcal{G}(X,T, \chi)$.
\begin{cor}Let $M \in \mr{rep}_\C(\Delta_n)$ be nilpotent.
The $\mathfrak{S}_{\underline{k}}$-action on $X=\mr{Gr}_\mb{e}(M)$ induces an automorphism of the graph $\overline{\mathcal{G}(X,T)}$.
\end{cor}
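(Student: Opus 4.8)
The plan is to show that each $\sigma\in\mathfrak{S}_{\underline{k}}$ sends vertices to vertices and edges to edges of $\overline{\mathcal{G}(X,T)}$, compatibly with incidence. Recall that the vertices of the moment graph are the $T$-fixed points $X^T$ and that the (unoriented) edges are the closures of one-dimensional $T$-orbits. By Lemma~\ref{lem:geomSkAction} each $\sigma$ acts on $X$ as a variety automorphism, and the semidirect-product relation $\sigma t=t^\sigma\sigma$ translates, on the level of the action on $X$, into the twisted equivariance
\[
\sigma(t.N)=t^\sigma.\sigma(N)\qquad\forall\,t\in T,\ N\in X.
\]
Substituting $t^{\sigma^{-1}}$ for $t$ and using $(t^{\sigma^{-1}})^\sigma=t$, this is equivalent to $t.\sigma(N)=\sigma\big(t^{\sigma^{-1}}.N\big)$. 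The one fact about the twist that I would use repeatedly is that $t\mapsto t^\sigma$ is an automorphism of $T$, so that $t^\sigma$ (resp.\ $t^{\sigma^{-1}}$) ranges over all of $T$ as $t$ does.

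First I would check that $\sigma$ permutes the vertices. If $x\in X^T$, then for every $t\in T$ we have $t.\sigma(x)=\sigma\big(t^{\sigma^{-1}}.x\big)=\sigma(x)$, since $t^{\sigma^{-1}}.x=x$; hence $\sigma(x)\in X^T$. As $\sigma^{-1}\in\mathfrak{S}_{\underline{k}}$ provides a two-sided inverse, this gives a bijection $X^T\to X^T$. Next I would treat the edges: for a one-dimensional $T$-orbit $E=T.p$, the twisted equivariance gives $\sigma(E)=\{\sigma(t.p)\mid t\in T\}=\{t^\sigma.\sigma(p)\mid t\in T\}=T.\sigma(p)$, so $\sigma(E)$ is again a single $T$-orbit; and since $\sigma$ is an isomorphism of varieties it preserves dimension, whence $\dim\sigma(E)=1$ and $\sigma(E)$ is again an edge. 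Finally, as $\sigma$ is a homeomorphism it commutes with taking closures, so if $\overline{E}=E\cup\{x,y\}$ with $x,y\in X^T$, then $\overline{\sigma(E)}=\sigma(\overline{E})=\sigma(E)\cup\{\sigma(x),\sigma(y)\}$. Thus $\sigma$ carries the edge with endpoints $x,y$ to the edge with endpoints $\sigma(x),\sigma(y)$, preserving incidence; combined with the bijection on vertices, this yields an automorphism of the unoriented graph $\overline{\mathcal{G}(X,T)}$.

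The only genuinely delicate point — and precisely the reason the statement concerns the \emph{unoriented} graph rather than the oriented moment graph of Definition~\ref{def:moment-graph} — is that $\sigma$ need not preserve orientation or labels. The orientation is determined by the limits $\lim_{\lambda\to0}\chi(\lambda).p$, and under the twist these become $\lim_{\lambda\to0}\chi(\lambda).\sigma(p)=\sigma\big(\lim_{\lambda\to0}\chi^{\sigma^{-1}}(\lambda).p\big)$, governed by the twisted cocharacter $\chi^{\sigma^{-1}}$ instead of $\chi$; similarly the edge label $\alpha_E$ is replaced by $\sigma(\alpha_E)$. I would therefore stress that incidence, and hence the underlying unoriented graph, is what is preserved, while orientation and labels are merely transported along $\sigma$ — which is exactly the content of the corollary and sets up the study of the induced (twisted) action on equivariant cohomology.
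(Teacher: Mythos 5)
Your proof is correct and follows essentially the same route as the paper: both rest on Lemma~\ref{lem:geomSkAction}, i.e.\ on the twisted equivariance $\sigma(t.N)=t^{\sigma}.\sigma(N)$ showing that the $\mathfrak{S}_{\underline{k}}$-action normalizes the $T$-action, from which it follows that $\sigma$ permutes $X^T$ and sends one-dimensional $T$-orbits (and their closures) to one-dimensional $T$-orbits. The paper states this more tersely via the edge correspondence \eqref{eqn:SkActionGraph}, while you spell out the fixed-point, orbit, and closure checks and the caveat about orientation and labels, but the underlying argument is the same.
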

\begin{proof}By Lemma~\ref{lem:geomSkAction}, the $\mathfrak{S}_{\underline{k}}$-action normalizes the torus action. From the computations in the proof of this lemma, we see that
\begin{equation}\label{eqn:SkActionGraph}
L\stackrel{\alpha}{-\!\!\!-\!\!\!-\!\!-} L'\in \overline{\mathcal{G}(X,T)}_1\quad\Leftrightarrow\quad \sigma(L)\stackrel{\sigma(\alpha)}{-\!\!\!-\!\!\!-\!\!-} \sigma(L')\in \overline{\mathcal{G}(X,T)}_1 \ (\forall\ \sigma\in\mathfrak{S}_{\underline{k}}).
\end{equation}
Hence every $\sigma$ sends torus fixed points to torus fixed points and one-dimensional torus orbits to one-dimensional torus orbit.
\end{proof}
\begin{rem}In \cite{Kaji2015}, Kaji introduced the notion of moment graphs admitting Coxeter group symmetries (see \cite[Definition 4.1]{Kaji2015}). He assumed the regularity of the graph, but the same definition makes sense without the regularity assumption. Thus, the above corollary tells us that the moment graph $\overline{\mathcal{G}(X,T)}$  admits Coxeter symmetries. 
\end{rem}
\begin{ex}\label{ex:S2ActionLoopA2A1A1}Let us consider the graph $\mathcal{G}(X,T,\chi')$ from Example \ref{ex:cohomology-generators-loop-quiver}. Since $M=A_2\oplus A_1\oplus A_1$ we have an $\mathfrak{S}_2$-action on $X=\mr{Gr}_{2}(M)$ induced by the exchange of the two copies of $A_1$. The corresponding automorphism of $\overline{\mathcal{G}(X,T)}$ is the automorphism which exchanges  the two central vertices $L_1'$ and $L_2'$, and fixes the other two.
\end{ex}
\subsection{The Permutation Action on Nilpotent Representations of the Cycle}
In this subsection, we collect some properties of the $\mathfrak{S}_{\underline{k}}$-action.
\begin{lem}\label{lem:reflectionEdge}Let $M \in \mr{rep}_\C(\Delta_n)$ be homogeneous and nilpotent, $X=\mr{Gr}_\mb{e}(M)$ and  $\mathcal{G}=\mathcal{G}(X,T,\chi)$. Let $h, l\in[k_{j-1}'+1,k_j']$ with $h<l$ (for some $j\in[d_0]$). Let $L\in X^T$ be such that  $\#\mathcal{G}_1^{\partial\sigma_{h,l}(L)}<\#\mathcal{G}_1^{\partial L}$, where $\sigma_{h,l}:=(h,l)\in\mathfrak{S}_{\underline{k}}$ is the transposition which exchanges $h$ and $l$. Then $\sigma_{h,l}(L)$ is a fundamental mutation of $L$, and $T$ acts on the corresponding one-dimensional orbit via the character $\epsilon_l-\epsilon_h$.
\end{lem}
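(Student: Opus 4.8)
The plan is to translate the statement entirely into the combinatorics of successor-closed subquivers and mutations, exploiting that $L$ and $\sigma_{h,l}(L)$ differ in a very controlled way. First I would recall, via Corollary~\ref{cor:Fixed-Points-as-suc-closed-sub-quiv}, that the fixed point $L$ is encoded by a successor-closed subquiver of $Q(M,B)$, and that its restriction to each indecomposable summand $U_i$ — a single directed path in $Q(M,B)$ — is a suffix $S_i=\{b_{i,p_i},b_{i,p_i+1},\ldots,b_{i,\ell_i-1}\}$ determined by a starting index $p_i$. Since $h$ and $l$ lie in the same block $[k_{j-1}'+1,k_j']$, the summands $U_h$ and $U_l$ are identical copies of the model indecomposable $U_{k_j'}$, so that $\ell_h=\ell_l$, and by the explicit description of the action the permutation $\sigma_{h,l}$ preserves the position index, i.e. $\sigma_{h,l}(b_{h,p})=b_{l,p}$ and $\sigma_{h,l}(b_{l,p})=b_{h,p}$. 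Consequently $\sigma_{h,l}(L)$ is obtained from $L$ by exchanging the two starting indices $p_h\leftrightarrow p_l$ and leaving every other summand untouched.

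Next I would show that $L$ and $\sigma_{h,l}(L)$ are mutation related in the sense of Definition~\ref{defn:Mutation-Relation}. If $p_h=p_l$ then $\sigma_{h,l}(L)=L$, which is incompatible with the strict inequality $\#\mathcal{G}_1^{\partial\sigma_{h,l}(L)}<\#\mathcal{G}_1^{\partial L}$; hence $p_h\neq p_l$. Assuming the larger suffix sits on $U_h$ (i.e. $p_h<p_l$), a direct comparison shows that the only positions whose occupancy changes are those indexed by $p_h,\ldots,p_l-1$, which are emptied on $U_h$ and filled on $U_l$. The corresponding vertices $\{b_{h,p_h},\ldots,b_{h,p_l-1}\}$ form a connected, predecessor-closed subquiver of $S_h$ (the lowest-index vertex $b_{h,p_h}$ has no predecessor lying in $S_h$), hence a movable part, and $\sigma_{h,l}(L)$ is exactly the result of repositioning this single movable part from $U_h$ to $U_l$; the symmetric case $p_h>p_l$ is identical with the roles of $U_h$ and $U_l$ interchanged. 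In either case the two fixed points differ by the position of exactly one movable part, so they are connected by an edge of $\mathcal{G}$.

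It then remains to fix the orientation of this edge and to compute its label. Since $M$ is homogeneous, $\mathcal{G}(X,T,\chi)$ is Palais-Smale by Proposition~\ref{prop:homg-forest-are-P-S}, so every oriented edge strictly decreases the number of outgoing edges; combined with the hypothesis $\#\mathcal{G}_1^{\partial\sigma_{h,l}(L)}<\#\mathcal{G}_1^{\partial L}$, this forces the edge to be oriented $L\to\sigma_{h,l}(L)$, i.e. $\sigma_{h,l}(L)$ is a fundamental mutation of $L$. In particular this excludes the case $p_h>p_l$, for which the movable part would travel from $U_l$ to $U_h$ and, $l$ being the larger summand index, the inequality would reverse; hence $p_h<p_l$ and the part moves from $U_h$ to $U_l$. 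Finally I would invoke the character formula \eqref{eqn:Characters-in-moment-graph} from Remark~\ref{rem:T-weights-one-dim-orbits}: the terminal vertex of the moved part is $b_{h,p_l-1}$ before and $b_{l,p_l-1}$ after the mutation, so in the notation there $j_s=h<l=j_t$ and $k=k'=p_l-1$; the coefficient $k'-k$ of $\delta$ vanishes and the label is $\epsilon_l-\epsilon_h$, as claimed.

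I expect the combinatorial bookkeeping of the second paragraph to be the main obstacle: one must check carefully that swapping two nested suffixes relocates a \emph{single} movable part (rather than several), that this part is genuinely connected and predecessor-closed, and that $\sigma_{h,l}$ preserves the position index so that $k=k'$ — the last point being precisely the reason the edge label carries no $\delta$-contribution and reduces to the reflection character $\epsilon_l-\epsilon_h$, matching Tymoczko's description in the flag variety case.
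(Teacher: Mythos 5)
Your proposal is correct and follows essentially the same route as the paper's own proof: establish the mutation relation from the fact that $U_h$ and $U_l$ carry isomorphic equioriented strings, use homogeneity (via Proposition~\ref{prop:homg-forest-are-P-S}) together with the hypothesis $\#\mathcal{G}_1^{\partial\sigma_{h,l}(L)}<\#\mathcal{G}_1^{\partial L}$ to force the orientation $L\to\sigma_{h,l}(L)$ and hence the inequality between segment lengths, and then read off the label from the character formula of Theorem~\ref{trm:comb-moment-graph}. Your version merely makes the suffix-index bookkeeping (connectedness and predecessor-closedness of the moved part, and the cancellation $k=k'$ killing the $\delta$-term) explicit where the paper leaves it implicit.
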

\begin{proof}
We start by showing that $\sigma_{h,l}(L)$ and $L$ are in mutation relation: By assumption $U_h$ and $U_l$ are isomorphic summands of a nilpotent $M\in \mr{rep}_\C(\Delta_n)$. Hence their segments in $Q(M,B)$ are isomorphic equioriented strings of the same length which both end over the same $i \in \Z_n$. For $j \in [d]$ let $S_j$ and $S_j'$ denote the segments of $L$ and $\sigma_{h,l}(L)$ in $Q(U_j,B)$. Since $\sigma_{h,l}$ exchanges the subsegments in $Q(U_h,B)$ and $Q(U_h,B)$ we obtain $S_j= S_j'$ for all $j \in [d]\setminus \{ h,l\}$, and $\ell(S_h)= \ell(S_l')$ and $\ell(S_l)= \ell(S_h')$, where $\ell(S_j)$ denotes the length of segment $S_j$. This implies that $L$ and $\sigma_{h,l}(L)$ are in mutation relation via the movement of an equiorented string of length $\vert \ell(S_h) - \ell(S_l) \vert$.

Since $M$ is homogeneous, $G$ is Palais-Smale by Propostion~\ref{prop:homg-forest-are-P-S}. Thus the relation has to be oriented from $L$ to $\sigma_{h,l}(L)$ because $\#\mathcal{G}_1^{\partial\sigma_{h,l}(L)}<\#\mathcal{G}_1^{\partial L}$ holds by assumption. This also implies that $ \ell(S_h) > \ell(S_l)$. Hence we can view $\sigma_{h,l}$ acting on $L$ as moving the predecessor closed subsegment of length $ \ell(S_h)- \ell(S_l)$ from $S_h$ to $S_l$. and the label of the corresponding edge in the moment graph is computed as $\epsilon_l-\epsilon_h$ by Theorem~\ref{trm:comb-moment-graph}.
\end{proof}

\begin{ex}\label{ex:flag-variety} If we take $m=n+1$ in Example~\ref{ex:PermActionFlagVar1} and $\mb{e}=(1,2,\ldots, n)$ then $\mr{Gr}_\mb{e}(M)\simeq \mathcal{F}l_{n+1}$, the variety of complete flags in $\mathbb{C}^{n+1}$. In this case, every edge is induced by some transposition $\sigma_{h,l} \in \mathfrak{S}_{n+1}$. 
\end{ex}
In general, not every edge of $\mathcal{G}(X,T,\chi)$ is induced by a transposition $\sigma_{h,l}\in\mathfrak{S}_{\underline{k}}$.
\begin{ex}Let us consider the graph $\mathcal{G}=\mathcal{G}(X,T,\chi')$ from Example \ref{ex:cohomology-generators-loop-quiver} and the automorphism $\sigma$ of the underlying unoriented graph from Example \ref{ex:S2ActionLoopA2A1A1}. In the notation of the previous lemma, $\sigma=\sigma_{2,3}$. Moreover, $\sigma_{2,3}(L_2')=L_1'$ and $L_2'\stackrel{\epsilon_3-\epsilon_2}{\rightarrow}L_1'\in \mathcal{G}_1$.
\end{ex}
From now on, it will be convenient to write  $\sigma_i$ for $\sigma_{i,i+1}$. Observe that $\sigma_i\in\mathfrak{S}_{\underline{k}}$ if and only if $i\in[d]\setminus \underline{k}'$.
\begin{lem}\label{lem:LengthSimpleRefln}Let $M \in \mr{rep}_\C(\Delta_n)$ be homogeneous and nilpotent. Let $X=\mr{Gr}_\mb{e}(M)$ and  $\mathcal{G}=\mathcal{G}(X,T,\chi)$. Let $i\in[d]\setminus\underline{k'}$ and let $L\in X^T$ be such that $\sigma_i (L)\prec L$. Then $\#\mathcal{G}_1^{\partial \sigma_{i}(L)}=\#\mathcal{G}_1^{\partial L}-1$.
\end{lem}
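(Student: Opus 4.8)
The plan is to reduce the statement, via the Palais-Smale property and Lemma~\ref{lem:reflectionEdge}, to a single local count between the two isomorphic summands $U_i$ and $U_{i+1}$. First, since $M$ is homogeneous, $\mathcal{G}$ is Palais-Smale by Proposition~\ref{prop:homg-forest-are-P-S}, so the number of outgoing edges strictly decreases along every oriented edge; as $\sigma_i(L)\prec L$ there is an oriented path from $L$ to $\sigma_i(L)$, whence $\#\mathcal{G}_1^{\partial\sigma_i(L)}<\#\mathcal{G}_1^{\partial L}$. This is precisely the hypothesis of Lemma~\ref{lem:reflectionEdge} with $h=i$, $l=i+1$, which yields that $\sigma_i(L)$ is a fundamental mutation of $L$, with edge label $\epsilon_{i+1}-\epsilon_i$, and that $\ell(S_i)>\ell(S_{i+1})$, where $S_j$ denotes the segment of $L$ on the summand $U_j$.

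Next I would use the decomposition of the out-degree into local contributions. By Theorem~\ref{trm:comb-moment-graph} together with the analysis in the proof of Proposition~\ref{prop:homg-forest-are-P-S}, for any fixed point $P$ one has $\#\mathcal{G}_1^{\partial P}=\sum_{a<b}\nu_P(a,b)$, where $\nu_P(a,b)$ counts the fundamental mutations of $P$ moving a predecessor-closed subsegment from $U_a$ into $U_b$. The crucial structural input, which I would draw from gaplessness (H2), is that a single fundamental mutation only involves its source and its target summand; hence $\nu_P(a,b)$ is local, depending only on the isomorphism types of $U_a,U_b$ and on the two lengths $\ell(S_a),\ell(S_b)$.

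With this in hand, I would compare $L$ and $\sigma_i(L)$, which differ only by interchanging the segments on the isomorphic adjacent summands $U_i\cong U_{i+1}$, so that the pair of lengths $\{\ell(S_i),\ell(S_{i+1})\}$ is preserved while the labels $i,i+1$ are swapped and $S_j'=S_j$ for $j\notin\{i,i+1\}$. Writing $\#\mathcal{G}_1^{\partial L}-\#\mathcal{G}_1^{\partial\sigma_i(L)}=\sum_{a<b}\bigl(\nu_L(a,b)-\nu_{\sigma_i(L)}(a,b)\bigr)$ and invoking locality together with $U_i\cong U_{i+1}$, every term with $(a,b)\neq(i,i+1)$ cancels: pairs disjoint from $\{i,i+1\}$ are literally unchanged, while for pairs $(a,i),(a,i+1)$ with $a<i$ (and symmetrically $(i,b),(i+1,b)$ with $b>i+1$) the two summands are merely exchanged when $\ell(S_i)$ and $\ell(S_{i+1})$ swap. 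Hence $\#\mathcal{G}_1^{\partial L}-\#\mathcal{G}_1^{\partial\sigma_i(L)}=\nu_L(i,i+1)-\nu_{\sigma_i(L)}(i,i+1)$.

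It then remains to evaluate this single local count. Since $U_i$ and $U_{i+1}$ are isomorphic their segments share a terminal vertex, so a fundamental mutation from $U_i$ to $U_{i+1}$ must move the top predecessor-closed part of $S_i$ that aligns the two terminal vertices; this is admissible in exactly one way, and only when the source segment is strictly longer than the target. Thus $\nu_P(i,i+1)=1$ when $\ell(S_i)>\ell(S_{i+1})$ in $P$ and $\nu_P(i,i+1)=0$ otherwise; applied to $L$ (where $\ell(S_i)>\ell(S_{i+1})$) and to $\sigma_i(L)$ (where the inequality reverses) this gives $1-0=1$, proving $\#\mathcal{G}_1^{\partial\sigma_i(L)}=\#\mathcal{G}_1^{\partial L}-1$. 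The step I expect to be the main obstacle is justifying the locality of $\nu_P(a,b)$ and the ``exactly one admissible movable part'' claim for adjacent isomorphic summands; both should follow from a careful reading of which connected predecessor-closed subsegments can be moved within the aligned, gapless coefficient quiver, but they require precise bookkeeping.
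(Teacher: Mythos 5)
Your opening steps are correct and follow the same strategy as the paper's proof: the Palais--Smale property (Proposition \ref{prop:homg-forest-are-P-S}) gives the strict inequality $\#\mathcal{G}_1^{\partial\sigma_i(L)}<\#\mathcal{G}_1^{\partial L}$, Lemma \ref{lem:reflectionEdge} identifies $L\to\sigma_i(L)$ as an edge with $\ell(S_i)>\ell(S_{i+1})$, and comparing out-degrees through per-summand contributions is exactly the paper's comparison of the height functions $h$ from the proof of Proposition \ref{prop:homg-forest-are-P-S}. The gap lies in the two combinatorial claims that carry the actual computation.

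First, the claim that between adjacent isomorphic summands there is ``exactly one admissible movable part'' is false: the moved window does not have to land at the same depths as the removed one (``aligning the terminal vertices''); it only has to cover the same vertices of $\Delta_n$ with the same multiplicities, which is a condition modulo $n$. Any $m\equiv\ell(S_i)-\ell(S_{i+1})\pmod{n}$ in the admissible range yields a mutation, so $\nu_P(i,i+1)$ can be arbitrarily large. Second, the decomposition $\#\mathcal{G}_1^{\partial P}=\sum_{a<b}\nu_P(a,b)$ presumes that fundamental mutations always go from a lower-indexed summand into a higher-indexed one, which is also false. Both failures occur already on the loop quiver: take $M=A_2\oplus A_2$ (homogeneous) and $X=\mr{Gr}_2(M)$. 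The fixed point $L$ given by the whole first summand has \emph{two} fundamental mutations into $U_2$ (move the top vertex of $S_1$, or move all of $S_1$), so $\#\mathcal{G}_1^{\partial L}=2$, while $\sigma_1(L)$ has exactly one outgoing edge, and that edge moves a part from $U_2$ into $U_1$; your accounting would give out-degrees $1$ and $0$ for these two fixed points, whereas the true values are $2$ and $1$. The difference is still $1$, and indeed the corrected version of your computation works: for $L$ the admissible moves between the two segments that are fundamental are those with $m\equiv D\pmod{n}$ and $1\le m\le D$, where $D:=\ell(S_i)-\ell(S_{i+1})$, while for $\sigma_i(L)$ they are those with $m\equiv D\pmod{n}$ and $1\le m<D$; the single extra move, $m=D$, is precisely the mutation $L\to\sigma_i(L)$ itself, whose orientation reverses after the swap. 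But establishing this requires tracking orientations through the depth shift of the moved part and the relative index position of $U_i$ and $U_{i+1}$, and the same issue affects your cancellation step for pairs involving a third summand $U_c$: one needs that no vertex of $U_c$ separates corresponding vertices of $U_i$ and $U_{i+1}$ in the index order (true in the homogeneous case, but requiring proof). This is exactly the ``precise bookkeeping'' you set aside, and as written the count $1-0=1$ is not a correct account of either side of the claimed equality.
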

\begin{proof}
Since $M$ is homogeneous and $\sigma_i (L)\prec L$ we know that $\#\mathcal{G}_1^{\partial \sigma_{i}(L)} \leq\#\mathcal{G}_1^{\partial L}-1$. Equality follows with the same arguments as in the proof of Proposition~\ref{prop:homg-forest-are-P-S}, since the exchanged segments are directly next to each other because $\sigma_i$ is simple. Hence we can compute the difference between the height functions $h$ for $L$ and $\sigma_i (L)$ explicitly.
\end{proof}

\begin{lem}\label{lem:EdgesAdjacentSimpleRefln}Let $M \in \mr{rep}_\C(\Delta_n)$ be homogeneous and nilpotent, let $X=\mr{Gr}_\mb{e}(M)$ and  $\mathcal{G}=\mathcal{G}(X,T,\chi)$. 
Let $L\in X^T$ and $h<l$ with $h,l\in[k_{j-1}'+1,k_j'-1]$ for some $j\in[d_{0}]$ be such that $\#\mathcal{G}_1^{\partial \sigma_{h,l}(L)}=\#\mathcal{G}_1^{\partial L}+1$. Then
\begin{align*}
\{\alpha\mid \sigma_{h,l}(L)\stackrel{\alpha}{\to} L'\in\mathcal{G}_1^{\partial \sigma_{h,l}(L)}\}&=
\{\epsilon_l-\epsilon_{h}\}\cup \{\sigma_{h,l}(\beta)\mid L\stackrel{\beta}{\to} L''\in\mathcal{G}_1^{\partial L}\}\\
&\equiv \{\epsilon_l-\epsilon_{h}\}\cup \{\beta\mid L\stackrel{\beta}{\to} L''\in\mathcal{G}_1^{\partial L}\}\mod \epsilon_l-\epsilon_{h}
\end{align*}
Moreover, if $i\in [k_{j'-1}'+1,k_{j'}-1]$ for some $j'\in[d_0]$ is such that $\sigma_{i}\neq \sigma_{h,l}$, and $\#\mathcal{G}_1^{\partial \sigma_{i}(L)}=\#\mathcal{G}_1^{\partial L}+1$, then $\#\mathcal{G}_1^{\partial \sigma_{i}\sigma_{h,l}(L)}>\#\mathcal{G}_1^{\partial \sigma_{h,l}(L)}$
\end{lem}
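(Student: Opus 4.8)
The statement has two parts, and the first — computing the edge set $\mathcal{G}_1^{\partial\sigma_{h,l}(L)}$ — is where I expect the real content to lie; the second ``moreover'' clause should follow by combining the first part with Lemma~\ref{lem:LengthSimpleRefln}. The plan is to exploit the graph automorphism property \eqref{eqn:SkActionGraph} together with the combinatorial description of the moment graph in Theorem~\ref{trm:comb-moment-graph}. First I would observe that, since $\#\mathcal{G}_1^{\partial\sigma_{h,l}(L)}=\#\mathcal{G}_1^{\partial L}+1$, we have $\sigma_{h,l}(L)\succ L$, so by Lemma~\ref{lem:reflectionEdge} applied to $\sigma_{h,l}(L)$ in place of $L$ (whose image under the transposition is $L$ again, and whose number of outgoing edges drops), the pair $\{\sigma_{h,l}(L),L\}$ is a fundamental mutation oriented $\sigma_{h,l}(L)\to L$, labelled by $\epsilon_l-\epsilon_h$. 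This produces the edge $\epsilon_l-\epsilon_h$ in the target set and accounts for the $+1$ in the edge count.

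Next I would account for the remaining outgoing edges of $\sigma_{h,l}(L)$. The key is \eqref{eqn:SkActionGraph}: applying $\sigma_{h,l}$ to an edge $L\stackrel{\beta}{\to}L''$ yields an edge $\sigma_{h,l}(L)\stackrel{\sigma_{h,l}(\beta)}{-\!\!-\!\!-}\sigma_{h,l}(L'')$ in the \emph{unoriented} graph. I would argue that each such edge is in fact oriented \emph{outward} from $\sigma_{h,l}(L)$: since $L''\prec L$ and $\sigma_{h,l}$ preserves the Palais--Smale height count away from the one special mutation, the orientation is preserved except possibly for the single edge realizing the $\{\sigma_{h,l}(L),L\}$ mutation, which we have already isolated. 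This gives the inclusion of $\{\sigma_{h,l}(\beta)\}$ into the target set, and a counting argument (both sides now have $\#\mathcal{G}_1^{\partial L}+1$ elements, one coming from $\epsilon_l-\epsilon_h$ and the rest from the bijective image of $\mathcal{G}_1^{\partial L}$) forces equality. The congruence $\sigma_{h,l}(\beta)\equiv\beta\pmod{\epsilon_l-\epsilon_h}$ is a direct computation from \eqref{eqn:Characters-in-moment-graph}: the transposition $\sigma_{h,l}$ acts on a character $\epsilon_{j_t}-\epsilon_{j_s}+(k'-k)\delta$ only by possibly swapping indices among $\{h,l\}$, and any such swap changes the character by an integer multiple of $\epsilon_l-\epsilon_h$.

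For the ``moreover'' clause, I would use that $i$ is a \emph{simple} reflection disjoint from (and not equal to) $\sigma_{h,l}$ with $\sigma_i(L)\succ L$. The hypothesis $\#\mathcal{G}_1^{\partial\sigma_i(L)}=\#\mathcal{G}_1^{\partial L}+1$ means $\sigma_i$ raises the height at $L$; I want to show $\sigma_i\sigma_{h,l}(L)$ has strictly more outgoing edges than $\sigma_{h,l}(L)$. The natural route is to apply the first part of the lemma again with roles adjusted: by \eqref{eqn:SkActionGraph} the $\sigma_i$-edge at $L$ transports to a $\sigma_i$-compatible edge at $\sigma_{h,l}(L)$ whose orientation I must track, using that $\sigma_i$ and $\sigma_{h,l}$ move \emph{different} (indeed adjacent but distinct) segments of the coefficient quiver. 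The main obstacle I anticipate is precisely this disjointness-of-supports bookkeeping: one has to verify that applying $\sigma_i$ after $\sigma_{h,l}$ does not ``undo'' the height increase, i.e. that the height contributions of the two moved strings interact additively rather than cancelling. I would resolve this by returning to the height function $h$ on segments introduced in the proof of Proposition~\ref{prop:homg-forest-are-P-S} and checking, segment by segment using homogeneity (property (H2)), that the two reflections act on disjoint collections of segment-endpoints, so their effects on $\sum_j h(S_j)$ are independent and both strictly positive.
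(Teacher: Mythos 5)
Your route through the first equality is \emph{not} the paper's, and the difference is substantive. The paper pairs each outgoing edge $L\stackrel{\beta}{\to}L''$ with an outgoing edge of $\sigma_{h,l}(L)$ ending at the \emph{same} vertex $L''$, asserting that the composite $\mu_\beta\circ\mu_\gamma$ of two mutations is again a single mutation; you instead transport edges through the automorphism \eqref{eqn:SkActionGraph}, so your candidate edge ends at $\sigma_{h,l}(L'')$. Your pairing is the tenable one: already in $\mathcal{F}l_3=\mr{Gr}_{(1,2)}(U_1(2)\otimes\C^3)$ (Example~\ref{ex:flag-variety}), the fixed point $L=\{b_{2,0},b_{2,1},b_{3,1}\}$ with $h=1$, $l=2$ satisfies the hypothesis ($\#\mathcal{G}_1^{\partial \sigma_{1,2}(L)}=2=\#\mathcal{G}_1^{\partial L}+1$), but its unique outgoing edge ends at $L''=\{b_{3,0},b_{2,1},b_{3,1}\}$, which is \emph{not} adjacent to $\sigma_{1,2}(L)=\{b_{1,0},b_{1,1},b_{3,1}\}$ in the moment graph; composites of mutations are not mutations, so the paper's same-target pairing does not exist, while your transported edge $\sigma_{1,2}(L)\to\sigma_{1,2}(L'')$ does. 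In this respect your argument avoids a genuine defect of the paper's own proof.

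However, the crux of your argument --- that every transported edge is oriented \emph{out of} $\sigma_{h,l}(L)$ --- is exactly where the mathematical content sits, and your justification (``$\sigma_{h,l}$ preserves the Palais--Smale height count away from the one special mutation'') is circular: the outward orientation of $\sigma_{h,l}(L)\to\sigma_{h,l}(L'')$ is governed by $\#\mathcal{G}_1^{\partial\sigma_{h,l}(L'')}$, about which neither the hypothesis nor the Palais--Smale property tells you anything a priori, and your counting argument can only start once this is settled. Worse, the claim fails in the stated generality (the hypothesis allows $l>h+1$): orientations of edges whose label has trivial $\delta$-part are decided by the order of the two segment indices, and $\sigma_{h,l}$ inverts the relative order of $h$ (resp.\ $l$) against any segment $c$ with $h<c<l$. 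Concretely, for the loop quiver take $M=U_1(3)\otimes\C^4$, $X=\mr{Gr}_3(M)$, $L$ the fixed point meeting the four segments in successor-closed subsegments of lengths $(1,0,2,0)$, and $h=1$, $l=3$: one checks $\#\mathcal{G}_1^{\partial L}=5$ and $\#\mathcal{G}_1^{\partial\sigma_{1,3}(L)}=6$, so the hypothesis holds, yet the outgoing edge $L\to\sigma_{1,2}(L)$ (label $\epsilon_2-\epsilon_1$) transports to the edge joining $\sigma_{1,3}(L)$ and $\sigma_{1,3}\sigma_{1,2}(L)$, which is oriented \emph{into} $\sigma_{1,3}(L)$; accordingly the left-hand set contains $\epsilon_2-\epsilon_1$ while the right-hand set contains $\epsilon_2-\epsilon_3$ instead, and the asserted equality fails (only the congruence modulo $\epsilon_3-\epsilon_1$ survives). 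Your plan, and the equality itself, can only be carried out when $l=h+1$ --- which is the only case invoked later in the paper, via simple reflections --- and there it does close, because $\sigma_{h,h+1}$ preserves the order of every pair of segment indices except $\{h,h+1\}$, so among transported edges only the special one reverses.

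There is a second gap in the ``moreover'' part: you assume $\sigma_i$ and $\sigma_{h,l}$ move \emph{disjoint} collections of segments, but only $\sigma_i\neq\sigma_{h,l}$ is assumed, so $\{i,i+1\}$ and $\{h,l\}$ may share an index (e.g.\ $i=h$), and that overlap is precisely the delicate case. There the ``independent additive contributions'' principle breaks down; instead one must extract from $\#\mathcal{G}_1^{\partial\sigma_{h,l}(L)}=\#\mathcal{G}_1^{\partial L}+1$ that every segment $c$ strictly between $h$ and $l$ meets $L$ in a subsegment of length outside $[\ell(S_h),\ell(S_l)]$, and combine this with $\#\mathcal{G}_1^{\partial\sigma_i(L)}=\#\mathcal{G}_1^{\partial L}+1$ to conclude that the edge joining $\sigma_{h,l}(L)$ and $\sigma_i\sigma_{h,l}(L)$ is oriented towards $\sigma_{h,l}(L)$; the Palais--Smale property (Proposition~\ref{prop:homg-forest-are-P-S}) then yields the claim. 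The paper's one-line argument here (``both $\sigma_i$ and $\sigma_{h,l}$ raise some segment'') glosses over the same overlapping case, but your disjointness assumption is strictly stronger than what the hypotheses grant you.
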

\begin{proof}
Let $\mu_\gamma: \sigma_{h,l}(L)\stackrel{\gamma}{\to} L$ be the mutation associated to $\sigma_{h,l}$. Then by assumption about $L$ and $\sigma_{h,l}$, for every $\mu_\beta: L\stackrel{\beta}{\to} L''$ there exists a $\mu_{\tilde{\beta}}: \sigma_{h,l}(L)\stackrel{\tilde{\beta}}{\to} L''$, since $\mu_\beta \circ \mu_\gamma(\sigma_{h,l}(L)) = L''$ and $\sigma_{h,l}$ simply exchanges the role of the segments indexed by $h$ and $l$. This also implies that $\tilde{\beta}$ = $\sigma_{h,l}(\beta)$ and we obtain the first equality since $\#\mathcal{G}_1^{\partial \sigma_{h,l}(L)}=\#\mathcal{G}_1^{\partial L}+1$. The second identity is immediate since $\sigma_{h,l}(\beta) \equiv \beta \mod \epsilon_l-\epsilon_{h}$.

By the choice of $\sigma_i$ it follows that there is a mutation $\mu_\alpha : \sigma_i(L) \stackrel{\alpha}{\to} L$. Accordingly $\sigma_i \circ \sigma_{h,l}(L)$ and $\sigma_{h,l}(L)$ are also in mutation relation. Since both $\sigma_i$ and $\sigma_{h,l}$ raise some segment in the coefficient quiver, it follows that it is oriented as $\mu_{\tilde{\alpha}} : \sigma_i\sigma_{h,l}(L) \stackrel{\alpha}{\to} \sigma_{h,l}(L)$. This implies the claim since $\mathcal{G}$ is Palais-Smale oriented as shown in Proposition~\ref{prop:homg-forest-are-P-S}.
\end{proof}

\begin{rem}For $\mr{Gr}_\mb{e}(M)\simeq \mathcal{F}l_{n+1}$, the above lemma is \cite[Lemma 2.5]{Tymoczko2008}.
\end{rem}

\begin{lem}\label{lem:EdgesAdjacent}Let $M \in \mr{rep}_\C(\Delta_n)$ be homogeneous and nilpotent, let $X=\mr{Gr}_\mb{e}(M)$ and  $\mathcal{G}=\mathcal{G}(X,T,\chi)$. Let $\check{E}: L\stackrel{\alpha_{\check{E}}}{\to} L'\in\mathcal{G}_1$ such that $\#\mathcal{G}^{\partial L'}_1=\# \mathcal{G}^{\partial L}_1-1$. Then
\[\{\alpha \mid L\stackrel{\alpha}{\to} \hat{L} \in\mathcal{G}_1^{\partial L}\}\equiv \{ \alpha_{\check{E}} \} \cup \{\beta\mid L'\stackrel{\beta}{\to} \tilde{L} \in\mathcal{G}_1^{\partial L'}\}\mod \alpha_{\check{E}}.\]
\end{lem}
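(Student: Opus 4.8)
The plan is to argue entirely inside the combinatorial moment graph, using Theorem~\ref{trm:comb-moment-graph} to identify the out-edges at a vertex with the fundamental mutations starting at the corresponding fixed point, and their labels with the characters \eqref{eqn:Characters-in-moment-graph}. First I would write the distinguished edge $\check{E}$ as a fundamental mutation $\check{\mu}\colon L\to L'$ moving a connected predecessor closed movable part $P$ from a segment $S_{j_s}$ (living on $U_{j_s}$) to a segment $S_{j_t}$ (living on $U_{j_t}$), with $j_s<j_t$ as in Remark~\ref{rem:T-weights-one-dim-orbits}, so that $\alpha_{\check{E}}=\epsilon_{j_t}-\epsilon_{j_s}+c\,\delta$ with $c=k'-k$ the terminal-vertex index difference. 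Since $\check{\mu}$ is itself one of the mutations counted by $\mathcal{G}_1^{\partial L}$, its label accounts for the summand $\{\alpha_{\check{E}}\}$ on the right-hand side (trivially $\alpha_{\check{E}}\equiv\alpha_{\check{E}}$). As $\#\mathcal{G}_1^{\partial L'}=\#\mathcal{G}_1^{\partial L}-1$, it then suffices to exhibit a bijection of multisets between the remaining out-edges at $L$ and the out-edges at $L'$ which preserves labels modulo $\alpha_{\check{E}}$.

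I would build this correspondence by transporting mutations along $\check{\mu}$, mimicking the composition argument in the proof of Lemma~\ref{lem:EdgesAdjacentSimpleRefln}. Since $L$ and $L'$ agree outside $S_{j_s},S_{j_t}$, any fundamental mutation $\mu_\beta\colon L\to L''$ whose movable part, source and target all avoid $\{j_s,j_t\}$ commutes with $\check{\mu}$: applying the same move to $L'$ produces a fundamental mutation $L'\to\check{\mu}(L'')$ with the identical label $\beta$. That the transported relation is genuinely oriented \emph{out} of $L'$ follows from the Palais--Smale property established in Proposition~\ref{prop:homg-forest-are-P-S}, and invertibility of the construction (transporting back from $L'$) makes it a bijection on this subset. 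These pairs match labels on the nose, hence a fortiori modulo $\alpha_{\check{E}}$.

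The heart of the argument, and the step I expect to be the main obstacle, is the interaction case, in which the source, target, or movable part of $\mu_\beta$ meets $S_{j_s}$ or $S_{j_t}$. Here naive transport fails, because in $L'$ the part $P$ no longer sits on $S_{j_s}$ but on $S_{j_t}$, so the corresponding move out of $L'$ is forced to use the index $j_t$ where $\mu_\beta$ used $j_s$ (or conversely). Tracking the terminal vertices through \eqref{eqn:Characters-in-moment-graph}, such a substitution replaces a summand $\epsilon_{\,\cdot\,}-\epsilon_{j_s}$ by $\epsilon_{\,\cdot\,}-\epsilon_{j_t}$ and simultaneously shifts the $\delta$-coefficient by the length of $P$, so that $\beta$ and its transported label $\widetilde{\beta}$ differ by an integer multiple of $\alpha_{\check{E}}=\epsilon_{j_t}-\epsilon_{j_s}+c\,\delta$. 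The key point to nail down is that the gapless condition (H2) of Definition~\ref{def:homog-forest} makes this substitution well defined and label compatible in exactly this way: homogeneity forces every movable configuration available over $S_{j_s}$ in $L$ to have a unique counterpart over $S_{j_t}$ in $L'$, with the position indices $k,k'$ shifting uniformly. Assembling the non-interacting and interacting correspondences, and verifying they are jointly bijective by the length count of Proposition~\ref{prop:homg-forest-are-P-S}, yields the asserted congruence of multisets modulo $\alpha_{\check{E}}$.
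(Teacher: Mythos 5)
Your proposal has the same architecture as the paper's proof: identify out-edges with fundamental mutations via Theorem~\ref{trm:comb-moment-graph}, split them according to whether they interact with the two segments moved by $\check{E}$, transport the non-interacting ones with identical labels (the paper does this by the commuting square $\mu_{\check{E}'}\circ\mu_{\hat{E}}=\mu_E\circ\mu_{\check{E}}$, transporting from $L'$ to $L$ rather than from $L$ to $L'$ as you do), match the interacting ones up to multiples of $\alpha_{\check{E}}$, and close with the count $\#\mathcal{G}_1^{\partial L'}=\#\mathcal{G}_1^{\partial L}-1$. The one place where you genuinely diverge is the interacting case, and there your version is arguably the more robust formulation. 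The paper pairs an interacting mutation $\mu_E\colon L'\to\tilde{L}$ with the composite $\mu_E\circ\mu_{\check{E}}\colon L\to\tilde{L}$, asserting that this composite is again a single mutation with the \emph{same} target $\tilde{L}$; read literally, this works only when the part moved by $\mu_E$ is exactly the relocated part $P$ --- if $\mu_E$ moves a proper predecessor-closed sub-part of $P$, or $P$ together with an initial piece of the old segment on $U_{j_t}$, then $L$ and $\tilde{L}$ differ in three segments and are not mutation related. Your substitution correspondence (swap the roles of $j_s$ and $j_t$ while keeping the shape of the moved part, allowing the target fixed point to change) is precisely what covers those configurations, and your mechanism --- the uniform index shift of all vertices of the relocated part --- is the right reason for the congruence. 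One slip to correct: the $\delta$-coefficient of the transported label shifts by the terminal-vertex index difference $c=k'-k$ of $\check{\mu}$ (your own $c$), not by the length of $P$; with ``length of $P$'' the difference would in general fail to be a multiple of $\alpha_{\check{E}}=\epsilon_{j_t}-\epsilon_{j_s}+c\,\delta$. With that correction, and granting the orientation statements you delegate to the Palais--Smale property of Proposition~\ref{prop:homg-forest-are-P-S}, your sketch proves the lemma.
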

\begin{proof}
By definition of mutations, there are exactly two segments involved in the mutation $\mu_{\check{E}}$. Hence for each mutation $\mu_E : L' \to \tilde{L}$, which does not involve these segments, there exists a mutation $\mu_{\hat{E}}: L \to \hat{L}$ and $\mu_{\check{E}'}: \hat{L} \to \tilde{L}$ such that $\mu_{\check{E}'} \circ \mu_{\hat{E}} = \mu_{E} \circ \mu_{\check{E}}$. By construction of $\mu_{\hat{E}}$ and the $T$-action as in Lemma~\ref{lma:T-action-extends-to-quiver-Grass}, we obtain $\beta_E = \alpha_{\hat{E}}$.

If a mutation $\mu_E : L' \to \tilde{L}$ involves one of the segments of $\mu_{\check{E}}$, there exists a mutation $\mu_{\overline{E}} : L \to \tilde{L}$ such that $\mu_{\overline{E}} = \mu_E \circ \mu_{\check{E}}$. By construction of the $T$-action this implies that $\beta_E \equiv \alpha_{\overline{E}} \mod \alpha_{\check{E}}$. Now the claim follows since $\#\mathcal{G}^{\partial L'}_1=\# \mathcal{G}^{\partial L}_1-1$.
\end{proof}
\subsection{Geometric Action on Cohomology}\label{subsec:Geo-action-cohomology}
In the proof of the following result we will exploit the Borel construction of equivariant cohomology, which we recall in the case of an algebraic torus $T$ (see \cite[\S1]{Brion2000} for more details). Let us consider the total space $E_T=(\mathbb{C}^\infty\setminus\{0\})^{d+1}$, equipped with the $T$-action
\[(z_1, \ldots, z_d,z_{d+1})t= (z_1t_1, \ldots, z_d t_d,z_{d+1}t_{d+1})\]
for $(z_1, \ldots z_d, z_{d+1})\in E_T$ and $t=(t_1, \ldots, t_d,t_{d+1})\in T$.

Hence, $H_T^\bullet(X)=H^\bullet(E_T\times_T X)$, where $E_T\times_T X$ is the quotient of the product $E_T\times X$ by the equivalence relation $\sim$ given by:
\[
(e,x)\sim (e',x')\quad\Leftrightarrow\quad \exists t\in T\ \colon\  e'=et\hbox{ and }x'=t^{-1}x.
\]
Recall that we denote $H_T^\bullet(\mr{pt})$ by $R$ as in Remark~\ref{rem:torus-characters-part-ii}.
\begin{prop}\label{propn:GeometricPermutation-Action}Let $M \in \mr{rep}_\C(\Delta_n)$ be nilpotent. The $\mathfrak{S}_{\underline{k}}$-action on $X=\mr{Gr}_\mb{e}(M)$ induces an $\mathfrak{S}_{\underline{k}}$-action on $H_T^\bullet(X)$ which under localisation is given by
\[
\sigma\cdot(f_L)_{L\in X^T}=(\sigma(f_{\sigma^{-1}(L)}))_{L\in X^T}, \quad \sigma\in\mathfrak{S}_{\underline{k}},
\]
where the $\mathfrak{S}_{\underline{k}}$-action on $R$ is the one induced by its (linear) action on $\mathfrak{X}^*(T)$.
\end{prop}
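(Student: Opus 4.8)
The plan is to realise the action via the Borel construction, following Brion \cite{Brion2000} and Tymoczko \cite{Tymoczko2008}, and exploiting that $\mathfrak{S}_{\underline{k}}$ normalises $T$ rather than centralising it. The starting point is Lemma~\ref{lem:geomSkAction}, which gives $\sigma(t.N)=t^\sigma.\sigma(N)$ for all $\sigma\in\mathfrak{S}_{\underline{k}}$, $t\in T$ and $N\in X$. First I would lift $\sigma$ to $E_T=(\C^\infty\setminus\{0\})^{d+1}$ by permuting the $d$ coordinates attached to $\gamma_1,\dots,\gamma_d$ according to $\sigma$ and fixing the coordinate attached to $\gamma_0$; explicitly, writing $e=(z_0,z_1,\dots,z_d)$ one sets $\sigma(e)=(z_0,z_{\sigma(1)},\dots,z_{\sigma(d)})$. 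A one-line coordinate check then gives the compatibility
\[\sigma(e.t)=\sigma(e).t^\sigma\qquad(t\in T),\]
the $E_T$-analogue of the normalising relation of Lemma~\ref{lem:geomSkAction}.

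Next I would run the descent. The diagonal map $\sigma(e,x)=(\sigma(e),\sigma(x))$ on $E_T\times X$ is compatible with the equivalence relation $\sim$ defining $E_T\times_T X$: combining the two relations,
\[\sigma(e.t,\,t^{-1}.x)=\big(\sigma(e).t^\sigma,\,(t^\sigma)^{-1}.\sigma(x)\big)\sim(\sigma(e),\sigma(x)),\]
so $\sigma$ descends to a self-map of $E_T\times_T X$. Since $H_T^\bullet(X)=H^\bullet(E_T\times_T X)$, I define the action on cohomology by $\sigma\cdot\omega:=(\sigma^{-1})^{*}\omega$; pulling back along the inverse turns the (left) geometric action into a genuine left action of $\mathfrak{S}_{\underline{k}}$ on $H_T^\bullet(X)$ by graded ring automorphisms, and naturality of the Borel construction makes it independent of the contractible model $E_T$.

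It remains to read off the formula under localisation. For each $T$-fixed point $L$ (recall $\sigma$ permutes $X^T$) the inclusion $\iota_L\colon BT=E_T\times_T\{L\}\hookrightarrow E_T\times_T X$ satisfies $\sigma\circ\iota_L=\iota_{\sigma(L)}\circ\bar\sigma$, where $\bar\sigma\colon BT\to BT$ is induced by the permutation of $E_T$ (that $\bar\sigma$ is well defined on $BT=E_T/T$ uses exactly $\sigma(e.t)=\sigma(e).t^\sigma$). Writing $(f_L)_L$ for the image of $\omega$ under localisation, functoriality and the relation for $\sigma^{-1}$ give
\[(\sigma\cdot\omega)|_L=\iota_L^{*}(\sigma^{-1})^{*}\omega=(\overline{\sigma^{-1}})^{*}\,\iota_{\sigma^{-1}(L)}^{*}\omega=(\overline{\sigma^{-1}})^{*}\big(f_{\sigma^{-1}(L)}\big).\]
Finally I identify $(\overline{\sigma^{-1}})^{*}$ on $H^\bullet(BT)=R$ with the character action: since $\bar\sigma$ permutes the $\mathbb{P}^\infty$-factors of $BT$ via $\sigma$, its pullback sends the generator $\tau_j$ (the class of the $j$-th coordinate character) to $\tau_{\sigma(j)}$, whence $(\overline{\sigma^{-1}})^{*}\tau_j=\tau_{\sigma^{-1}(j)}=\sigma(\tau_j)$, matching $\sigma(\alpha)=\alpha\circ\sigma^{-1}$. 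Substituting yields $(\sigma\cdot\omega)|_L=\sigma(f_{\sigma^{-1}(L)})$, as claimed.

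The conceptual skeleton is standard, so the real work is bookkeeping of conventions. The main obstacle I expect is keeping the three permutation directions consistent — the coordinate permutation on $E_T$, the twist $t\mapsto t^\sigma$ on $T$, and the induced action $\alpha\mapsto\alpha\circ\sigma^{-1}$ on $\mathfrak{X}^*(T)$ — and choosing the pullback along $\sigma^{-1}$ (not $\sigma$) so that the outcome is a left action reproducing exactly $(\sigma(f_{\sigma^{-1}(L)}))_L$ rather than its inverse-twisted variant. One should also confirm that the target formula $(g_L)_L\mapsto(\sigma(g_{\sigma^{-1}(L)}))_L$ preserves the GKM relations of Theorem~\ref{thm:GKM}, so that the $\mathfrak{S}_{\underline{k}}$-action restricts to the image of localisation; this is immediate from the edge-label transformation \eqref{eqn:SkActionGraph}.
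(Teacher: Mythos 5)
Your proposal follows essentially the same route as the paper's proof: lift the permutation action to $E_T$ by permuting the $d$ coordinates, check the twisted equivariance $\sigma(e.t)=\sigma(e).t^\sigma$ coming from Lemma~\ref{lem:geomSkAction}, descend to the Borel construction $E_T\times_T X$, and identify the induced pullback under localisation with $(f_L)_L\mapsto(\sigma(f_{\sigma^{-1}(L)}))_L$ via the fixed-point inclusions and the coefficient twist on $H^\bullet(BT)=R$. The only differences are cosmetic bookkeeping: the paper puts the inverse on the $X$-factor upstairs (acting by $(e,x)\mapsto(e\sigma,\sigma^{-1}x)$ and pulling back along $\sigma$ itself, rather than along $\sigma^{-1}$ as you do) and verifies first that the claimed formula preserves the GKM edge relations via \eqref{eqn:SkActionGraph}, a step you correctly relegate to a closing consistency remark since your geometric derivation makes it automatic.
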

\begin{proof}
Let $(f_L)_{L\in X^T}\in H_T^\bullet(X)$, and consider the tuple $(f'_L)_{L\in\ X^T}\in\bigoplus_{L\in X^T}R$ with $f'_L:=\sigma(f_{\sigma^{-1}(L)})$. First of all, we have to verify that
\[L\stackrel{\alpha}{-\!\!\!-\!\!\!-\!\!-} L'\in \overline{\mathcal{G}(X,T)}_1 \quad \Rightarrow\quad f'_{L}-f'_{L'}\in \alpha R.
\]
Let $L\stackrel{\alpha}{-\!\!\!-\!\!\!-\!\!-} L'\in \overline{\mathcal{G}(X,T)}_1$, then by \eqref{eqn:SkActionGraph}, also $\sigma^{-1}(L)\stackrel{\sigma^{-1}(\alpha)}{-\!\!\!-\!\!\!-\!\!-} \sigma^{-1}(L')\in \overline{\mathcal{G}(X,T)}_1$. Since $(f_L)_{L\in X^T}\in H_T^\bullet(X)$, we know that 
\[
f_{\sigma^{-1}(L)}-f_{\sigma^{-1}(L')}\in\sigma^{-1}(\alpha)R.
\]
We deduce that
\[f'_L-f'_{L'}=\sigma(f_{\sigma^{-1}(L)})-\sigma(f_{\sigma^{-1}(L')})=\sigma(f_{\sigma^{-1}(L)}-f_{\sigma^{-1}(L')})\in\sigma(\sigma^{-1}(\alpha))R=\alpha R.\]

Now we want to show that the above algebraic action comes from the geometric action of $\mathfrak{S}_{\underline{k}}$ on $X$. We start by observing that the $\mathfrak{S}_{\underline{k}}$-action on $T$ induces an action on the total space $E_T$: for any 
$(z_1, \ldots, z_d, z_{d+1})\in(\mathbb{C}^\infty\setminus\{0\})^d\times(\mathbb{C}^\infty\setminus\{0\})^1$ and any $\sigma\in\mathfrak{S}_{\underline{k}}$
\[
(z_1, \ldots, z_d, z_{d+1})\sigma=(z_{\sigma(1)}, \ldots, z_{\sigma(d)}, z_{d+1}).
\]
There is hence an $\mathfrak{S}_{\underline{k}}$-action on $E_T\times X$ given by $\sigma\cdot(e,x):=(e \sigma ,\sigma^{-1}x )$, and such an action is constant along $\sim$-equivalence classes: let $e,e'\in E_T$, $x,x'\in X$ and $t\in T$ be such that $e'=et$, $x'=t^{-1}x$ (that is, $(e,x)\sim (e',x')$), then
\[
\sigma\cdot(e',x')=(et\sigma, \sigma^{-1}t^{-1}x)=(e\sigma t^{\sigma^{-1}},(t^{\sigma^{-1}})^{-1}\sigma^{-1}x)\sim (e\sigma, \sigma^{-1}x)=\sigma\cdot(e,x).
\]
We have in this way obtained an $\mathfrak{S}_{\underline{k}}$-action on $E_T\times_T X$, which restricts to an action on $E_T\times_T X^T$, and hence any $\sigma\in\mathfrak{S}_{\underline{k}}$, corresponds to an automorphism of $E_T\times_T X$, resp. $E_T\times_T X^T$, and hence gives us pullback maps 
\[
\sigma^*:H_T^\bullet(X)\rightarrow H_T^\bullet(X), \quad \sigma^*: H^\bullet_T(X^T) \rightarrow H_T^\bullet(X^T)
\]
and $\sigma^*((g_L)_{L\in X^T})=(g'_L)_{L\in X^T}$ with $g'_L=\sigma(g_{\sigma^{-1}(L)})$ for any 
\[(g_L)_{L\in X^T}\in H_T^\bullet(X^T)=\bigoplus_{L\in X^T}R.\]
If $\iota:X^T\hookrightarrow X$ is the inclusion map, then the localization theorem tells us that the following is an injective ring homomorphism: 
\[
\iota^*:H_T^\bullet(X)\hookrightarrow H_T^\bullet(X^T)
\]
To conclude it is sufficient to observe that $\sigma^*\circ \iota^*=\iota^*\circ\sigma^*$ holds by definition of the involved morphisms.
\end{proof}
\begin{ex}In the case of Example~\ref{ex:flag-variety}, this group action is symmetric and was studied for example in \cite{Tymoczko2008}.
\end{ex}
\begin{ex}Let us consider the graph $\mathcal{G}(X,T,\chi')$ from Example~\ref{ex:cohomology-generators-loop-quiver} and the automorphism $\sigma$ of the underlying unoriented graph from Example~\ref{ex:S2ActionLoopA2A1A1}. Consider moreover the KT-class $p_2$ from Example~\ref{ex:K-T-basis-loop-quiver}, then

\begin{center}
\begin{tikzpicture}[scale=0.6]
\node at (-2,0) {$\epsilon_3-\epsilon_1$};
\node at (2,0) {$\epsilon_2-\epsilon_1-\delta$};
\node at (0,-2) {$\epsilon_3-\epsilon_2$};
\node at (0,-4) {$0$};

\node at (-0.35,-0.85) {\tiny $\epsilon_2-\epsilon_1$}; 
\node at (0.6,-0.35) {\tiny $\epsilon_3-\epsilon_1-\delta$}; 
\node at (0.63,-2.46) {\tiny $\epsilon_3-\epsilon_2$}; 
\node at (3.3,-2) {{\tiny $\epsilon_2-\epsilon_1-\delta$}\quad =}; 
\node at (-3.2,-2) {$\sigma_{2,3}\cdot$ \  {\tiny $\epsilon_3-\epsilon_1$}}; 

\draw[arrows={-angle 90}, shorten >=7, shorten <=7]  (2,0) -- (0,-2); 
\draw[arrows={-angle 90}, shorten >=7, shorten <=7]  (-2,0) -- (0,-2); 
\draw[arrows={-angle 90}, shorten >=7, shorten <=7]  (0,-2) -- (0,-4); 

 \def\centerarc[#1](#2)(#3:#4:#5);%
    {
    \draw[#1]([shift=(#3:#5)]#2) arc (#3:#4:#5);
    }
\centerarc[arrows={-angle 90}](-2.7,0)(-4:-51:4.8cm); 
\centerarc[arrows={-angle 90}](2.7,0)(180+4:180+51:4.8cm); 
\end{tikzpicture}\ 
\begin{tikzpicture}[scale=0.6]
\node at (-2,0) {$\epsilon_2-\epsilon_1$};
\node at (2,0) {$\epsilon_3-\epsilon_1-\delta$};
\node at (0,-2) {$0$};
\node at (0,-4) {$\epsilon_2-\epsilon_3$};

\node at (-0.35,-0.85) {\tiny $\epsilon_2-\epsilon_1$}; 
\node at (0.6,-0.35) {\tiny $\epsilon_3-\epsilon_1-\delta$}; 
\node at (0.63,-2.46) {\tiny $\epsilon_3-\epsilon_2$}; 
\node at (2.9,-2) {{\tiny $\epsilon_2-\epsilon_1-\delta$}}; 
\node at (-2.6,-2) {{\tiny $\epsilon_3-\epsilon_1$}}; 

\draw[arrows={-angle 90}, shorten >=7, shorten <=7]  (2,0) -- (0,-2); 
\draw[arrows={-angle 90}, shorten >=7, shorten <=7]  (-2,0) -- (0,-2); 
\draw[arrows={-angle 90}, shorten >=7, shorten <=7]  (0,-2) -- (0,-4); 

 \def\centerarc[#1](#2)(#3:#4:#5);%
    {
    \draw[#1]([shift=(#3:#5)]#2) arc (#3:#4:#5);
    }
\centerarc[arrows={-angle 90}](-2.7,0)(-4:-51:4.8cm); 
\centerarc[arrows={-angle 90}](2.7,0)(180+4:180+51:4.8cm); 
\end{tikzpicture}
\end{center}
Observe that $\sigma_{2}(p_2)=p_2+(\epsilon_2-\epsilon_3)p_1$. Moreover, $\sigma_{2}$ acts trivially on the classes $p_1,\ p_3,\ p_4$. We will see that this is not a coincidence.
\end{ex}
\section{Permutation Action and KT-classes}\label{sec:perm-action-KT-classes}
In this section we study the behaviour of the KT-classes under the $\mathfrak{S}_{\underline{k}}$-action introduced in  the previous section. This allows us to prove that the equivariant cohomology is isomorphic to a direct sum of trivial representations (as graded twisted $R$-module). In \S~\ref{subsec:perm-action-cell-closures}, we generalise this to the action on certain cell closures inside the quiver Grassmannian.

\begin{rem}
If a moment graph $\mathcal{G}(X,T,\chi)$ is fixed, the relation $\succeq_\chi$ is uniquely determined, so that we will spare notation, by dropping the index, and write $\succeq$. Moreover, once $\mathcal{G}(X,T,\chi)$ is given, it is clear that any KT-class is intended w.r.t. $\succeq=\succeq_\chi$ and we will avoid to repeat it in any statement.
\end{rem}

\begin{lem}Let $M \in \mr{rep}_\C(\Delta_n)$ be homogeneous and nilpotent. Let $X=\mr{Gr}_\mb{e}(M)$ and  $\mathcal{G}=\mathcal{G}(X,T,\chi)$.  Let $\check{E}: L\stackrel{\alpha_{\check{E}}}{\to} L'\in\mathcal{G}_1$ such that $\#\mathcal{G}^{\partial L'}_1=\# \mathcal{G}^{\partial L}_1-1$. If $p^{L'}\in H^\bullet_T(X)$ is a KT-class, then
\begin{equation}\label{eqn:KTClassAdjacent}
p^{L'}_{L}=\prod_{E\in\mathcal{G}^{\partial L}_1\setminus\{L\to L'\}}\alpha_E.
\end{equation}
\end{lem}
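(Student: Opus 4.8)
The plan is to pin down the single localised entry $p^{L'}_L$ from the three defining properties of a KT-class together with the GKM relations of Theorem~\ref{thm:GKM}, reading off the right-hand side from the edge combinatorics of Lemma~\ref{lem:EdgesAdjacent}. Throughout I write $P:=\prod_{E\in\mathcal{G}^{\partial L}_1\setminus\{\check E\}}\alpha_E$ for the target expression and set $q:=\#\mathcal{G}^{\partial L'}_1=\#\mathcal{G}^{\partial L}_1-1$. Since $M$ is homogeneous, $\mathcal{G}$ is Palais--Smale by Proposition~\ref{prop:homg-forest-are-P-S}; in particular $\succeq$ is a partial order and the integer $\#\mathcal{G}^{\partial(-)}_1$ strictly decreases along every directed edge. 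By (KT1)--(KT2) the entry $p^{L'}_L$ is homogeneous of the same degree as $p^{L'}_{L'}=\prod_{E\in\mathcal{G}^{\partial L'}_1}\alpha_E$, namely of degree $2q$; and $P$ is a product of exactly $q$ linear forms, hence also homogeneous of degree $2q$. It therefore suffices to show that $g:=p^{L'}_L-P$ vanishes.

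The key step is a monotonicity observation: for every edge $E\colon L\stackrel{\alpha_E}{\to}\hat L$ with $\hat L\neq L'$ one has $\hat L\not\succeq L'$. Indeed, if $\hat L\succ L'$ there would be a directed path of positive length from $\hat L$ to $L'$, forcing $\#\mathcal{G}^{\partial\hat L}_1>\#\mathcal{G}^{\partial L'}_1$; but the edge $L\to\hat L$ gives $\#\mathcal{G}^{\partial\hat L}_1<\#\mathcal{G}^{\partial L}_1=\#\mathcal{G}^{\partial L'}_1+1$, i.e. $\#\mathcal{G}^{\partial\hat L}_1\leq\#\mathcal{G}^{\partial L'}_1$, a contradiction. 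Consequently $p^{L'}_{\hat L}=0$ by (KT3), and the GKM relation $p^{L'}_L-p^{L'}_{\hat L}\in\alpha_E R$ attached to $E$ yields $\alpha_E\mid p^{L'}_L$, and hence $\alpha_E\mid g$, for every $E\in\mathcal{G}^{\partial L}_1\setminus\{\check E\}$.

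It remains to treat the label $\alpha_{\check E}$. Lemma~\ref{lem:EdgesAdjacent} identifies the multiset of labels leaving $L$ with $\{\alpha_{\check E}\}\cup\{\beta\mid L'\stackrel{\beta}{\to}\tilde L\}$ modulo $\alpha_{\check E}$; discarding $\alpha_{\check E}$ from both sides and multiplying gives $P\equiv\prod_{\beta}\beta=p^{L'}_{L'}\pmod{\alpha_{\check E}}$. Combined with the GKM relation $p^{L'}_L\equiv p^{L'}_{L'}\pmod{\alpha_{\check E}}$ along $\check E$, this shows $\alpha_{\check E}\mid g$ as well. Thus $g$ is divisible by $\alpha_E$ for each of the $q+1$ edges leaving $L$. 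Since the labels of distinct edges out of a fixed vertex are pairwise non-proportional -- as one reads off \eqref{eqn:Characters-in-moment-graph}, two fundamental mutations out of $L$ with the same pair of indecomposables have the same $\epsilon$-part but differ in their $\delta$-coefficient -- the product $\prod_{E\in\mathcal{G}^{\partial L}_1}\alpha_E$ of all $q+1$ labels divides $g$. This product has degree $2(q+1)>2q=\deg g$, so the homogeneous element $g$ must vanish, giving $p^{L'}_L=P$.

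The main obstacle is precisely this non-proportionality at $L$: without it the divisibilities established above only force $g$ to be divisible by the \emph{distinct} labels, which is too weak for the degree count, and one would be reduced to fixing a scalar from the relation $p^{L'}_L\equiv P\pmod{\alpha_{\check E}}$. Verifying non-proportionality requires unwinding the mutation combinatorics behind \eqref{eqn:Characters-in-moment-graph}: the terminal vertex of a moved string always lands in a slot determined by the target indecomposable, so enlarging the moved string alters only the $\delta$-coefficient of the label. It is here that the gaplessness built into homogeneity is genuinely used.
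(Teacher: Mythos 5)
Your proposal is correct and takes essentially the same route as the paper's proof: both arguments use the Palais--Smale property together with (KT3) and the GKM relations to show that every label $\alpha_E$, $E\in\mathcal{G}_1^{\partial L}\setminus\{\check E\}$, divides $p^{L'}_L$, and both invoke Lemma~\ref{lem:EdgesAdjacent} to compare $\prod_{E\in\mathcal{G}^{\partial L}_1\setminus\{\check E\}}\alpha_E$ with $p^{L'}_{L'}$ modulo $\alpha_{\check E}$; the only difference is cosmetic, in that the paper concludes $p^{L'}_L=z\prod_{E\neq\check E}\alpha_E$ by a degree count and then fixes $z=1$, while you kill the difference $g$ by divisibility through all $q+1$ pairwise non-proportional labels. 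One minor inaccuracy in your closing remark: the pairwise non-proportionality of the labels at $L$ (which the paper asserts without justification, and which your $\delta$-coefficient argument via \eqref{eqn:Characters-in-moment-graph} correctly establishes) holds for arbitrary nilpotent $M$ and does not use gaplessness; homogeneity is genuinely needed only for the Palais--Smale property (Proposition~\ref{prop:homg-forest-are-P-S}) and for Lemma~\ref{lem:EdgesAdjacent}.
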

\begin{proof}
By Lemma \ref{lem:EdgesAdjacent} we know that $F\in \mathcal{G}_1^{\partial L}$ if and only if there exists a (unique) $E:N\rightarrow L'\in\mathcal{G}_1^{\partial L'}\setminus \{\check{E}\}$ such that  $F=\mu_{\check{E}}(E)=(\mu_{\check{E}}N\rightarrow L)$. Moreover, by the PS-property,  $\#\mathcal{G}_1^{\partial\mu_{\check{E}}N}<\#\mathcal{G}_1^{\partial L}=\mathcal{G}_{\partial L'}-1$ and we deduce (again by the PS-property) that $\mu_{\check{E}}N\not\succ L'$. Thus, by (KT3), we have $p^{L'}_{\mu_{\check{E}}N}=0$ for any such $N$, and hence
\[
p^{L'}_L\equiv 0 \mod  \alpha_{\mu_{\check{E}}(E)}, \quad \hbox{ for all }E\in \mathcal{G}_1^{\partial L'}\setminus \{\check{E}\}.
\]
Since the characters $\alpha_{\mu_{\check{E}}(E)}$, with $E\in \mathcal{G}_1^{\partial L'}\setminus \{\check{E}\}$, are pairwise linearly independent, is must hold
\[
\prod_{E\in \mathcal{G}_1^{\partial L'}\setminus \{\check{E}\}}\alpha_{\mu_{\check{E}}(E)} \hbox{ divides }p^{L'}_L.
\]
Moreover, by (KT2), $p^{L'}_L$ is homogeneous and
\[
\textrm{deg}(p^{L'}_L)=\textrm{deg}\left(\prod_{E\in \mathcal{G}_1^{\partial L'}\setminus \{\check{E}\}}\alpha_{\mu_{\check{E}}(E)}\right)
\]
and hence there exists a constant $z\in \mathbb{C}$ such that 
\[
p^{L'}_L=z\left( \prod_{E\in \mathcal{G}_1^{\partial L'}\setminus \{\check{E}\}}\alpha_{\mu_{\check{E}}(E)} \right)
\]
The constant $z$ is uniquely determined by imposing the condition  $p^{L'}_{L}\equiv p^{L'}_{L'} \mod \alpha_{\check{E}}$. Indeed, by Lemma \ref{lem:EdgesAdjacent} we know that  
\[
\left(\prod_{E\in \mathcal{G}_1^{\partial L'}\setminus \{\check{E}\}}\alpha_{\mu_{\check{E}}(E)}\right)\equiv \left(\prod_{E\in \mathcal{G}_1^{\partial L'}\setminus \{\check{E}\}}\alpha_{E}\right) \mod \alpha_{\check{E}}
\]
and this implies that $z=1$.
\end{proof}
\begin{cor}Let $M \in \mr{rep}_\C(\Delta_n)$ be homogeneous and nilpotent. Let $X=\mr{Gr}_\mb{e}(M)$ and  $\mathcal{G}=\mathcal{G}(X,T,\chi)$. Let $L\in X^T$ and $i\in[d]\setminus\underline{k'}$ be such that $\sigma_i L\prec L$. Let $p^L,p^{\sigma_i L}$ be the corresponding KT-classes. Then,
\begin{equation}\label{eqn:corKT_simplerefln}
p^{\sigma_i L}_{\sigma_i L}=\sigma_i(p_L^{\sigma_i L})=\frac{\sigma_i(p^L_L)}{\epsilon_{i}-\epsilon_{i+1}}.
\end{equation}
\end{cor}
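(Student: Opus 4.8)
The plan is to split the three-term identity into two pieces: the equality of the last two terms comes from factoring the single edge label $\alpha_{\check E}$ out of the diagonal entry $p^L_L$, while the equality of the first two terms comes from a label-matching bijection between the edges leaving $L$ and those leaving $\sigma_i L$.

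First I would fix the geometry of the relevant edge $\check E\colon L\to\sigma_i L$. Since $i\in[d]\setminus\underline{k'}$ we have $\sigma_i\in\mathfrak{S}_{\underline{k}}$, and the hypothesis $\sigma_i L\prec L$ together with Lemma~\ref{lem:LengthSimpleRefln} gives $\#\mathcal{G}_1^{\partial\sigma_i L}=\#\mathcal{G}_1^{\partial L}-1$. Applying Lemma~\ref{lem:reflectionEdge} with $h=i$, $l=i+1$ shows that $\sigma_i L$ is a fundamental mutation of $L$ and that the edge $\check E$ carries the label $\alpha_{\check E}=\epsilon_{i+1}-\epsilon_i$. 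Because $\#\mathcal{G}_1^{\partial\sigma_i L}=\#\mathcal{G}_1^{\partial L}-1$, the lemma immediately preceding this corollary (formula~\eqref{eqn:KTClassAdjacent}) applies to $\check E$ and yields
\[
p^{\sigma_i L}_{L}=\prod_{E\in\mathcal{G}_1^{\partial L}\setminus\{\check E\}}\alpha_E.
\]

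For the identity $\sigma_i(p^{\sigma_i L}_L)=\sigma_i(p^L_L)/(\epsilon_i-\epsilon_{i+1})$ I would simply compare the display above with (KT1) for $p^L$: since $p^L_L=\prod_{E\in\mathcal{G}_1^{\partial L}}\alpha_E=\alpha_{\check E}\,p^{\sigma_i L}_L$, we get $p^{\sigma_i L}_L=p^L_L/(\epsilon_{i+1}-\epsilon_i)$. Applying $\sigma_i$ and using that $\sigma_i$ swaps $\epsilon_i$ and $\epsilon_{i+1}$, so that $\sigma_i(\epsilon_{i+1}-\epsilon_i)=\epsilon_i-\epsilon_{i+1}$, gives this equality. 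For the identity $p^{\sigma_i L}_{\sigma_i L}=\sigma_i(p^{\sigma_i L}_L)$ I would invoke Lemma~\ref{lem:EdgesAdjacentSimpleRefln} with $\sigma_i L$ in the role of $L$ (so that its $\sigma_i$-image is $L$ and $\#\mathcal{G}_1^{\partial L}=\#\mathcal{G}_1^{\partial\sigma_i L}+1$). This produces a bijection between $\mathcal{G}_1^{\partial L}\setminus\{\check E\}$ and $\mathcal{G}_1^{\partial\sigma_i L}$ under which labels correspond via $\sigma_i$, that is $\{\alpha_E\mid E\in\mathcal{G}_1^{\partial L}\setminus\{\check E\}\}=\{\sigma_i(\alpha_F)\mid F\in\mathcal{G}_1^{\partial\sigma_i L}\}$. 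Taking products and combining with (KT1) for $p^{\sigma_i L}$ gives
\[
p^{\sigma_i L}_L=\prod_{E\in\mathcal{G}_1^{\partial L}\setminus\{\check E\}}\alpha_E=\sigma_i\Big(\prod_{F\in\mathcal{G}_1^{\partial\sigma_i L}}\alpha_F\Big)=\sigma_i\big(p^{\sigma_i L}_{\sigma_i L}\big),
\]
and applying the involution $\sigma_i$ to both ends yields the claim.

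The main obstacle I anticipate is the bookkeeping in this last step: one must verify that Lemma~\ref{lem:EdgesAdjacentSimpleRefln} genuinely applies with $\sigma_i L$ in the role of $L$, that the block condition on the indices $i,i+1$ is satisfied, and — most delicately — that the edge correspondence is a multiplicity-preserving bijection matching labels exactly (not merely modulo $\alpha_{\check E}$), so that the two products agree on the nose rather than only up to the sign ambiguity of edge labels recorded in Remark~\ref{rem:torus-characters-part-i}. Everything else is a direct substitution using (KT1) and homogeneity of the entries from (KT2).
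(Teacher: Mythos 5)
Your proposal is correct and follows essentially the same route as the paper: Lemma~\ref{lem:LengthSimpleRefln} to get the degree drop, Lemma~\ref{lem:reflectionEdge} for the edge $L\to\sigma_i L$ with label $\epsilon_{i+1}-\epsilon_i$, equation~\eqref{eqn:KTClassAdjacent} with $L'=\sigma_i L$, and Lemma~\ref{lem:EdgesAdjacentSimpleRefln} (applied at the vertex $\sigma_i L$) for the exact label correspondence. Your worry about multiplicities is harmless, since edge labels at a fixed vertex are pairwise linearly independent (as used in the proof of \eqref{eqn:KTClassAdjacent}), so the set equality of labels forces the products to agree.
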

\begin{proof}
By Lemma~\ref{lem:LengthSimpleRefln} we can apply  Lemma~\ref{lem:EdgesAdjacentSimpleRefln} and obtain
\[
\left\{\alpha_E\mid E\in\mathcal{G}_1^{\partial \sigma_i L}\right\}=\Big\{\sigma_i(\alpha_F)\mid F\in\mathcal{G}_1^{\partial L}\setminus\{L\rightarrow\sigma_i L\}\Big\}.
\]
Then the claim follows immediately from the previous lemma with $L' = \sigma_i L$.
\end{proof}

The following is the key result to describe the $\mathfrak{S}_{\underline{k}}$-module structure on $H_T^\bullet(X)$.
\begin{prop}\label{prop:ActionSimpleReflnKTClass}
Let $M \in \mr{rep}_\C(\Delta_n)$ be homogeneous and nilpotent. Let $X=\mr{Gr}_\mb{e}(M)$ and  $\mathcal{G}=\mathcal{G}(X,T,\chi)$. Let $\{p^L\}_{L\in X^T}$ be the unique KT-basis. If $i\in[d]\setminus\underline{k'}$, then
\begin{equation}\label{eqn:sigma_i-action}
\sigma_i\cdot p^L=
\left\{
\begin{array}{ll}
p^L+(\epsilon_i-\epsilon_{i+1})p^{\sigma_i L}&\hbox{if }\sigma_i L\prec L,\\
p^L&\hbox{otherwise.}
\end{array}
\right.
\end{equation}
\end{prop}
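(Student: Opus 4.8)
The plan is to argue after localisation. By Proposition~\ref{propn:GeometricPermutation-Action} the action is $(\sigma_i\cdot p^L)_N=\sigma_i(p^L_{\sigma_iN})$ for all $N\in X^T$, where $\sigma_i$ is an involution acting on $R$ by exchanging $\epsilon_i$ and $\epsilon_{i+1}$; in particular $\sigma_i(\alpha)=-\alpha$ for $\alpha:=\epsilon_i-\epsilon_{i+1}$. I would first reduce the subcase $\sigma_iL\succ L$ to the subcase $\sigma_iL\prec L$ by a purely formal involution trick: writing $L'=\sigma_iL$ (so that $\sigma_iL'=L\prec L'$) and granting the claimed identity $\sigma_i\cdot p^{L'}=p^{L'}+\alpha\,p^{\sigma_iL'}$, one applies $\sigma_i$ to both sides and uses $\sigma_i^2=\mathrm{id}$ and $\sigma_i(\alpha)=-\alpha$ to obtain $\alpha(p^L-\sigma_i\cdot p^L)=0$; since $H_T^\bullet(X)$ is a torsion free $R$-module this forces $\sigma_i\cdot p^L=p^L$. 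It therefore remains to treat $\sigma_iL\preceq L$.

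Set $g:=\sigma_i\cdot p^L-p^L$. The first substantial step is to prove $g=\alpha\,h$ for a unique $h\in H_T^\bullet(X)$. Fix $N$. If $N\neq\sigma_iN$, then by Lemma~\ref{lem:reflectionEdge} the fixed points $N$ and $\sigma_iN$ are joined by an edge with label $\pm\alpha$, so Theorem~\ref{thm:GKM} gives $p^L_N\equiv p^L_{\sigma_iN}\bmod\alpha$; together with the elementary congruence $\sigma_i(f)\equiv f\bmod\alpha$ valid for every $f\in R$, this yields $g_N=\sigma_i(p^L_{\sigma_iN})-p^L_N\equiv 0\bmod\alpha$ (the case $N=\sigma_iN$ being immediate). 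Hence $h:=(g_N/\alpha)_N$ has polynomial entries, and it satisfies the GKM relations: for an edge whose label is coprime to $\alpha$ this is automatic, while for the edges labelled $\pm\alpha$, which by \eqref{eqn:Characters-in-moment-graph} and Lemma~\ref{lem:reflectionEdge} are exactly those of the form $\{N,\sigma_iN\}$, a short symmetrisation shows $g_N-g_{\sigma_iN}\in\alpha^2R$. Thus $h\in H_T^\bullet(X)$.

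I would then identify $h$. When $\sigma_iL\prec L$ I claim $h=p^{\sigma_iL}$, which by the uniqueness of KT-classes (Lemma~\ref{lma:K-T-classes-unique}) follows once $h$ is shown to be a KT-class for $\sigma_iL$ in the sense of Definition~\ref{def:K-T-class}. Axiom (KT2) is clear, $g$ being homogeneous of degree $\deg p^L$, so that $h$ is homogeneous of degree $\deg p^L-2=2\,\#\mathcal{G}_1^{\partial\sigma_iL}$ by Lemma~\ref{lem:LengthSimpleRefln}. For (KT1) I compute $h_{\sigma_iL}=(\sigma_i(p^L_L)-p^L_{\sigma_iL})/\alpha=\sigma_i(p^L_L)/\alpha$, the term $p^L_{\sigma_iL}$ vanishing because $\sigma_iL\not\succeq L$; by \eqref{eqn:corKT_simplerefln} this equals $p^{\sigma_iL}_{\sigma_iL}=\prod_{E\in\mathcal{G}_1^{\partial\sigma_iL}}\alpha_E$. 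Axiom (KT3) asks that $h_N=0$ for $N\not\succeq\sigma_iL$; as such an $N$ also satisfies $N\not\succeq L$, whence $p^L_N=0$, this reduces to the vanishing $p^L_{\sigma_iN}=0$, i.e. to the order-compatibility statement $(\ast)$: $\sigma_iN\succeq L\Rightarrow N\succeq\sigma_iL$. The remaining subcase $\sigma_iL=L$ is handled by the same degree bookkeeping: $(\ast)$, which now reads $\sigma_iN\succeq L\Rightarrow N\succeq L$, confines the support of $h$ to $\{N\succeq L\}$, and on this set the Palais--Smale inequality $\#\mathcal{G}_1^{\partial N}\ge\#\mathcal{G}_1^{\partial L}$ forces a minimal-support element of $h$ to carry a KT-coefficient of strictly negative degree; hence $h=0$ and $\sigma_i\cdot p^L=p^L$.

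I expect $(\ast)$ to be the main obstacle. In the Coxeter/flag-variety situation it is a Bruhat-order lifting property, but here $\sigma_i$ is only an automorphism of the \emph{unoriented} moment graph and may reverse the orientation of individual edges, so it is not order preserving a priori. To establish $(\ast)$ I would combine the Palais--Smale property (Proposition~\ref{prop:homg-forest-are-P-S}) with the edge-comparison Lemmas~\ref{lem:EdgesAdjacentSimpleRefln} and~\ref{lem:EdgesAdjacent}, which pin down exactly how the outgoing edges and their labels at $L$, at $\sigma_iL$, and at their mutation neighbours match up: the idea is to take a directed path realising $\sigma_iN\succeq L$ and to transport it across the single $\sigma_i$-edge, reassembling it into a directed path realising $N\succeq\sigma_iL$, while processing fixed points in the fixed total order so that the induction on path length stays controlled.
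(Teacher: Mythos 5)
Your architecture is genuinely different from the paper's, and several of its components are sound: the involution trick that disposes of the case $\sigma_iL\succ L$ (valid because $H_T^\bullet(X)$ is a torsion-free $R$-module, being a submodule of $\bigoplus_{N\in X^T}R$ by Theorem~\ref{thm:GKM}); the divisibility $g:=\sigma_i\cdot p^L-p^L\in\alpha H_T^\bullet(X)$ for $\alpha=\epsilon_i-\epsilon_{i+1}$, where your symmetrisation $(1+\sigma_i)(\alpha w)=\alpha(1-\sigma_i)(w)$ does give $g_N-g_{\sigma_iN}\in\alpha^2 R$ (granted the classification of $\pm\alpha$-labelled edges as exactly the $\{N,\sigma_iN\}$-edges, which is true but is itself a combinatorial claim about mutations that needs proof, not just a citation of \eqref{eqn:Characters-in-moment-graph}); and the (KT1)/(KT2) checks for $h=g/\alpha$ via \eqref{eqn:corKT_simplerefln} and Lemma~\ref{lem:LengthSimpleRefln}. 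By contrast, the paper never divides by $\alpha$: it expands $q=\sigma_i\cdot p^L$ in the KT-basis and pins down the coefficients directly.

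The genuine gap is exactly the statement $(\ast)$ that you flag: $\sigma_iN\succeq L\Rightarrow N\succeq\sigma_iL$. Both your (KT3) verification for $h$ and your entire treatment of the case $\sigma_iL=L$ rest on it, it is the analogue of the Bruhat-order lifting property (which is simply not available in this setting), and the sketched path-transport induction does not close. Concretely, take a minimal directed path $\sigma_iN=x_0\to x_1\to\cdots\to x_r=L$ with $N\prec\sigma_iN$, and transport its first edge through $\sigma_i$ to get an unoriented edge between $N$ and $\sigma_ix_1$. If $x_1\succeq\sigma_ix_1$, the Palais--Smale counts force the orientation $N\to\sigma_ix_1$ and the induction proceeds; but if $x_1\prec\sigma_ix_1$, the counts also permit the orientation $\sigma_ix_1\to N$, which forces $\#\mathcal{G}_1^{\partial N}=\#\mathcal{G}_1^{\partial x_1}$. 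In that configuration the inductive hypothesis applied to the shorter path $x_1\to\cdots\to L$ only yields $\sigma_ix_1\succeq\sigma_iL$, i.e.\ information about a vertex lying strictly \emph{above} $N$, and Palais--Smale excludes any edge between $N$ and $x_1$ (their out-degrees coincide), so no directed path from $N$ to $\sigma_iL$ can be spliced together. This is precisely the difficulty the paper's proof is engineered to avoid: by expanding in the basis $\{p^N\}$ and combining the (KT2) degree bound, (KT3) and Palais--Smale minimality, it confines all the work to fixed points $N$ for which $\sigma_iN\to L$ is a \emph{single edge}, where transport-and-reorient is exactly Lemmas~\ref{lem:LengthSimpleRefln} and~\ref{lem:EdgesAdjacentSimpleRefln} together with \eqref{eqn:KTClassAdjacent}. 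To complete your argument you would either have to give an honest proof of $(\ast)$ (a substantial new lemma, not a consequence of the results you cite), or retreat to the paper's basis-expansion-plus-degree strategy.
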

\begin{proof}
For convenience, let us denote $\mathcal{G}:=\mathcal{G}(X,T,\chi)$ and $q:=\sigma_i\cdot p^L$, that is $q_N=\sigma_i(p^L_{\sigma_i N})$ for any $N\in X^T$.
Since $\{p^L\}_{L\in X^T}$ is an $R$-basis, there exist (uniquely determined) homogeneous polynomials $c_N\in R$ such that
\begin{equation}\label{eqn:q-expansion-in-basis}
q=\sum_{N\in X^T} c_N p^N.
\end{equation}
Hence we have to show that the polynomials $c_N$ are of the form as claimed in (\ref{eqn:sigma_i-action}).

By the linearity of the $\mathfrak{S}_{\underline{k}}$-action and by (KT2) we obtain
\[
\deg(q)=\deg(p^L)=\#\mathcal{G}_1^{\partial L}=\deg(c_N)+\deg(p^N)=\deg(c_N)+\#\mathcal{G}_1^{\partial N},
\] 
and we deduce that $c_N\neq 0$ only if $\#\mathcal{G}_1^{\partial N}\leq \#\mathcal{G}_1^{\partial L}$.

Secondly, assume that $N$ is minimal (with respect to the partial order $\preceq$) such that $q_N\neq 0$. If this is the case, also $p_{\sigma_i N}^L\neq 0$ and hence $\sigma_i N\succeq L$ by (KT3), and by the Palais-Smale property $\#\mathcal{G}_1^{\partial \sigma_i N}\geq \#\mathcal{G}_1^{\partial L}$. On the other hand, since $N$ is minimal, it follows from (KT3) that $c_N \neq 0$ and hence $\#\mathcal{G}_1^{\partial N}\leq \#\mathcal{G}_1^{\partial L}$. Therefore $\#\mathcal{G}_1^{\partial \sigma_i N}\geq \#\mathcal{G}_1^{\partial L} \geq \#\mathcal{G}_1^{\partial N}$ so that the Palais-Smale property implies $N\preceq \sigma_i N$ since  $N$ and $\sigma_iN$ are always comparable. If we assume additionally that $\sigma_i N\neq L$, we have $N\prec \sigma_i N$, and $\#\mathcal{G}_1^{\partial N}=\#\mathcal{G}_1^{\partial \sigma_i N}-1$ holds by Lemma~\ref{lem:LengthSimpleRefln}.
Therefore, if $N\neq L,\sigma_i L$, 
\[
\#\mathcal{G}_1^{\partial L}\geq \#\mathcal{G}_1^{\partial N}=\#\mathcal{G}_1^{\partial \sigma_i N}-1\geq \#\mathcal{G}_1^{\partial L}
\]
and we conclude $\#\mathcal{G}_1^{\partial N}=\#\mathcal{G}_1^{\partial L}$. 

Combining the above computations the expansion (\ref{eqn:q-expansion-in-basis}) reduces to
\begin{equation}\label{eqn:sumKTclasses}
q=
\left\{
\begin{array}{ll}
c_L p^L+c_{\sigma_i L} p^{\sigma_i L}+\sum_{\substack{N\neq L,\sigma_i L\\
L,N\prec \sigma_i N\\ \#\mathcal{G}_1^{\partial N}=\#\mathcal{G}_1^{\partial L}}}c_N p^N, &\hbox{if }\sigma_iL\prec L,\\
\\
c_L p^L+\sum_{\substack{N\neq L,\sigma_i L\\
L,N\prec \sigma_i N\\ \#\mathcal{G}_1^{\partial N}=\#\mathcal{G}_1^{\partial L}}}c_N p^N, &\hbox{otherwise}.
\end{array}
\right. 
\end{equation}
Now, we have to distinguish the two cases $\sigma_i L\prec L$ and $\sigma_iL\not\prec L$.

For {\bf $\sigma_i L\prec L$}, to obtain the claim, we have to verify that $c_L=1$, $c_{\sigma_i L}= (\epsilon_i-\epsilon_{i+1})$ and $c_N=0$ for all other $N \in X^T$, as introduced in \eqref{eqn:q-expansion-in-basis}. By the definition of $q_L$ and (KT3)
\[
q_L=\sigma_i(p^L_{\sigma_i L})=0,
\]
and by definition of $q_{\sigma_i L}$ and \eqref{eqn:corKT_simplerefln}
\[
q_{\sigma_i L}=\sigma_i(p^L_L)=(\epsilon_{i}-\epsilon_{i+1})p^{\sigma_i L}_{\sigma_i L}.
\]
We show now that this implies $c_L=1$, $c_{\sigma_i L} = (\epsilon_i-\epsilon_{i+1})$.
Since $q=\sigma_i\cdot p^L$, we can apply \eqref{eqn:corKT_simplerefln} and consider the $L$-coordinate to get
\[ q_L = p_L^L+(\epsilon_ {i}-\epsilon_{i+1})p_L^{\sigma_i L}. \]
On the other hand, by \eqref{eqn:corKT_simplerefln}, we have 
\[
p_L^L=\sigma_i\big((\epsilon_i-\epsilon_{i+1}) \sigma_i(p^{\sigma_i L}_{L})\big)=(\epsilon_{i+1}-\epsilon_i) p^{\sigma_i L}_{L},
\]
and hence $q_L=0$. By looking at the $\sigma_iL$-coordinate, it follows again from \eqref{eqn:corKT_simplerefln}  
 that
\[
q_{\sigma_i L} = p^L_{\sigma_i L}+(\epsilon_i-\epsilon_{i+1})p^{\sigma_i L}_{\sigma_i L}=(\epsilon_i-\epsilon_{i+1})p^{\sigma_i L}_{\sigma_i L}
\]
since $p^L_{\sigma_i L}=0$ by (KT3). Thus $q_N=p^L_N+(\epsilon_i-\epsilon_{i+1})p^{\sigma_i L}_N$ holds for $N=L,\sigma_i L$.

To conclude the study of the case $\sigma_i L\prec L$, it is left to show that $q_N=p^L_N+(\epsilon_{i}-\epsilon_{i+1})p^{\sigma_i L}_N$ for any $N\neq L,\sigma_i L$ such that $L,N\prec \sigma_i N$ and $\#\mathcal{G}_1^{\partial N}=\#\mathcal{G}_1^{\partial L}$. Note that the condition on $\#\mathcal{G}^{\partial N}_1$ implies $N\not\succ L$ and so $p^L_N= 0$, as $N\neq L$, and the desired equality reduces to $q_N = (\epsilon_i-\epsilon_{i+1})p^{\sigma_iL}_N$.

Recall that $\sigma_i N \succeq L$. It follows immediately from $\#\mathcal{G}_1^{\partial\sigma_i N}=\#\mathcal{G}^{\partial N}+1=\#\mathcal{G}^{\partial L}+1$ and the Palais-Smale property that $\sigma_i N\rightarrow L\in\mathcal{G}_1$, and so also $N\rightarrow\sigma_i L\in\mathcal{G}_1$. Then by \eqref{eqn:KTClassAdjacent} we have
\[
p^L_{\sigma_i N}=\prod_{E\in\mathcal{G}_1^{\partial \sigma_i N}\setminus\{\sigma_i N\rightarrow L\}}\!\!\alpha_E,\qquad\qquad p^{\sigma_i L}_N=\prod_{F\in\mathcal{G}_1^{\partial  N}\setminus\{ N\rightarrow\sigma_i L\}}\!\!\alpha_F.
\]
By Lemma \ref{lem:EdgesAdjacentSimpleRefln},
\[
\left\{\alpha_E\mid E\in\mathcal{G}_1^{\partial \sigma_i N}\right\}=\Big\{\sigma_i(\alpha_F)\mid F\in\mathcal{G}_1^{\partial N}\Big\}\cup\Big\{\epsilon_{i+1}-\epsilon_i\Big\}
\]
so that
\begin{align*}
q_N=\sigma_i(p_{\sigma_iN}^L)&=\prod_{ E\in\mathcal{G}_1^{\partial \sigma_i N}\setminus\{\sigma_i N\rightarrow L\}}\sigma_i(\alpha_E)\\
&=\sigma_i(\epsilon_{i+1}-\epsilon_i)\cdot \prod_{F\in\mathcal{G}_1^{\partial  N}\setminus\{ N\rightarrow \sigma_iL\}}\alpha_F\\
&=(\epsilon_i-\epsilon_{i+1})p^{\sigma_i L}_N.
\end{align*}
Finally, let $\sigma_iL\not\prec L$. We claim that
\[
\left(\#\mathcal{G}_1^{\partial N}=\#\mathcal{G}_1^{\partial L} \hbox{ and }q_N\neq 0\right)\quad\Rightarrow\quad N=L.
\]
From this it follows by (KT1) and (KT3), that $c_N=0$ for any $N\neq L$ and $c_L=1$ solves \eqref{eqn:sumKTclasses}, as desired. 

Assume by contradiction that $q_N\neq 0$ and $N\neq L$. Recall that we have an arrow $\sigma_i N\rightarrow L\in\mathcal{G}_1$, and hence, by \eqref{eqn:SkActionGraph}, either $N\rightarrow \sigma_i L\in\mathcal{G}_1$ or $\sigma_i L\rightarrow N\in\mathcal{G}_1$. If $\sigma_i L=L$, the existence of such an edge contradicts the Palais-Smale property, as $\#\mathcal{G}_1^{\partial N}=\#\mathcal{G}_1^{\partial L}$ implies that $L=\sigma_i L$ is not comparable with $N$. If, instead, $\sigma_iL\succ L$, then by Lemma~\ref{lem:LengthSimpleRefln} we have $\#\mathcal{G}_1^{\partial \sigma_i L}=\#\mathcal{G}_1^{\partial L}+1$ and so
\[
\#\mathcal{G}_1^{\partial \sigma_i L}=\#\mathcal{G}_1^{\partial L}+1>\#\mathcal{G}_1^{\partial L}=\#\mathcal{G}_1^{\partial N}
\]
and by the Palais-Smale property we have that $\sigma_i L\rightarrow N$. By the proof of \eqref{eqn:KTClassAdjacent}, we have that 
\[
\mathcal{G}_1^{\partial \sigma_i L}=\sigma_i(\mathcal{G}_1^{\partial L})\cup\{\sigma_i L\rightarrow L\}
\]
and, since $N\neq L$, we must have $\sigma_i(\sigma_i L\rightarrow N)=L\rightarrow\sigma_i N\in\mathcal{G}_1$, but this contradicts $\sigma_i N\succ L$.
\end{proof}

\begin{lem}\label{lem:SkActionKTClass}(cf. \cite[Lemma 3.6]{Tymoczko2008}) Let $M \in \mr{rep}_\C(\Delta_n)$ be homogeneous and nilpotent. Let $X=\mr{Gr}_\mb{e}(M)$ and  $\mathcal{G}=\mathcal{G}(X,T,\chi)$. Let $\{p^L\}_{L\in X^T}$ be the unique KT-basis. Then, for any $\sigma\in\mathfrak{S}_{\underline{k}}$ and $L\in X^T$ 
\[
\sigma\cdot p^L=p^L+\sum_{N\prec L}c^{L,\sigma}_N p^N,
\]
where $c_N^{L,\sigma}$ is a homogeneous polynomial of degree $\#\mathcal{G}_1^{\partial L}-\#\mathcal{G}_1^{\partial N}$. 
\end{lem}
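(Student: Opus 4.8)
The plan is to induct on the length $\ell(\sigma)$ of $\sigma$ written as a word in the Coxeter generators $\{\sigma_i\mid i\in[d]\setminus\underline{k'}\}$ of $\mathfrak{S}_{\underline{k}}=\mathfrak{S}_{k_1}\times\ldots\times\mathfrak{S}_{k_{d_0}}$. The case $\sigma=\mr{id}$ is trivial and the case $\ell(\sigma)=1$ is exactly Proposition~\ref{prop:ActionSimpleReflnKTClass}. For $\ell(\sigma)\geq 2$ I would fix a factorisation $\sigma=\sigma_i\sigma'$ with $\ell(\sigma')=\ell(\sigma)-1$ and use that $\mathfrak{S}_{\underline{k}}$ acts on $H_T^\bullet(X)$ as a genuine group (Proposition~\ref{propn:GeometricPermutation-Action}), so that $\sigma\cdot p^L=\sigma_i\cdot(\sigma'\cdot p^L)$.

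The key algebraic observation is that the action is twisted $R$-linear, that is $\sigma_i\cdot(cf)=\sigma_i(c)\,(\sigma_i\cdot f)$ for $c\in R$ and $f\in H_T^\bullet(X)$; this is immediate from the localisation formula in Proposition~\ref{propn:GeometricPermutation-Action}, since $c$ acts diagonally by multiplication. Feeding the inductive hypothesis $\sigma'\cdot p^L=p^L+\sum_{N\prec L}c_N^{L,\sigma'}p^N$ into this identity yields
\[
\sigma\cdot p^L=\sigma_i\cdot p^L+\sum_{N\prec L}\sigma_i(c_N^{L,\sigma'})\,(\sigma_i\cdot p^N),
\]
and I would then expand each $\sigma_i\cdot p^N$ and $\sigma_i\cdot p^L$ by Proposition~\ref{prop:ActionSimpleReflnKTClass}: every such term equals either the class itself or the class plus $(\epsilon_i-\epsilon_{i+1})$ times the class at the strictly smaller fixed point $\sigma_i N$ (resp. $\sigma_i L$).

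The main point to verify is the triangularity. Every class produced lies at some $M\preceq L$: the term $\sigma_i\cdot p^L$ contributes $p^L$ and possibly $p^{\sigma_iL}$ with $\sigma_iL\prec L$, while each summand contributes $p^N$ with $N\prec L$ and possibly $p^{\sigma_iN}$ with $\sigma_iN\prec N\prec L$. Moreover $p^L$ can only come from $\sigma_i\cdot p^L$: a contribution $p^{\sigma_iN}=p^L$ from the sum would force $L=\sigma_iN\prec N\prec L$, which is impossible. Hence the coefficient of $p^L$ is exactly $1$, and all other classes appearing are indexed by $M\prec L$, giving the asserted shape $\sigma\cdot p^L=p^L+\sum_{M\prec L}c_M^{L,\sigma}p^M$.

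Finally, the degree statement follows formally. The $\mathfrak{S}_{\underline{k}}$-action permutes the $\epsilon_j$ and fixes $\delta$, hence it preserves the grading on $R$ and therefore on $H_T^\bullet(X)$; since $p^L$ is homogeneous of degree $\#\mathcal{G}_1^{\partial L}$ by (KT1)--(KT2), so is $\sigma\cdot p^L$, and comparing homogeneous components in the free graded $R$-module $H_T^\bullet(X)$ forces each $c_M^{L,\sigma}$ to be homogeneous with $\deg c_M^{L,\sigma}=\#\mathcal{G}_1^{\partial L}-\#\mathcal{G}_1^{\partial M}$. I expect the only delicate point to be the triangularity bookkeeping of the previous paragraph, which rests entirely on the fact that Proposition~\ref{prop:ActionSimpleReflnKTClass} produces only strictly lower terms; the twisted linearity and the degree count are routine.
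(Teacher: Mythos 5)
Your proof is correct and follows essentially the same route as the paper's: induction on the Coxeter length, factoring $\sigma=\sigma_i\sigma'$, feeding the inductive hypothesis into the action of $\sigma_i$, expanding each term via Proposition~\ref{prop:ActionSimpleReflnKTClass}, and then checking triangularity and degrees. If anything you are slightly more careful than the paper, whose displayed computation leaves the coefficients $c_N^{L,\sigma'}$ untwisted after applying $\sigma_{i_1}$; your explicit use of the twisted linearity $\sigma_i\cdot(cf)=\sigma_i(c)\,(\sigma_i\cdot f)$ is the precise statement (and harmless for the conclusion, since twisting preserves homogeneity and degree).
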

\begin{proof}
The proof is by induction on the Coxeter length $\ell(\sigma)$ of $\sigma$. If $\ell(\sigma)=0$, then $\sigma$ is just the identity and there is nothing to show. The case $\ell(\sigma)=1$ is Proposition \ref{prop:ActionSimpleReflnKTClass}. Otherwise, $\ell(\sigma)=l\geq 2$ and there exist simple reflections $\sigma_{i_1}\sigma_{i_2}, \ldots, \sigma
_{i_l}\in\mathfrak{S}_{\underline{k}}$ such that $\sigma=\sigma_{i_1}\sigma_{i_2}\ldots\sigma_{i_l}$. We set  $\sigma':=\sigma_{i_2}\ldots\sigma_{i_l}$ and get
by induction 
\[
\sigma'\cdot p^L=p^L+\sum_{N\prec L}c^{L,\sigma'}_N p^ N,
\]
Hence, by the linearity of $\sigma_{i_1}$ and by Proposition \ref{prop:ActionSimpleReflnKTClass} we obtain
\begin{align*}
\sigma\cdot p^L&=\sigma_{i_1}(\sigma'\cdot p^L)\\
&=\sigma_{i_1}(p^L)+\sum_{N\prec L}c^{L,\sigma'}_N \sigma_{i_1}(p^ N)\\
&=\sigma_{i_1}(p^L)+\sum_{\substack{N\prec L\\
\sigma_{i_1}N\prec N}}c^{L,\sigma'}_N (p^ N+(\epsilon_{i_1}-\epsilon_{i_1+1})p^{\sigma_{i_1} N})
+\sum_{\substack{N\prec L\\
\sigma_{i_1}N\not\prec N}}c^{L,\sigma'}_N p^ N\\
&=p^L+\sum_{\substack{N\prec L\\ \sigma_{i_1} N\preceq  N}} c^{L,\sigma'}_N p^N+\sum_{\substack{N\prec \sigma_iN\\ N\prec L}}\big(c_N^{L,\sigma'}+(\epsilon_{i_1}-\epsilon_{i_1+1})c_{\sigma_i N}^{L,\sigma'}\big)p^N,
\end{align*}
where we set by convention $c_L^{L,\sigma'}=1$ and $c_N^{L,\sigma'}=0$ if $N\not\preceq L$. The claim about the homogeneity and degree of the polynomials $c_N^{L,\sigma}$ follows, by induction, from the previous formula.
\end{proof}

Note that the ring $R$ is $\mathbb{Z}$-graded, and this grading induces a grading on each free $R$-module. We will write $\rm Triv^d$ for the rank one $R$-module concentrated in degree $2d$, considered as a $\mathfrak{S}_{\underline{k}}$-representation in the obvious way.

The following result extends \cite[Theorem 3.8]{Tymoczko2008}.
\begin{thm}\label{thm:mainThmTrivialRepn}Let $M \in \mr{rep}_\C(\Delta_n)$ be homogeneous and nilpotent. Let $X=\mr{Gr}_\mb{e}(M)$ and  $\mathcal{G}=\mathcal{G}(X,T,\chi)$. Let $\{p^L\}_{L\in X^T}$ be the unique KT-basis. Then the $\mathfrak{S}_{\underline{k}}$-representation $H_T^\bullet(X)$ is isomorphic to $\bigoplus_{L\in X^T}{\rm Triv}^{\#\mathcal{G}_1^{\partial L}}$ as a graded twisted $R$-module.
\end{thm}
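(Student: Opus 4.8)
The plan is to manufacture, out of the KT-basis $\{p^L\}$, a new $R$-basis which is pointwise fixed by $\mathfrak{S}_{\underline{k}}$; the $R$-span of each new basis vector will then visibly be a copy of a twisted trivial representation, and the theorem drops out. Since $\Q$ contains $\tfrac{1}{|\mathfrak{S}_{\underline{k}}|}$, I would symmetrize and set
\[
\widetilde{p}^L:=\frac{1}{|\mathfrak{S}_{\underline{k}}|}\sum_{\sigma\in\mathfrak{S}_{\underline{k}}}\sigma\cdot p^L.
\]
Because $\mathfrak{S}_{\underline{k}}$ acts by a genuine (albeit $R$-semilinear) group action, the reindexing $\sigma\mapsto\tau\sigma$ shows $\tau\cdot\widetilde{p}^L=\widetilde{p}^L$ for every $\tau\in\mathfrak{S}_{\underline{k}}$, so each $\widetilde{p}^L$ is $\mathfrak{S}_{\underline{k}}$-invariant. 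Moreover every $\sigma$ acts on $R$ by a degree-preserving automorphism (it merely permutes the degree-two generators $\epsilon_j$), so each $\sigma\cdot p^L$ is homogeneous of cohomological degree $2\#\mathcal{G}_1^{\partial L}$ by (KT2), and hence so is $\widetilde{p}^L$.

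Next I would invoke Lemma~\ref{lem:SkActionKTClass}: averaging the identity $\sigma\cdot p^L=p^L+\sum_{N\prec L}c^{L,\sigma}_Np^N$ over $\sigma$ yields
\[
\widetilde{p}^L=p^L+\sum_{N\prec L}b^L_Np^N,\qquad b^L_N:=\frac{1}{|\mathfrak{S}_{\underline{k}}|}\sum_{\sigma\in\mathfrak{S}_{\underline{k}}}c^{L,\sigma}_N\in R,
\]
with $b^L_N$ homogeneous of degree $\#\mathcal{G}_1^{\partial L}-\#\mathcal{G}_1^{\partial N}$. Thus the transition matrix from $\{p^L\}$ to $\{\widetilde{p}^L\}$ is unitriangular with respect to any linear refinement of the partial order $\preceq$ (diagonal entries $b^L_L=1$, support contained in $\{N\preceq L\}$). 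Such a matrix is invertible over $R$, so $\{\widetilde{p}^L\}_{L\in X^T}$ is again an $R$-basis of $H_T^\bullet(X)$.

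Finally I would assemble the isomorphism. For each $L$ the submodule $R\cdot\widetilde{p}^L$ is free of rank one with generator in degree $2\#\mathcal{G}_1^{\partial L}$, and since $\widetilde{p}^L$ is invariant, semilinearity gives $\sigma\cdot(r\,\widetilde{p}^L)=\sigma(r)\,\widetilde{p}^L$ for all $r\in R$; this is exactly the twisted trivial representation ${\rm Triv}^{\#\mathcal{G}_1^{\partial L}}$. As $\{\widetilde{p}^L\}$ is an $R$-basis we have $H_T^\bullet(X)=\bigoplus_{L\in X^T}R\,\widetilde{p}^L$, and sending the canonical generator of ${\rm Triv}^{\#\mathcal{G}_1^{\partial L}}$ to $\widetilde{p}^L$ defines a grading-preserving, $R$-semilinear, $\mathfrak{S}_{\underline{k}}$-equivariant isomorphism $\bigoplus_{L\in X^T}{\rm Triv}^{\#\mathcal{G}_1^{\partial L}}\xrightarrow{\ \sim\ }H_T^\bullet(X)$, as claimed.

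The genuinely delicate point is not the symmetrization itself but verifying that $\{\widetilde{p}^L\}$ remains a basis: this rests entirely on the support and degree constraints in Lemma~\ref{lem:SkActionKTClass} forcing the change-of-basis matrix to be unitriangular over $R$, so that it is invertible even though its off-diagonal entries are non-constant homogeneous polynomials rather than scalars. Everything else is formal once one observes that averaging a semilinear action still lands in the invariants and preserves both the grading and the leading term $p^L$.
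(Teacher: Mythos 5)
Your proposal is correct and follows essentially the same route as the paper's own proof: the paper also defines the symmetrized classes $h^L:=\frac{1}{k_1!\cdots k_{d_0}!}\sum_{\sigma\in\mathfrak{S}_{\underline{k}}}\sigma\cdot p^L$ (note $|\mathfrak{S}_{\underline{k}}|=k_1!\cdots k_{d_0}!$), invokes Lemma~\ref{lem:SkActionKTClass} to show the transition matrix is unitriangular with respect to a total order refining $\preceq$, and concludes that the invariant classes form an $R$-basis. Your write-up only adds the explicit reindexing argument for invariance and the final identification of each $R\cdot\widetilde{p}^L$ with ${\rm Triv}^{\#\mathcal{G}_1^{\partial L}}$, both of which the paper leaves implicit.
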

\begin{proof}
To prove the claim we show that there exists an $R$-basis $\{h^L\}_{L\in X^T}$ of $H_T^\bullet(X)$ having the property that $\sigma\cdot h_L=h_L$ for any $\sigma\in\mathfrak{S}_{\underline{k}}$ and any $L\in X^T$.

We define $h^L:=\frac{1}{k_1!k_2!\cdot\ldots\cdot k_{d_0}!}\sum_{\sigma\in\mathfrak{S}_{\underline{k}}}\sigma\cdot p^L $ and observe that by Lemma \ref{lem:SkActionKTClass} we have
\begin{align*}
h^L&=\frac{1}{k_1!k_2!\cdot\ldots\cdot k_{d_0}!}\sum_{\sigma\in\mathfrak{S}_{\underline{k}}}(p^L+\sum_{N\prec L}c_N^{L,\sigma} p^N)\\ 
&=p^L+\frac{1}{k_1!k_2!\cdot\ldots\cdot k_{d_0}!}\sum_{N\prec L}\left(\sum_{\sigma\in\mathfrak{S}_{\underline{k}}}c_N^{L,\sigma}\right) p^N.
\end{align*}
Therefore, if we write $h^L=\sum_{N\in X^T} d_N^L p^N$, we always have $d_L^L=1$ and $d_N^L=0$ unless $N\preceq L$.

To conclude, pick a total order on $X^T$ refining $\prec$.  By the previous consideration we see that the matrix with entry $(N,L)$ given by $d_N^L$ (and columns/rows ordered according to the chosen total order) is upper triangular with ones on the diagonal, and so $\{h^L\}_{L\in X^T}$ is a basis too.
\end{proof}
Since the action on $R$ is twisted, the $\mathfrak{S}_{\underline{k}}$-action on $H_T(X)$ as a $\mathbb{C}$-vector space is not trivial. Denote by $P$ the $\mathbb{C}$-vector space $R$ with the $\mathfrak{S}_{\underline{k}}$-representation structure we have been working with (the one induced by the $\mathfrak{S}_{\underline{k}}$-action on $T$). In the following statement ${\rm Triv}^d$ denotes a one-dimensional $\mathbb{Z}$-graded $\mathbb{C}$-vector space concentrated in degree $d$ equipped with trivial $\mathfrak{S}_{\underline{k}}$-action.
\begin{cor}Under the hypotheses of the previous theorem, 
\[H_T^\bullet(X)\cong\bigoplus_{L\in X^T}{\rm Triv}^{\#\mathcal{G}_1^{\partial L}}\otimes P\]
as a graded $\mathfrak{S}_{\underline{k}}$-module.
\end{cor}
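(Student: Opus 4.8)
The plan is to obtain the corollary from Theorem~\ref{thm:mainThmTrivialRepn} purely formally, by forgetting the $R$-module structure and retaining only the graded $\mathbb{C}[\mathfrak{S}_{\underline{k}}]$-module structure. The proof of Theorem~\ref{thm:mainThmTrivialRepn} already supplies the essential input: an $R$-basis $\{h^L\}_{L\in X^T}$ of $H_T^\bullet(X)$ consisting of homogeneous classes that are \emph{fixed} by the whole of $\mathfrak{S}_{\underline{k}}$, with $h^L$ sitting in the graded degree recorded by ${\rm Triv}^{\#\mathcal{G}_1^{\partial L}}$. Thus $H_T^\bullet(X)=\bigoplus_{L\in X^T}R\cdot h^L$, and on each summand the action is $\sigma\cdot(r\,h^L)=\sigma(r)\,h^L$, where $\sigma$ acts on $R$ through the twisted action of Proposition~\ref{propn:GeometricPermutation-Action}.

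First I would analyse a single free rank-one summand $R\cdot h^L$ as a graded $\mathfrak{S}_{\underline{k}}$-representation over $\mathbb{C}$. Since the twisted action on $R$ is induced by the linear action on $\mathfrak{X}^*(T)$, it preserves the grading, so $P$ (the space $R$ equipped with this twisted action) is a genuine graded $\mathfrak{S}_{\underline{k}}$-representation. The $\mathbb{C}$-linear bijection $P\to R\cdot h^L$, $r\mapsto r\,h^L$, is then $\mathfrak{S}_{\underline{k}}$-equivariant, because $\sigma\cdot(r\,h^L)=\sigma(r)\,h^L$ by invariance of the generator, and it shifts the grading up by $\deg h^L$. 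Packaging the degree shift into the one-dimensional trivial line, this says exactly
\[
R\cdot h^L\;\cong\;{\rm Triv}^{\#\mathcal{G}_1^{\partial L}}\otimes P
\]
as graded $\mathfrak{S}_{\underline{k}}$-modules: the factor ${\rm Triv}^{\#\mathcal{G}_1^{\partial L}}$ accounts for the invariant generator in its prescribed degree, while $P$ carries the twist introduced by multiplication with $R$.

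Finally I would assemble the summands: forming the direct sum over $L\in X^T$ of the isomorphisms from the previous step, and using that the forgetful functor respects direct sums, yields
\[
H_T^\bullet(X)\;\cong\;\bigoplus_{L\in X^T}{\rm Triv}^{\#\mathcal{G}_1^{\partial L}}\otimes P
\]
as graded $\mathfrak{S}_{\underline{k}}$-modules, which is the assertion.

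I do not expect a serious obstacle, since the argument is entirely formal once the invariant $R$-basis $\{h^L\}$ from Theorem~\ref{thm:mainThmTrivialRepn} is in hand. The one point that genuinely needs care is the interplay of the twist with the grading: one must observe that discarding the $R$-action does \emph{not} make the $\mathfrak{S}_{\underline{k}}$-representation trivial — although it is trivial on the generators $h^L$, multiplication by $R$ feeds in the nontrivial twisted representation $P$ — and one must keep track of the degree shift by $\deg h^L$ so that the grading on each summand matches ${\rm Triv}^{\#\mathcal{G}_1^{\partial L}}\otimes P$.
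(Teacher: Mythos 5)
Your proposal is correct and takes essentially the same route as the paper, which states this corollary without proof as an immediate formal consequence of Theorem~\ref{thm:mainThmTrivialRepn}: the $\mathfrak{S}_{\underline{k}}$-invariant basis $\{h^L\}$ constructed there gives $H_T^\bullet(X)=\bigoplus_{L\in X^T}R\cdot h^L$, and your identification of each summand with ${\rm Triv}^{\#\mathcal{G}_1^{\partial L}}\otimes P$ via $r\mapsto r\,h^L$ is precisely the step the paper leaves implicit. Your closing remark about the interplay of the twist with the grading correctly isolates the only point needing care, so nothing is missing.
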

The following corollary deals with the induced permutation action on usual cohomology.
\begin{cor}\label{cor:Cor2mainThm}Under the same hypotheses as Theorem \ref{thm:mainThmTrivialRepn}, 
\[H^\bullet(X)\cong\bigoplus_{L\in X^T}{\rm Triv}^{\#\mathcal{G}_1^{\partial L}}.\]
\end{cor}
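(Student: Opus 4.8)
The plan is to deduce the non-equivariant statement from the equivariant one (Theorem~\ref{thm:mainThmTrivialRepn}) via equivariant formality. Since $(X,T)$ is a GKM-variety, its rational cohomology is concentrated in even degrees, so $X$ is equivariantly formal: the restriction map $H_T^\bullet(X)\to H^\bullet(X)$ induced by the inclusion of a fibre $X\hookrightarrow E_T\times_T X$ is surjective with kernel $R_+\cdot H_T^\bullet(X)$, where $R_+\subset R$ denotes the ideal of positive-degree elements. Equivalently, $H^\bullet(X)\cong H_T^\bullet(X)\otimes_R \Q$, with $\Q=R/R_+$. The freeness of $H_T^\bullet(X)$ as an $R$-module, needed for this, is guaranteed by the existence of the KT-basis (Theorem~\ref{trm:homg-forest-have-unique-KT-basis}).

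The key observation is that this isomorphism is compatible with the $\mathfrak{S}_{\underline{k}}$-action. The geometric action of $\mathfrak{S}_{\underline{k}}$ on $X$ (Lemma~\ref{lem:geomSkAction}) induces compatible actions on both $H^\bullet(X)$ and $H_T^\bullet(X)$, and by naturality of the Borel construction the quotient map $H_T^\bullet(X)\to H^\bullet(X)$ is $\mathfrak{S}_{\underline{k}}$-equivariant. The crucial point is that, although the $\mathfrak{S}_{\underline{k}}$-action on $R$ is the nontrivial (twisted) one induced by its linear action on $\mathfrak{X}^*(T)$, it preserves the grading and hence acts trivially on the degree-zero quotient $\Q=R/R_+$. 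Thus the twist present in Theorem~\ref{thm:mainThmTrivialRepn} disappears upon passing to ordinary cohomology, which is exactly what distinguishes the present statement from the equivariant one.

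Concretely, I would take the $\mathfrak{S}_{\underline{k}}$-invariant $R$-basis $\{h^L\}_{L\in X^T}$ of $H_T^\bullet(X)$ constructed in the proof of Theorem~\ref{thm:mainThmTrivialRepn}, where each $h^L$ is homogeneous of cohomological degree $2\#\mathcal{G}_1^{\partial L}$ and satisfies $\sigma\cdot h^L=h^L$ for all $\sigma\in\mathfrak{S}_{\underline{k}}$. Since $H_T^\bullet(X)$ is free over $R$ on this basis, the images $\overline{h}^L$ under the quotient map form a $\Q$-basis of $H^\bullet(X)$, with $\overline{h}^L$ of cohomological degree $2\#\mathcal{G}_1^{\partial L}$. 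By equivariance of the quotient map together with the triviality of the action on $\Q$, each $\overline{h}^L$ is again $\mathfrak{S}_{\underline{k}}$-invariant. Hence $H^\bullet(X)$ admits a homogeneous basis of $\mathfrak{S}_{\underline{k}}$-invariant elements, yielding the claimed decomposition $H^\bullet(X)\cong\bigoplus_{L\in X^T}{\rm Triv}^{\#\mathcal{G}_1^{\partial L}}$ as graded $\mathfrak{S}_{\underline{k}}$-representations, with the summand indexed by $L$ sitting in degree $2\#\mathcal{G}_1^{\partial L}$.

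The only technical inputs are equivariant formality and the fact that the forgetful map $H_T^\bullet(X)\to H^\bullet(X)$ intertwines the two $\mathfrak{S}_{\underline{k}}$-actions; both are standard given the GKM hypothesis, so the substantive work has already been carried out in Theorem~\ref{thm:mainThmTrivialRepn}, and this corollary is essentially a formal consequence. If anything requires care, it is checking that the $\mathfrak{S}_{\underline{k}}$-action really acts trivially on $\Q=R/R_+$, but this is immediate from the grading-preservation of the twisted action on $R$.
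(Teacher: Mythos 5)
Your proof is correct and follows essentially the same route as the paper: both deduce the statement from Theorem~\ref{thm:mainThmTrivialRepn} via equivariant formality (\cite[Proposition 1]{Brion2000}), using that the surjection $\pi\colon H_T^\bullet(X)\to H^\bullet(X)$ kills the positive-degree part of $R$ and sends an $R$-basis of the free module $H_T^\bullet(X)$ to a $\C$-basis of $H^\bullet(X)$. The only cosmetic difference is that the paper pushes down the KT-basis $\{p^L\}$ itself, whose images are invariant because the correction terms $c_N^{L,\sigma}$ of Lemma~\ref{lem:SkActionKTClass} have strictly positive degree and hence vanish under the augmentation, whereas you push down the already-invariant basis $\{h^L\}$; both arguments rest on the same ingredients and yield the same conclusion.
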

\begin{proof}
By equivariant formality (see \cite[Proposition 1]{Brion2000}), we have a surjective homomorphism  
\[
H^\bullet_T(X)\stackrel{\pi}{\rightarrow} H^\bullet(X)
\]
which send the KT-basis $\{p^L\}_{L\in X^T}$ to the $\mathbb{C}$-basis $\{\pi(p^L)\}_{L\in X^T}$ and, for any collection of $(a_L)\in\bigoplus_{L\in X^T} R$,
\[
\pi\left(\sum_{L\in X^T} a_L p^L\right)=\sum_{L\in X^T}\overline{a_L} \ \pi(p^L),\]
 where $\overline{a_L}\in\mathbb{C}$ is the image of the surjective map $R\rightarrow \mathbb{C}$ whose kernel is generated by the homogeneous elements of strictly positive degree. 
\end{proof}
\subsection{Permutation Action on certain Cell Closures}\label{subsec:perm-action-cell-closures}
In this section we extend our previous results on the $\mathfrak{S}_{\underline{k}}$-action, to cell closures which are union of smaller cells.
\begin{rem}
If $\mathcal{G}=\mathcal{G}(X,T,\chi)$ is not Palais-Smale, there are counter examples for the property that every cell closure is a union of cells (see \cite[Example~7.6]{LaPu2020} and \cite[Example~4]{CFFFR2017}). At the moment, it is unclear to us if a Palais-Smale orientation of $\mathcal{G}$ is sufficient to deduce this property.
\end{rem}

\begin{lem}
Let $M \in \mr{rep}_\C(\Delta_n)$ be homogeneous and nilpotent. Let $L\in X^T$ and assume that $\overline{W_L}=\bigcup_{N\preceq L} W_N$. Then also $(\overline{W_L}, T)$ is a BB-filterable GKM-variety. Moreover, if $\mathcal{G}=\mathcal{G}(X,T,\chi)$ is Palais-Smale, the same holds for its full subgraph $\mathcal{G}_{\preceq L}$ whose  vertex set is $\{N\mid N\preceq L\}$.
\end{lem}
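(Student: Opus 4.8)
The plan is to verify the defining conditions of a BB-filterable GKM-variety (Definition~\ref{def:BB-filterable} together with the skeletal and even-cohomology requirements) for the closed, $T$-stable subvariety $Y:=\overline{W_L}$, and then to identify its moment graph with $\mathcal{G}_{\preceq L}$ so as to transfer the Palais--Smale property. First I would record that $Y$ is $T$-invariant: since $\chi$ is a cocharacter of $T$, the torus commutes with $\chi(\C^*)$, hence it preserves each attracting set $W_N$ of \eqref{eqn:BBdecomposition} and their closures; in particular $Y$ is a projective $T$-variety (and a variety, being the closure of the irreducible cell $W_L$). Its fixed locus is $Y^T=Y\cap X^T=\{N\mid N\preceq L\}$, which is finite, giving (BB1). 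For (BB2) I take $\chi|_Y$: genericity is inherited because $Y^{\chi(\C^*)}\subseteq X^{\chi(\C^*)}=X^T$ forces $Y^{\chi(\C^*)}=Y^T$.

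Next I would identify the BB-cells of $Y$. The attracting set of $N\in Y^T$ computed inside $Y$ is $W_N\cap Y=W_N$ for $N\preceq L$, so the BB-cells of $Y$ are exactly $\{W_N\mid N\preceq L\}$. These are the same varieties as the cells of $X$, hence rational cells by Theorem~\ref{trm:attr-forests-are-GKM}, since rational smoothness in Definition~\ref{def:rational-cell} is intrinsic to the cell. The decomposition is filtrable: intersecting the closed sets $\bigcup_{N\le N_0}W_N$ supplied by (TO1) of Proposition~\ref{propn:GoodTotalOrder} with $Y$ yields a filtration of $Y$ by closed subsets. This establishes (BB2). The GKM conditions then follow: the $T$-action on $Y$ is skeletal, as the fixed points are finite and every one-dimensional $T$-orbit in $Y$ is one of the finitely many in $X$; and the vanishing of odd rational cohomology follows from the filtrable rational-cell decomposition just produced, via the long exact sequences of the pairs in the filtration, each successive quotient contributing only in even degree by Definition~\ref{def:rational-cell}, exactly as used in \cite{LaPu2020} for $X$. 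Hence $(Y,T)$ is a BB-filterable GKM-variety.

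For the second assertion I would show that $\mathcal{G}(Y,T,\chi|_Y)$ is precisely the full subgraph $\mathcal{G}_{\preceq L}$. With the orientation convention of Definition~\ref{def:moment-graph}, a one-dimensional orbit $E$ with $\overline{E}=E\cup\{x_E,y_E\}$ oriented $x_E\to y_E$ satisfies $E\subseteq W_{x_E}$; hence $E\subseteq Y$ if and only if $x_E\preceq L$, and in that case $y_E\prec x_E\preceq L$ shows both endpoints lie in $\{N\preceq L\}$. Thus the orbits of $Y$ are exactly the edges of $\mathcal{G}$ with both endpoints $\preceq L$, that is, the edges of $\mathcal{G}_{\preceq L}$, carrying the same labels. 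The key observation for the Palais--Smale transfer is that the outgoing-edge count is unchanged: if $x\preceq L$ and $x\to y\in\mathcal{G}_1$, then $y\prec x\preceq L$, so every edge leaving $x$ in $\mathcal{G}$ already lies in $\mathcal{G}_{\preceq L}$, whence $\#(\mathcal{G}_{\preceq L})_1^{\partial x}=\#\mathcal{G}_1^{\partial x}$ for all $x\preceq L$. The inequality \eqref{eqn:PalaisSmale} for $\mathcal{G}_{\preceq L}$ then follows verbatim from that for $\mathcal{G}$.

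I expect the main obstacle to be the identification $\mathcal{G}(Y,T,\chi|_Y)=\mathcal{G}_{\preceq L}$, since it is precisely here that the orientation convention, the order $\preceq$, and the decomposition $Y=\bigcup_{N\preceq L}W_N$ must be made to interact correctly (in particular the fact that an orbit lying in $Y$ is governed by its \emph{source}, while both endpoints are then forced to be $\preceq L$). Once this identification is secured, the rational-cell bookkeeping for the first assertion and the Palais--Smale inheritance for the second are routine.
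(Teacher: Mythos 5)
Your proof is correct and follows essentially the same route as the paper: the first assertion is obtained by observing that $\overline{W_L}$ is a closed $T$-stable subvariety stratified by the (rational) cells $W_N$, $N\preceq L$, and the Palais--Smale transfer rests on exactly the paper's key point, namely that any edge $N\to N'\in\mathcal{G}_1$ with $N\preceq L$ has $N'\preceq N\preceq L$, so $\mathcal{G}_1^{\partial N}=(\mathcal{G}_{\preceq L})_1^{\partial N}$ and the inequality \eqref{eqn:PalaisSmale} is inherited verbatim. The only difference is that you spell out in detail (fixed locus, genericity, skeletal action, odd-cohomology vanishing, and the identification of $\mathcal{G}(\overline{W_L},T,\chi)$ with $\mathcal{G}_{\preceq L}$) what the paper dismisses as obvious, which is harmless and in fact makes explicit an identification the paper uses later.
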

\begin{proof}
The first statement is obvious, as $\overline{W_L}$ is a closed $T$-stable subvariety of a GKM-variety with a stratification by affine spaces. The Palais-Smale property follows from the fact that for any $N\preceq L$ if $N'\in X^T$ is such that $N\rightarrow N'\in \mathcal{G}_1$, then $N'\preceq N\preceq L$. Thus, $\mathcal{G}_1^{\partial N}\subseteq (\mathcal{G}_{\preceq L})_1$ for any $N\preceq L$. We conclude that if $N\rightarrow N'\in(\mathcal{G}_{\preceq L})_1$ then $\sharp \mathcal{G}_1^{\partial N}> \sharp \mathcal{G}_1^{\partial N'}$ if $\mathcal{G}$ is itself Palais-Smale.
\end{proof}

From the above lemma it follows that we can apply GKM-theory also to $\overline{W_L}$. We can hence consider the following homomorphism of graded $R$-modules, called for obvious reasons the restriction map:
\[
\iota^*:H_T^\bullet(X)\rightarrow H_T^\bullet(\overline{W_L}), \quad (f_N)_{N\in X^T}\mapsto (f_N)_{\substack{N\in X^T:\\N\preceq L}}
\] 

The following is the key ingredient to carry over our results on $X$ into cell closures $\overline{W_L}$ which are union of smaller cells, and it extends \cite[Lemma 3.3]{Tymoczko2008}.
\begin{lem}\label{lem:RestrictionKTClasses}Let $M \in \mr{rep}_\C(\Delta_n)$ be homogeneous and nilpotent. Let $X=\mr{Gr}_\mb{e}(M)$ and let $L\in X^T$ be a fixed point and assume that $\overline{W_L}=\bigcup_{N\preceq L} W_N$. Let  $\mathcal{G}=\mathcal{G}(X,T,\chi)$.  Then $p^{N,(L)}:=\iota^*(p^N)$ is the unique KT-class for $N$ in $H_T^\bullet(\overline{W_L})$.
\end{lem}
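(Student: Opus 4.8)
The plan is to verify directly that $\iota^*(p^N)$ satisfies the three defining conditions of a KT-class on $\overline{W_L}$ relative to its moment graph $\mathcal{G}_{\preceq L}$, and then to invoke uniqueness. Throughout I assume $N\preceq L$, so that $N$ is a vertex of $\mathcal{G}_{\preceq L}$; this is exactly what is needed for the statement to be meaningful, since otherwise $N$ is not a fixed point of $\overline{W_L}$.

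The first step is to isolate the key combinatorial fact already contained in the proof of the preceding lemma: for any $N\preceq L$, every edge $N\to N'$ of $\mathcal{G}$ satisfies $N'\preceq N\preceq L$, so the outgoing star of $N$ is unchanged upon passing to the subgraph, that is $(\mathcal{G}_{\preceq L})_1^{\partial N}=\mathcal{G}_1^{\partial N}$. The same observation shows that any directed path in $\mathcal{G}$ issuing from a vertex $\preceq L$ never leaves $\{N'\mid N'\preceq L\}$; hence for $N',N\preceq L$ one has $N'\succeq_\chi N$ in $\mathcal{G}_{\preceq L}$ if and only if $N'\succeq_\chi N$ in $\mathcal{G}$. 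These two identifications are the heart of the argument: they guarantee that all the data entering the definition of a KT-class is intrinsic to the region below $L$.

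With these in hand the verification is routine. Since the edge set of $\mathcal{G}_{\preceq L}$ is a subset of that of $\mathcal{G}$, the restriction $\iota^*(p^N)=(p^N_{N'})_{N'\preceq L}$ automatically satisfies the relations of Theorem~\ref{thm:GKM} for $\mathcal{G}_{\preceq L}$, so it indeed lies in $H_T^\bullet(\overline{W_L})$. Property (KT1) follows from $p^{N,(L)}_N=p^N_N=\prod_{E\in\mathcal{G}_1^{\partial N}}\alpha_E=\prod_{E\in(\mathcal{G}_{\preceq L})_1^{\partial N}}\alpha_E$, using the equality of outgoing stars. Property (KT2) is immediate because each entry $p^N_{N'}$ is homogeneous of degree $\deg p^N_N$, and this common degree is unchanged since $p^{N,(L)}_N=p^N_N$. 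Property (KT3) follows because for $N'\preceq L$ with $N'\not\succeq_\chi N$ (equivalently $N'\not\succeq_\chi N$ in $\mathcal{G}_{\preceq L}$, by the order identification above) we have $p^{N,(L)}_{N'}=p^N_{N'}=0$ by (KT3) for $p^N$ on $X$.

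Finally, for uniqueness I would use that $(\overline{W_L},T)$ is a BB-filterable GKM-variety whose moment graph $\mathcal{G}_{\preceq L}$ is Palais-Smale, both established in the preceding lemma. Lemma~\ref{lma:K-T-classes-unique} then applies and forces any KT-class for $N$ in $H_T^\bullet(\overline{W_L})$ to agree entrywise with $p^{N,(L)}$, proving it is \emph{the} KT-class. I expect no serious obstacle here: the only point requiring genuine care is the identification of the outgoing stars and of the induced order between $\mathcal{G}$ and the subgraph $\mathcal{G}_{\preceq L}$ — once that is in place, everything else is a direct transfer of the defining properties of $p^N$ through the restriction map.
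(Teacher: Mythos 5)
Your proposal is correct and follows essentially the same route as the paper: verify (KT1) via the fact that the outgoing star $\mathcal{G}_1^{\partial N}$ is entirely contained in the full subgraph $\mathcal{G}_{\preceq L}$, (KT2) via degree preservation under restriction, (KT3) via the identification of the partial order on $\overline{W_L}^T$ with the one induced from $X^T$, and then conclude uniqueness from the Palais-Smale property of $\mathcal{G}_{\preceq L}$ together with Lemma~\ref{lma:K-T-classes-unique}. Your write-up only differs in making explicit some points the paper leaves implicit (the equality of outgoing stars, the two-way order identification, and the appeal to the uniqueness lemma), which is a faithful elaboration rather than a different argument.
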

\begin{proof}We have to verify that $p^{N,(L)}$ satisfies (KT1),(KT2) and (KT3).

Firstly, (KT1) holds obviously, as $\mathcal{G}^{\partial N}_1$ is entirely contained in $\mathcal{G}_{\preceq L}=\mathcal{G}(\overline{W_L},T,\chi)$ and hence 
\[
p^{N,(L)}_N=\iota^*(p^N)=p^N_N= \prod_{\substack{ E\in \mathcal{G}_1^{\partial N}}} \alpha_E =\prod_{\substack{ E\in (\mathcal{G}_{\preceq L})_1^{\partial N}}} \alpha_E.\]

Secondly,  (KT2) follows immediately from the fact that the restriction map preserves the degree.

To conclude, it is left to check (K3). Let $N'\in (\mathcal{G}_{\preceq L})_0 = \overline{W_L}^T$ be such that $N'\not\succeq N$. Since $\mathcal{G}_{\preceq L}$ is a full subgraph of $\mathcal{G}$, the partial order on $\overline{W_L}^T$ is induced by the partial order on $X^T$, and hence if $N'\not\succeq N$ in $\overline{W_L}^T$, also $N'\not\succeq N$ in $X^T$. Thus, $p^N_{N'}=\iota^*(p^N)_{N'}=p^N_{N'}=0$, as desired.
\end{proof}

\begin{thm}Let $M \in \mr{rep}_\C(\Delta_n)$ be homogeneous and nilpotent. Let $X=\mr{Gr}_\mb{e}(M)$ and  $\mathcal{G}=\mathcal{G}(X,T,\chi)$. Let $L\in X^T$ be a fixed point and assume that $\overline{W_L}=\bigcup_{N\preceq L} W_N$. Then
\begin{enumerate}
\item there is an $\mathfrak{S}_{\underline{k}}$-action on $H_T^\bullet(\overline{W_L})$ given by
\begin{equation*}
\sigma_i\cdot p^{N,(L)}=
\left\{
\begin{array}{ll}
p^{N,(L)}+(\epsilon_i-\epsilon_{i+1})p^{\sigma_i N,(L)}&\hbox{if }\sigma_i N\prec N,\\
p^{N,(L)}&\hbox{otherwise,}
\end{array}
\right. \qquad (N\preceq L)
\end{equation*} 
for any $i\in[d]\setminus\underline{k'}$.
\item the $\mathfrak{S}_{\underline{k}}$-representation $H_T^\bullet(\overline{W_L})$ is isomorphic to $\bigoplus_{N\in \overline{W_L}^T}{\rm Triv}^{\#\mathcal{G}_1^{\partial N}}$ as a graded twisted $R$-module,
\item \(H_T^\bullet(\overline{W_L})\cong\bigoplus_{N\in \overline{W_L}^T}{\rm Triv}^{\#\mathcal{G}_1^{\partial N}}\otimes P\)
as a graded $\mathfrak{S}_{\underline{k}}$-module,
\item \(H^\bullet(\overline{W_L})\cong\bigoplus_{N\in \overline{W_L}}{\rm Triv}^{\#\mathcal{G}_1^{\partial N}}\) as a graded $\mathfrak{S}_{\underline{k}}$-module.
\end{enumerate}
\end{thm}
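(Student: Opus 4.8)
The plan is to exploit the fact, established in the two lemmas preceding the statement, that $(\overline{W_L},T)$ is itself a BB-filterable Palais--Smale GKM-variety whose moment graph is the full subgraph $\mathcal{G}_{\preceq L}$, together with Lemma~\ref{lem:RestrictionKTClasses}, which identifies its unique KT-basis as $\{p^{N,(L)}=\iota^*(p^N)\}_{N\preceq L}$. The main subtlety to keep in mind throughout is that, unlike the situation for $X$ in Proposition~\ref{propn:GeometricPermutation-Action}, the group $\mathfrak{S}_{\underline{k}}$ does \emph{not} act geometrically on $\overline{W_L}$: a reflection need not fix $L$ nor preserve $\overline{W_L}$, so the localised formula $\sigma\cdot(f_N)=(\sigma(f_{\sigma^{-1}N}))$ is not available here, as it would reference fixed points outside $\overline{W_L}^T$. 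Hence the action in (1) must be \emph{defined} by the displayed formula on the KT-basis and extended $R$-semilinearly, and the real content of (1) is that this formula does define a genuine $\mathfrak{S}_{\underline{k}}$-action.

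To prove well-definedness I would not argue intrinsically but instead realise $H_T^\bullet(\overline{W_L})$ as a genuine $\mathfrak{S}_{\underline{k}}$-submodule of $H_T^\bullet(X)$. Set $U:=\operatorname{span}_R\{p^N\mid N\preceq L\}\subseteq H_T^\bullet(X)$. First I would check that $U$ is stable under the honest $\mathfrak{S}_{\underline{k}}$-action on $X$: by Proposition~\ref{prop:ActionSimpleReflnKTClass}, for $N\preceq L$ one has $\sigma_i\cdot p^N=p^N$ or $\sigma_i\cdot p^N=p^N+(\epsilon_i-\epsilon_{i+1})p^{\sigma_i N}$, and in the second case $\sigma_iN\prec N\preceq L$ forces $\sigma_iN\preceq L$, so $p^{\sigma_iN}\in U$; as each $\sigma_i$ is an involution on $H_T^\bullet(X)$, stability under the generators yields stability under all of $\mathfrak{S}_{\underline{k}}$. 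Next, using (KT3) exactly as in the proof of Lemma~\ref{lem:RestrictionKTClasses}, one sees that $\iota^*(p^N)=0$ for $N\not\preceq L$ while $\iota^*(p^N)=p^{N,(L)}$ for $N\preceq L$; hence $\iota^*$ restricts to an $R$-linear isomorphism $U\xrightarrow{\ \sim\ }H_T^\bullet(\overline{W_L})$ carrying the $R$-basis $\{p^N\}_{N\preceq L}$ to $\{p^{N,(L)}\}_{N\preceq L}$. Comparing Proposition~\ref{prop:ActionSimpleReflnKTClass} with the formula in (1) case by case then shows that this isomorphism intertwines the restricted action on $U$ (which is $R$-semilinear with the given twist) with the formula-defined operators on $H_T^\bullet(\overline{W_L})$; since the source carries a genuine $\mathfrak{S}_{\underline{k}}$-action, so does the target, proving (1).

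With (1) in hand, statements (2)--(4) follow by transporting the arguments of Section~\ref{sec:perm-action-KT-classes} through this isomorphism. For (2) I would simply restrict the invariant basis: the elements $h^N=\tfrac{1}{k_1!\cdots k_{d_0}!}\sum_{\sigma}\sigma\cdot p^N$ from the proof of Theorem~\ref{thm:mainThmTrivialRepn} satisfy $h^N=p^N+\sum_{N'\prec N}d^N_{N'}p^{N'}$; for $N\preceq L$ every index occurring lies below $L$, so $h^N\in U$, and $\iota^*(h^N)=p^{N,(L)}+\sum_{N'\prec N}d^N_{N'}p^{N',(L)}$ is an $\mathfrak{S}_{\underline{k}}$-invariant element of $H_T^\bullet(\overline{W_L})$. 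Unitriangularity with respect to any total order refining $\prec$ shows $\{\iota^*(h^N)\}_{N\preceq L}$ is an $R$-basis of invariants, which gives the twisted-trivial decomposition; here I would record that, because $\mathcal{G}_1^{\partial N}\subseteq(\mathcal{G}_{\preceq L})_1$ for $N\preceq L$, the degrees $\#\mathcal{G}_1^{\partial N}$ are the same whether computed in $\mathcal{G}$ or in $\mathcal{G}_{\preceq L}$. Finally, (3) is obtained exactly as the corollary to Theorem~\ref{thm:mainThmTrivialRepn} by tensoring with the twisting representation $P$, and (4) follows as in Corollary~\ref{cor:Cor2mainThm} via equivariant formality of the BB-filterable GKM-variety $\overline{W_L}$ (\cite[Proposition~1]{Brion2000}), by applying the projection $\pi:H_T^\bullet(\overline{W_L})\to H^\bullet(\overline{W_L})$ to the invariant basis $\{\iota^*(h^N)\}_{N\preceq L}$.

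I expect the only genuine obstacle to be the well-definedness in (1), precisely because there is no geometric action to fall back on; the submodule identification $U\cong H_T^\bullet(\overline{W_L})$ is what circumvents re-deriving the Coxeter relations by hand, and everything else is a formal transcription of the statements already proven for $X$, legitimised by the fact that $\overline{W_L}$ is a Palais--Smale GKM-variety with a unique KT-basis.
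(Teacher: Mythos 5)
Your proposal is correct and follows essentially the same route as the paper: the paper also defines the action by $\sigma(\sum_{N\preceq L}a_N\,\iota^*(p^N))=\sum_{N\preceq L}\sigma(a_N)\,\iota^*(\sigma\cdot p^N)$, i.e.\ by transporting the honest action on the $\mathfrak{S}_{\underline{k}}$-stable span of $\{p^N\}_{N\preceq L}$ through the isomorphism given by $\iota^*$ (well-definedness via Lemma~\ref{lem:SkActionKTClass}, which is the length-induction packaging of your generator-by-generator argument from Proposition~\ref{prop:ActionSimpleReflnKTClass}), and then obtains (2)--(4) exactly as you do, by restricting the averaged invariant basis $h^N$ and invoking equivariant formality of $\overline{W_L}$.
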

\begin{proof}
\begin{enumerate}
\item 
 Let $\sigma\in\mathfrak{S}_{\underline{k}}$. By Lemma \ref{lem:RestrictionKTClasses}, any element of $H_T^*(X)$ can be written uniquely as $\sum_{N\preceq L}c^L_N \iota^*(p^N)$ for some $a^L_N\in R$. We set
\[
\sigma\left(\sum_{N\preceq L}a^L_N \iota^*(p^N)\right)=\sum_{N\preceq L }\sigma(a^L_N) \iota^*(\sigma\cdot p^N).
\]
This is a well-defined action thanks to Lemma \ref{lem:SkActionKTClass}. Moreover, such an action is nothing but the restriction of the $\mathfrak{S}_{\underline{k}}$-action. Hence, the claimed formula follows from Proposition~\ref{prop:ActionSimpleReflnKTClass}
\item By Lemma~\ref{lem:SkActionKTClass}, we see that we can define $h_N$, for $N\preceq L$, as in Theorem \ref{thm:mainThmTrivialRepn}. By the proof of Theorem~\ref{thm:mainThmTrivialRepn}, every $h_N$ is $\mathfrak{S}_{\underline{k}}$-invariant and the set $\{h_N\}_{N\preceq L}$ is an $R$-basis of $H_T^\bullet(\overline{W_L})$. This implies the claim.
\item The claim follows immediately from the previous point.
\item Since $\overline{W_L}$ is equivariantly formal (see \cite[Proposition 1]{Brion2000}), there is a surjective map
\[
\pi:H_T^\bullet(\overline{W_L})\rightarrow H^\bullet(\overline{W_L}),
\]
the set $\{\pi(p^N)\}_{N\preceq L}$ is a $\mathbb{C}$-basis and the same proof of Corollary \ref{cor:Cor2mainThm} goes through.
\end{enumerate}\end{proof}
\begin{rem}If $X$ is a flag variety, then every cell closure is a Schubert variety and is a union of (Schubert) cells. The above theorem generalises the main results of \cite{Tymoczko2008}.
\end{rem}
\begin{rem}\label{rem:permutation-action-general-setting}
The proofs in Section~\ref{sec:Permutation-Action} are based on the structure of the moment graph as described in Theorem~\ref{trm:comb-moment-graph} and the fact that every quiver Grassmannian for a homogeneous nilpotent $\Delta_n$-representation is Palais-Smale (see Proposition~\ref{prop:homg-forest-are-P-S}). We expect that once found an appropriate $T$-action, a permutation representation on more general quiver Grassmannians can be defined and investigated in an analogous way.
\end{rem}
\section{Divided Difference Operators}\label{sec:DividedDifferenceOps}
Divided difference operators have a long history, starting with Newton's work, as explained in \cite{lascoux}. In the setting of Schubert calculus they were firstly applied by Bernstein-Gel'fand-Gel'fand \cite{bgg} and Demazure \cite{demazure} about fifty years ago. In this section we generalise Tymoczko's definition of (left) divided different operators on the equivariant cohomology of the flag variety (cf.\cite[\S3.4]{Tymoczko2008}) to our situation. Right divided difference operators were generalised in \cite{laninizainoulline} from a completely different perspective, by using the formalism of moment graph fibrations and structure algebras, which seems not to be applicable in general to the moment graph from Theorem~\ref{trm:comb-moment-graph}. 

For $i\in[d]\setminus\underline{k'}$, we denote by $\partial_i:R\rightarrow R$ the (lowering degree) $\mathbb{C}$-linear operator given by $\partial_i(a)=\frac{a-\sigma_i(a)}{(\epsilon_{i+1}-\epsilon_{i})}$. The following lemma is the key to define divided difference operators.
\begin{lma}
Let $M \in \mr{rep}_\C(\Delta_n)$ be a nilpotent representation. If $i\in[d]\setminus\underline{k'}$ and $f \in H^\bullet_T(X)$, then
\[ \frac{f-\sigma_i\cdot f}{\epsilon_{i+1}-\epsilon_{i}} \in H^\bullet_T(X). \]
\end{lma}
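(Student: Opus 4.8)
The plan is to verify directly that the tuple
\[
g:=\partial_i f=\frac{f-\sigma_i\cdot f}{\epsilon_{i+1}-\epsilon_i}
\]
lies in $H^\bullet_T(X)$ by checking the two GKM conditions of Theorem~\ref{thm:GKM}. Write $\alpha:=\epsilon_{i+1}-\epsilon_i$ and record the elementary facts $\sigma_i(\alpha)=-\alpha$, that $\partial_i$ maps $R$ to $R$, and that $\sigma_i\cdot f\in H^\bullet_T(X)$ by Proposition~\ref{propn:GeometricPermutation-Action}. By that same proposition the $L$-component of $g$ is $g_L=\big(f_L-\sigma_i(f_{\sigma_i L})\big)/\alpha$, so everything reduces to combinatorics on the moment graph $\mathcal{G}=\mathcal{G}(X,T,\chi)$.

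First I would establish integrality, that is $g_L\in R$ for every $L\in X^T$. I decompose the numerator as $f_L-\sigma_i(f_{\sigma_i L})=(f_L-f_{\sigma_i L})+(f_{\sigma_i L}-\sigma_i(f_{\sigma_i L}))$. The second summand equals $\alpha\,\partial_i(f_{\sigma_i L})$ and hence lies in $\alpha R$. If $\sigma_i L=L$ the first summand vanishes; otherwise I use the key combinatorial input that $L$ and $\sigma_i L$ are joined by an edge of $\mathcal{G}$ carrying label $\pm\alpha$. This is the mutation-relation part of (the proof of) Lemma~\ref{lem:reflectionEdge}, which only uses $U_i\cong U_{i+1}$, together with the character formula \eqref{eqn:Characters-in-moment-graph} in the case $k=k'$. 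The GKM relation for $f$ then puts the first summand in $\alpha R$ as well, so $\alpha\mid\big(f_L-\sigma_i(f_{\sigma_i L})\big)$ and $g_L\in R$.

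Next I would verify the edge relations. Fix an edge $E\colon L\stackrel{\beta}{\to}L'$ of $\mathcal{G}$. By \eqref{eqn:SkActionGraph} the automorphism $\sigma_i$ carries $E$ to an edge between $\sigma_i L$ and $\sigma_i L'$ labelled $\sigma_i(\beta)$, whence $f_{\sigma_i L}-f_{\sigma_i L'}\in\sigma_i(\beta)R$ and the numerator $\alpha(g_L-g_{L'})=(f_L-f_{L'})-\sigma_i(f_{\sigma_i L}-f_{\sigma_i L'})$ lies in $\beta R$; since $g_L,g_{L'}\in R$ it also lies in $\alpha R$. If $\beta$ and $\alpha$ are not proportional they are coprime linear forms, so $\alpha R\cap\beta R=\alpha\beta R$ and dividing by $\alpha$ yields $g_L-g_{L'}\in\beta R$. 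If instead $\beta=\pm\alpha$, I invoke the converse combinatorial fact that such an edge is necessarily a pair $\{L,\sigma_i L\}$, i.e. $L'=\sigma_i L$; writing $f_L-f_{L'}=\alpha w$, a short computation gives $(f_L+\sigma_i f_L)-(f_{L'}+\sigma_i f_{L'})=\alpha(w-\sigma_i w)=\alpha^2\,\partial_i(w)$, so $g_L-g_{L'}=\alpha\,\partial_i(w)\in\alpha R=\beta R$.

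The main obstacle is the combinatorial statement used in both steps: that the edges of $\mathcal{G}$ carrying label $\pm\alpha$ are precisely the pairs $\{L,\sigma_i L\}$ with $\sigma_i L\neq L$. One direction is the mutation part of Lemma~\ref{lem:reflectionEdge}. For the converse I would read off from Theorem~\ref{trm:comb-moment-graph} and \eqref{eqn:Characters-in-moment-graph} that a $\pm\alpha$-labelled edge arises from a mutation between the isomorphic summands $U_i,U_{i+1}$ with equal terminal height ($k=k'$), and then argue that the requirement that \emph{both} endpoints be successor closed forces the moved string to be exactly the length difference of the two segments, sitting directly below the shorter one, so that the mutation is the full interchange realised by $\sigma_i$. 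Once this is in place, the remainder is the formal divided-difference bookkeeping above, which is valid for any nilpotent $M$.
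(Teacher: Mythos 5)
Your proof is correct and follows essentially the same route as the paper's: coordinatewise integrality via the edge joining $L$ to $\sigma_i L$ coming from Lemma~\ref{lem:reflectionEdge}, followed by the GKM edge relations through the same case split (label coprime to $\epsilon_{i+1}-\epsilon_i$ handled by the coprimality/divisibility argument, and the case $L'=\sigma_i L$ by the same direct computation, just parametrised by $f_L-f_{L'}=\alpha w$ instead of $f_{L'}=f_L-\alpha g$). If anything you are more careful than the paper on two points it leaves implicit: that only the mutation-relation part of Lemma~\ref{lem:reflectionEdge} is needed (so the homogeneity and out-degree hypotheses of that lemma are not an issue for general nilpotent $M$), and that edges labelled $\pm(\epsilon_{i+1}-\epsilon_i)$ are exactly the pairs $\{L,\sigma_i L\}$, which is what actually justifies the coprimality step.
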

\begin{proof}
If $i$ is as above, then $\sigma_i\in\mathfrak{S}_{\underline{k}}$. Let $L\in X^T$ be such that $\sigma_i L\neq L$, then by Lemma~\ref{lem:reflectionEdge}, we have $\sigma_i L\stackrel{\epsilon_{i+1}-\epsilon_i}{-\!\!\!\!-\!\!\!\!-\!\!\!\!-\!\!\!\!-} L\in\overline{\mathcal{G}(X,T)}_1$. Therefore, if $f=(f_L)_{L\in X^T}\in H^\bullet_T(X)$, then
$f_L-f_{\sigma_i L}\in(\epsilon_i-\epsilon_{i+1})R$ and also $f_{\sigma_i L}-\sigma_i(f_{\sigma_iL})\in (\epsilon_{i+1}-\epsilon_{i})R$. We deduce that $f_L-\sigma_i(f_{\sigma_i L})\in(\epsilon_{i+1}-\epsilon_{i})R$, that is $\frac{f_L-\sigma_i(f_{\sigma_i L})}{\epsilon_{i+1}-\epsilon_{i}}\in R$. Therefore, $q:=\frac{f-\sigma_i\cdot f}{\epsilon_{i+1}-\epsilon_{i}}\in\bigoplus_{L\in X^T} R$. Moreover for any edge $E:L\stackrel{\alpha}{\rightarrow} L'\in \overline{\mathcal{G}(X,T)}_1$ also $F:\sigma_i L\stackrel{\sigma_i(\alpha)}{-\!\!\!\!-\!\!\!\-\!\!\!\!-\!\!\!\!-}L'\in \overline{\mathcal{G}(X,T)}_1$ and  we have
\[
q_L-q_{L'}=\frac{f_L-\sigma_i( f_{\sigma_iL})}{\epsilon_{i+1}-\epsilon_{i}}-\frac{f_{L'}-\sigma_i\cdot f_{\sigma_iL'}}{\epsilon_{i+1}-\epsilon_{i}}=\frac{\overbrace{(f_L-f_{L'})}^{\in\alpha R}-\overbrace{\sigma_i( f_{\sigma_iL}- f_{\sigma_iL'})}^{\in\alpha R}}{\epsilon_{i+1}-\epsilon_{i}}.
\]
If $L'\neq\sigma_i L$, then 
 $\alpha$ and $\epsilon_{i+1}-\epsilon_i$ are coprime and by the previous computation, being $q_L-q_{L'}\in R$, we deduce that $\alpha(\epsilon_{i+1}-\epsilon_i)\mid q_L-q_{L'}$, and hence $q_L-q_{L'}\in\alpha R$.
 
Otherwise, $L'=\sigma_i L$, then $\alpha=\epsilon_{i+1}-\epsilon_i$ and we set $f_{L'}=f_L-\alpha g$. Thus,
\[
q_L-q_{L'}=\frac{\alpha g-\sigma_i(-\alpha g)}{\alpha}=\frac{\alpha(g-\sigma_i(g))}{\alpha}=g-\sigma_i(g)\in\alpha R.
\]
\end{proof}
\begin{defn}Let $\sigma_i\in\mathfrak{S}_{\underline{k}}$ be a simple reflection. The corresponding divided difference operator on $H^\bullet_T(X)$ is defined as
\[
D_i(f)=\frac{f-\sigma_i\cdot f}{\epsilon_{i+1}-\epsilon_{i}}, \qquad \big(f\in H^\bullet_T(X)\big).
\]
\end{defn}
Notice that to define the above operator we do not need to assume that $\mathcal{G}(X,T,\chi)$ admits a Palais-Smale orientation, or that a KT-basis exists. If $X$ has also this properties, then the divided difference operators are easier to control:
\begin{thm}\label{thm:DividedDiffOpsActionOnKTBasis} Let $M \in \mr{rep}_\C(\Delta_n)$ be homogeneous and nilpotent. Let $X=\mr{Gr}_\mb{e}(M)$ and  $\mathcal{G}=\mathcal{G}(X,T,\chi)$. Let $\{p^L\}_{L\in X^T}$ be the unique KT-basis. Let $i\in[d]\setminus\underline{k'}$ 
 and $f=\sum_{L\in X^T}a_L p^L\in H^\bullet_T(X)$, then
\[
D_i(f)=\sum_{L\in X^T}\partial_i(a_L) p^L+\sum_{\substack{L\in X^T:\\
\sigma_iL\prec L}}\sigma_i(a_L)p^{\sigma_i L}.
\]
\end{thm}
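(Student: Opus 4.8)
The plan is to substitute the KT-expansion $f=\sum_{L\in X^T}a_Lp^L$ directly into the definition $D_i(f)=\big(f-\sigma_i\cdot f\big)/(\epsilon_{i+1}-\epsilon_i)$ and to evaluate $\sigma_i\cdot f$ by means of Proposition~\ref{prop:ActionSimpleReflnKTClass}. The one feature that must be handled with care is that the $\mathfrak{S}_{\underline{k}}$-action on $H^\bullet_T(X)$ is not $R$-linear but $R$-\emph{semilinear}: from the localised description in Proposition~\ref{propn:GeometricPermutation-Action}, scaling the tuple $(g_L)$ by $a\in R$ and then applying $(g_L)\mapsto(\sigma(g_{\sigma^{-1}(L)}))$ produces $\sigma(a)$ times the image, so that $\sigma\cdot(a\,g)=\sigma(a)\,(\sigma\cdot g)$ for any $a\in R$ and $g\in H^\bullet_T(X)$. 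I would record this semilinearity as the first step.

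Applying it with $\sigma=\sigma_i$ gives $\sigma_i\cdot f=\sum_{L}\sigma_i(a_L)\,(\sigma_i\cdot p^L)$. I then split the index set according to whether $\sigma_i L\prec L$ or not, which is meaningful because $L$ and $\sigma_i L$ are always comparable (as noted in the proof of Proposition~\ref{prop:ActionSimpleReflnKTClass}). Feeding in the two cases of \eqref{eqn:sigma_i-action}, and using that for $\sigma_i L\not\prec L$ the class $p^L$ is fixed while only its coefficient is twisted, yields
\[
\sigma_i\cdot f=\sum_{L\in X^T}\sigma_i(a_L)\,p^L+(\epsilon_i-\epsilon_{i+1})\sum_{\substack{L:\ \sigma_iL\prec L}}\sigma_i(a_L)\,p^{\sigma_i L}.
\]

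Subtracting this from $f$ and dividing by $\epsilon_{i+1}-\epsilon_i$ produces two contributions. The diagonal terms assemble into $\sum_L\frac{a_L-\sigma_i(a_L)}{\epsilon_{i+1}-\epsilon_i}\,p^L=\sum_L\partial_i(a_L)\,p^L$ by the very definition of $\partial_i$. The off-diagonal terms carry a factor $-(\epsilon_i-\epsilon_{i+1})/(\epsilon_{i+1}-\epsilon_i)=1$, using $\epsilon_i-\epsilon_{i+1}=-(\epsilon_{i+1}-\epsilon_i)$, and hence reduce to exactly $\sum_{\sigma_iL\prec L}\sigma_i(a_L)\,p^{\sigma_i L}$, which is the asserted formula. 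I do not expect a genuine obstacle beyond correct bookkeeping of the semilinear twist and this sign: the mathematical content has already been extracted in Proposition~\ref{prop:ActionSimpleReflnKTClass}, and what remains is a formal manipulation in the KT-basis. The only subtlety worth double-checking is that the off-diagonal sum is unambiguous: since $\sigma_i$ is an involution, $L\mapsto\sigma_i L$ maps $\{L:\sigma_iL\prec L\}$ bijectively onto $\{N:\sigma_iN\succ N\}$, so the classes $p^{\sigma_i L}$ occurring there are pairwise distinct basis elements and no collision with the diagonal sum can occur.
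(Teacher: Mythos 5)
Your proposal is correct and follows essentially the same route as the paper's proof: expand $f$ in the KT-basis, use the semilinearity $\sigma_i\cdot(a_Lp^L)=\sigma_i(a_L)\,(\sigma_i\cdot p^L)$, apply Proposition~\ref{prop:ActionSimpleReflnKTClass} to evaluate $\sigma_i\cdot p^L$ in the two cases, and divide by $\epsilon_{i+1}-\epsilon_i$ to collect the $\partial_i(a_L)p^L$ and $\sigma_i(a_L)p^{\sigma_iL}$ terms. The paper compresses the semilinearity into the phrase ``by linearity,'' so your explicit treatment of the twist (and of the non-collision of the off-diagonal terms) is just a more careful write-up of the identical argument.
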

\begin{proof}
By linearity,
\[
D_i(f)=\sum_{L\in X^T}D_i(a_L p^L)=\frac{\sum_{L\in X^T}\big(a_L p^L-\sigma_i(a_L)\ \sigma_i\cdot p^L\big)}{\epsilon_{i+1}-\epsilon_{i}}\\\]
and we can apply Proposition~\ref{prop:ActionSimpleReflnKTClass} to expand $\sigma_i\cdot p^L$ and hence get
\begin{align*}
D_i(f)&=\frac{\sum_{L\in X^T}\left(a_L p^L-\sigma_i(a_L)\ p^L\right)-(\epsilon_i-\epsilon_{i+1})\sum_{\substack{L\in X^T:\\
\sigma_iL\prec L}}\sigma_i(a_L)p^{\sigma_i L}}{\epsilon_{i+1}-\epsilon_{i}}\\
&=\sum_{L\in X^T}\partial_i(a_L) p^L+\sum_{\substack{L\in X^T:\\
\sigma_iL\prec L}}\sigma_i(a_L)p^{\sigma_i L}.
\end{align*}
\end{proof}
The following corollary is an immediate consequence.
\begin{cor}\label{cor:DividedDiffpL}Let $M \in \mr{rep}_\C(\Delta_n)$ be homogeneous and nilpotent. Let $X=\mr{Gr}_\mb{e}(M)$ and  $\mathcal{G}=\mathcal{G}(X,T,\chi)$. Let $L\in X^T$ and let $p^L$ be the corresponding KT-class. Then, for any $i\in[d]\setminus\underline{k'}$,  
\[
D_i(p^L)=\left\{
\begin{array}{ll}
p^{\sigma_i L}     &\hbox{ if }\sigma_iL\prec L,  \\
    0 &\hbox{ otherwise}.
\end{array}\right.
\]
\end{cor}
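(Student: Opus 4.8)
The plan is to derive this as a direct specialization of Theorem~\ref{thm:DividedDiffOpsActionOnKTBasis}, applied to the single basis vector $f=p^L$. Concretely, I would write $p^L$ in the KT-basis with coefficients $a_L=1$ and $a_N=0$ for all $N\neq L$, and then substitute these into the general formula for $D_i$ from that theorem. The whole argument reduces to observing how $\partial_i$ and $\sigma_i$ treat the scalar coefficients, and then reading off the two resulting terms.

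First I would record that both operators act trivially on degree-zero elements: since $\sigma_i$ acts $R$-linearly and fixes the constants $\Q\subseteq R$, we have $\sigma_i(1)=1$, and hence $\partial_i(1)=\frac{1-\sigma_i(1)}{\epsilon_{i+1}-\epsilon_i}=0$, while trivially $\partial_i(0)=0$ and $\sigma_i(0)=0$. Plugging $a_L=1$, $a_N=0$ ($N\neq L$) into the first sum of Theorem~\ref{thm:DividedDiffOpsActionOnKTBasis} therefore kills it entirely: every summand $\partial_i(a_N)\,p^N$ vanishes, including the $N=L$ term because $\partial_i(1)=0$.

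It then remains to analyse the second sum $\sum_{\substack{N\in X^T:\ \sigma_i N\prec N}}\sigma_i(a_N)\,p^{\sigma_i N}$. For $N\neq L$ we have $\sigma_i(a_N)=\sigma_i(0)=0$, so the only possible contribution comes from $N=L$, and it is present precisely when the indexing condition $\sigma_i L\prec L$ holds; in that case the term equals $\sigma_i(1)\,p^{\sigma_i L}=p^{\sigma_i L}$. Assembling the two cases gives $D_i(p^L)=p^{\sigma_i L}$ when $\sigma_i L\prec L$ and $D_i(p^L)=0$ otherwise, which is the claim.

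I do not expect a genuine obstacle here: the statement is flagged as ``immediate'', and the entire content is the bookkeeping above together with the elementary fact that $\partial_i$ and $\sigma_i$ fix scalars. The only point warranting explicit mention — so that the vanishing of the first sum is not taken on faith — is precisely $\partial_i(1)=0$, which I would state once and reuse.
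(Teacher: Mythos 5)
Your proposal is correct and coincides with the paper's own argument: the paper states Corollary~\ref{cor:DividedDiffpL} as an immediate consequence of Theorem~\ref{thm:DividedDiffOpsActionOnKTBasis}, which is precisely your specialization $a_L=1$, $a_N=0$ for $N\neq L$, with the first sum killed by $\partial_i(1)=0$ and the second sum reduced to the single term $\sigma_i(1)\,p^{\sigma_i L}$ when $\sigma_i L\prec L$. Spelling out $\partial_i(1)=0$ is a reasonable (if minor) addition to what the paper leaves implicit.
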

\subsection{Nil Hecke Algebra Action}
In \cite{KostantKumar1986}, the nil Hecke Algebra was introduced for any Kac-Moody group, and investigated in relation to the cohomology of generalised flag varieties.

We identify the nil Hecke algebra ${}^0\mathcal{H}(\mathfrak{S}_{\underline{k}})$ of the Coxeter group $\mathfrak{S}_{\underline{k}}$ with the subalgebra of $\textrm{End}_{\mathbb{C}}(\mathbb{C}\left[\epsilon_i\mid i\in [d]\right])$ generated by the operators $\partial_i$ with $i\in[d]\setminus\underline{k'}$. The relations that they satisfy are 
\begin{align}\label{eqn:RelnsNH1}
\partial_i^2=0&\qquad i\in[d]\setminus\underline{k'}\\
\label{eqn:RelnsNH2}
\partial_i\partial_j=\partial_j\partial_i & \qquad i,j\in[d]\setminus\underline{k'}\hbox{ and }	[i-j|>1, \\
\label{eqn:RelnsNH3}
\partial_i\partial_{i+1}\partial_i=\partial_{i+1}\partial_i\partial_{i+1} & \qquad i\in[d]\setminus\underline{k'}.
\end{align}

The nil Hecke algebra is $\mathbb{Z}$-graded by declaring 
\[
\textrm{deg}(\partial_i)=-2 
.\]

Thus, as in the classical setting (cf. \cite[Proposition 2.7(c)]{KostantKumar1986}), we obtain an ${}^0\mathcal{H}(\mathfrak{S}_{\underline{k}})$-module structure on $H_T^\bullet(X)$:
\begin{thm}\label{thm:NHRingRepn}Let $M \in \mr{rep}_\C(\Delta_n)$ be a nilpotent representation. Let $X=\mr{Gr}_\mb{e}(M)$. There is a $\mathbb{C}$-linear homomorphism of $\mathbb{Z}$-graded rings
\[
\rho:{}^0\mathcal{H}(\mathfrak{S}_{\underline{k}})\rightarrow\textrm{End}_{\mathbb{C}}(H_T^\bullet(X)), \quad \partial_i\mapsto  D_i\ (i\in [d]\setminus\underline{k'}).
\]
\end{thm}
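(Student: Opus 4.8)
The plan is to exploit the presentation of the nil Hecke algebra. By definition, ${}^0\mathcal{H}(\mathfrak{S}_{\underline{k}})$ is generated by the operators $\partial_i$ ($i\in[d]\setminus\underline{k'}$) subject only to the relations \eqref{eqn:RelnsNH1}, \eqref{eqn:RelnsNH2}, \eqref{eqn:RelnsNH3}, and it is $\mathbb{Z}$-graded by $\mathrm{deg}(\partial_i)=-2$. Hence, to produce the desired $\C$-linear homomorphism of graded rings $\rho$ with $\rho(\partial_i)=D_i$, it suffices to check two things: that each $D_i$ is a well-defined $\C$-linear endomorphism of $H^\bullet_T(X)$ of degree $-2$, and that the $D_i$ satisfy the same three relations as the $\partial_i$. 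The universal property then extends $\rho$ uniquely and multiplicatively, and it is automatically graded because $\sigma_i$ preserves cohomological degree while $\epsilon_{i+1}-\epsilon_i$ is homogeneous of degree $2$, so $D_i$ lowers degree by $2$. The first point is precisely the content of the Lemma immediately preceding the definition of $D_i$, so the only real work is verifying the three relations.

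To treat the relations uniformly I would pass to a localisation. Write $\alpha_i:=\epsilon_{i+1}-\epsilon_i$, set $\Omega:=\bigoplus_{L\in X^T}R\supseteq H^\bullet_T(X)$ and $\Omega_{\mathrm{loc}}:=\bigoplus_{L\in X^T}\mathrm{Frac}(R)$. From the explicit localised formula in Proposition~\ref{propn:GeometricPermutation-Action}, the $\mathfrak{S}_{\underline{k}}$-action on $\Omega$ is $R$-semilinear, that is $\sigma\cdot(rf)=\sigma(r)\,(\sigma\cdot f)$ for $r\in R$; it therefore extends to $\Omega_{\mathrm{loc}}$, with each $\sigma_i$ an involution satisfying $\sigma_i(\alpha_i)=-\alpha_i$. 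On $\Omega_{\mathrm{loc}}$ the operator $\widetilde{D}_i:=\alpha_i^{-1}(1-\sigma_i)$ (apply $1-\sigma_i$, then multiply componentwise by $\alpha_i^{-1}$) is defined, and by construction it restricts to $D_i$ on $H^\bullet_T(X)$.

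The key reduction is then to recognise the $\widetilde{D}_i$ as the action of the standard nil Hecke generators $x_i:=\alpha_i^{-1}(1-\sigma_i)$ of the twisted group algebra $\mathcal{Q}:=\mathrm{Frac}(R)\rtimes\mathfrak{S}_{\underline{k}}$ on the left $\mathcal{Q}$-module $\Omega_{\mathrm{loc}}$ (with $\mathfrak{S}_{\underline{k}}$ acting semilinearly and $\mathrm{Frac}(R)$ componentwise; the module axiom is a direct check from the semilinearity). The defining relations now become identities in $\mathcal{Q}$. For instance, since $\sigma_i\alpha_i^{-1}=\sigma_i(\alpha_i^{-1})\sigma_i=-\alpha_i^{-1}\sigma_i$ gives $(1-\sigma_i)\alpha_i^{-1}=\alpha_i^{-1}(1+\sigma_i)$, one obtains $x_i^2=\alpha_i^{-2}(1+\sigma_i)(1-\sigma_i)=\alpha_i^{-2}(1-\sigma_i^2)=0$, which reproves $D_i^2=0$. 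Likewise, the commutation $x_ix_j=x_jx_i$ for $|i-j|>1$ follows from $\sigma_i\sigma_j=\sigma_j\sigma_i$ together with $\sigma_j(\alpha_i)=\alpha_i$, and the braid relation $x_ix_{i+1}x_i=x_{i+1}x_ix_{i+1}$ from $\sigma_i\sigma_{i+1}\sigma_i=\sigma_{i+1}\sigma_i\sigma_{i+1}$ together with $\sigma_{i+1}(\alpha_i)=\alpha_i+\alpha_{i+1}$ and $\sigma_i(\alpha_{i+1})=\alpha_i+\alpha_{i+1}$. As these hold in $\mathcal{Q}$, they hold for the induced operators $\widetilde{D}_i$ on the module $\Omega_{\mathrm{loc}}$, and restricting to the subspace $H^\bullet_T(X)$, which each $D_i$ preserves by the cited Lemma, yields the relations for the $D_i$ themselves.

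The only genuine obstacle is the braid relation; the remaining verifications are purely formal. I expect this step to be handled by observing that it is identical to the classical type $\mathtt{A}$ computation: for $i\in[d]\setminus\underline{k'}$ the reflections $\sigma_i$ and the differences $\alpha_i=\epsilon_{i+1}-\epsilon_i$ restrict, within each multiplicity block $[k'_{j-1}+1,k'_j]$, to the simple reflections and simple roots of the symmetric group $\mathfrak{S}_{k_j}$, so the root-system identities used above are exactly the standard ones. Consequently the classical Demazure--Kostant--Kumar verification (cf.\ \cite[Proposition~2.7]{KostantKumar1986}) applies verbatim, and I would cite it rather than reproduce the computation. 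Combined with the degree bookkeeping $\mathrm{deg}(\partial_i)=-2=\mathrm{deg}(D_i)$, this shows that $\rho$ is a well-defined $\C$-linear homomorphism of $\mathbb{Z}$-graded rings.
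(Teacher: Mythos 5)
Your proposal is correct, but it verifies the nil Hecke relations by a genuinely different mechanism than the paper. The paper's proof introduces auxiliary operators $\tilde{D}_i$, given by the same formula but acting on $\bigoplus_{\sigma\in\mathfrak{S}_d}R$, observes that these coincide with the divided difference operators of \cite{Tymoczko2009}, and imports the relations \eqref{eqn:RelnsNH1}--\eqref{eqn:RelnsNH3} from \cite[Proposition~5.6]{Tymoczko2009}, leaving implicit how identities for operators indexed by $\mathfrak{S}_d$ transfer to the operators $D_i$ acting on $H_T^\bullet(X)\subseteq\bigoplus_{L\in X^T}R$. You instead prove the relations where they are needed: using the semilinearity of the action from Proposition~\ref{propn:GeometricPermutation-Action}, you turn $\bigoplus_{L\in X^T}\mathrm{Frac}(R)$ into a module over the twisted group algebra $\mathrm{Frac}(R)\rtimes\mathfrak{S}_{\underline{k}}$, identify each $D_i$ with the action of the element $x_i=(\epsilon_{i+1}-\epsilon_i)^{-1}(1-\sigma_i)$, and verify \eqref{eqn:RelnsNH1}--\eqref{eqn:RelnsNH3} as identities in that algebra (your computations of $x_i^2=0$ and of the commutation, the root identities feeding the braid relation, and the citation of \cite[Proposition~2.7]{KostantKumar1986} are all correct and consistent with the paper's own framing just before the theorem); restricting to the $D_i$-stable subspace $H_T^\bullet(X)$ --- stability being exactly the Lemma immediately preceding the definition of $D_i$, as you say --- finishes the proof. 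Your route is more self-contained and makes explicit why a classical type ${\tt A}$ computation, a priori about a different index set, applies to coordinates indexed by $X^T$: the identities hold in the twisted group algebra itself, hence in any module over it, which is precisely the transfer step the paper glosses over; the paper's route buys brevity by outsourcing everything to an existing reference. One tacit step is shared by both arguments: since ${}^0\mathcal{H}(\mathfrak{S}_{\underline{k}})$ is defined in the paper as a concrete subalgebra of $\mathrm{End}_{\mathbb{C}}(\mathbb{C}[\epsilon_i\mid i\in[d]])$ rather than by a presentation, well-definedness of $\rho$ also requires the standard fact that \eqref{eqn:RelnsNH1}--\eqref{eqn:RelnsNH3} form a complete set of defining relations (equivalently, that the operators $\partial_w$ associated to reduced words are linearly independent); you at least invoke this explicitly as a presentation, whereas the paper leaves it unstated.
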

\begin{proof}
We define $\mathbb{C}$-linear operators $\tilde{D_i}$  ($i\in[d]$) on the direct sum $\bigoplus_{\sigma\in\mathfrak{S}_d}R$, in the same way as the $D_i$'s above.
These $\tilde{D_i}$'s coincide with the divided difference operators from \cite{Tymoczko2009} and we can hence deduce the claim from the analogue claim for $\tilde{D_i}$. In particular, \cite[Proposition 5.6]{Tymoczko2009} gives us relation \eqref{eqn:RelnsNH1}, while \cite[Proposition 5.6]{Tymoczko2009} tells us that divided difference operators satisfy braid relations and hence \eqref{eqn:RelnsNH2} and \eqref{eqn:RelnsNH3} hold. 
\end{proof}
While in the the flag variety case the above representation is faithful (see \cite[Chapter 11]{Kumar2002}), this does not hold in general in our setting, as the following example shows.
\begin{ex} Let us consider the $\Delta_1$-representation $M=A_1^{\oplus 2}\oplus A_2^{\oplus 2}$ (in the notation of Example~\ref{ex:cohomology-generators-loop-quiver}). Let $X=\mr{Gr}_{1}(M)$. We have $[4] \setminus \underline{k'}=\{1,3\}$. It is easy to see that $\rho(\partial_3\partial_1)(p^L)=0$ for any $L\in X^T$, so that $\textrm{Ker}(\rho)\neq(0)$.
\end{ex}
\begin{rem}
It would be interesting to find conditions under which $\rho$ is faithful. This would provide another realisation of the nil Hecke algebra as a subalgebra of the endomorphism algebra of a cohomology ring.
\end{rem}
Differently from the flag variety case, $H_T^\bullet(X)$ is in general not a cyclic module in our setting (see Example \ref{ex:NHModNotCyclic}), while in the (partial) flag variety case every equivariant Schubert class is obtained by applying an appropriate sequence of divided difference operators to the (unique) top degree class (see \cite[Theorem 6.1]{Tymoczko2009}).
 
  
 \begin{ex}\label{ex:NHModNotCyclic}
 Consider the KT-basis from Example \ref{ex:K-T-basis-loop-quiver}. In this case we have $\underline{k}=(1,2)$ and hence $\mathfrak{S}_{\underline{k}}=\langle\sigma_2\rangle$. It is easy to see that $H_T^\bullet(X)$ is not a cyclic ${}^0\mathcal{H}(\mathfrak{S}_{\underline k})$-module. Indeed, let $a_1,a_2,a_3,a_4\in R$, and let $f=a_1p^1+a_2p^2+a_3p^3+a_4p^4$ then 
 \[
 D_2(f)=(\partial_2(a_1)+\sigma_2(a_2))p^1+\partial_2(a_2)p^2+\partial_2(a_3)p^3+\partial_2(a_4)p^4
 \]
 and one sees easily that $p^1\in {}^0\mathcal{H}(\mathfrak{S}_{\underline k})f$ if and only if $a_3,a_2,a_4\in R^{\mathfrak{S}_{\underline{k}}}$. On the other hand, since $T_2$ and multiplication by any complex number do not modify the support of $f$, $p^2\in {}^0\mathcal{H}(\mathfrak{S}_{\underline k})f$ if and only if $a_3=a_4=0$. To conclude we observe that this implies $p^3, p^4\not \in {}^0\mathcal{H}(\mathfrak{S}_{\underline k})f$.
 \end{ex}
 \begin{rem}
 We believe that it is worth it to further investigate the ${}^0\mathcal{H}(\mathfrak{S}_{\underline{k}})$-module structure  on $H_T^\bullet(X)$. For example, it would be interesting to determine conditions under which the latter is a cyclic ${}^0\mathcal{H}(\mathfrak{S}_{\underline{k}})$-module.
 \end{rem}


\end{document}